\newtheorem*{Main}{Main Theorem}
\newtheorem{thm}{Theorem}[section]
\newtheorem{lemma}[thm]{Lemma}
\newtheorem{prop}[thm]{Proposition}
\newtheorem{cor}[thm]{Corollary}
\newtheorem{defn}[thm]{Definition}
\newtheorem{examp}[thm]{Example}
\newtheorem{rmk}[thm]{Remark}
\numberwithin{equation}{section}
\title{A Formula for the S-Class Number of an Algebraic Torus}
\author{Minh-Hoang Tran}
\begin{document}
\date{}
\maketitle

\begin{abstract}
We obtain a formula for the S-class number of an algebraic torus defined over a number field in terms of the \'etale and Galois cohomology groups of its character module. As applications, we give different proofs of some classical class number formulas of  Shyr, Ono, Katayama and Morishita. 
\end{abstract}

%\tableofcontents
%\medskip
\section{Introduction}
Let $T$ be an algebraic torus defined over a number field $K$ with character group $\hat{T}$. Let $S_{\infty}$ be the set of all archimedean places of $K$ and $S$ be a finite set of places of $K$ containing $S_{\infty}$. Let $O_{K,S}$ be the ring of $S$-unit integers and $U=Spec(O_{K,S})$. We write $j:Spec(K)\to U$ for the inclusion of the generic point.

For each finite prime $v$ of $K$, let $I_v$ be the inertia group of $v$. For $n\geq 1$, the Tate-Shafarevich group $\mathbb{III}^n(T)$ is defined as the kernel of the restriction map
\[ \Delta^n: H^n(K,T)\to \prod_{\mbox{all $v$}}H^n(K_v,T).\]
We also write $[M]$ for the order of a finite group $M$. Our main result is the following formula for the $S$-class number $h_{T,S}$ of $T$.
\begin{Main}
Let $T$ be an algebraic torus over a number field $K$ with character group $\hat{T}$. Let $\Psi^n(j_{*}\hat{T})$ be the map $ H^n_{et}(U,j_{*}\hat{T})\to \prod_{v\in S}H^n(K_v,\hat{T})$. Then 
\begin{eqnarray}\label{eqn_main}
 h_{T,S} &=& 
\frac{[Ext^1_U(j_{*}\hat{T},\mathbb{G}_m)][H^1(K,\hat{T})]}
{[\mathrm{ker}\Psi^1(j_{*}\hat{T})][\mathbb{III}^1(T)]\prod_{v\in S}[H^1(K_v,{T})]\prod_{v\notin S}[H^0(\hat{\mathbb{Z}},H^1(I_v,\hat{T}))]
}.
\end{eqnarray}
\end{Main}
 The class number is an important invariant of an algebraic torus. For example, it appears in the formulas for the special values of the L-function of the torus \cite{Ono61}. 
Shyr proved a formula relating two class numbers of two isogenous tori in \cite{Shy77}. Using Shyr's result, Ono \cite{Ono87} obtained a formula for the class number of a norm torus of a Galois extension. Using a different method, Katayama \cite{Kat91} found formulas for the class numbers of a norm torus and its dual for an arbitrary extension. Morishita used the techniques from Nisnevich cohomology to generalize Ono's formula to $S$-class number \cite{Mor91}. More recently, Gonzalez-Aviles  used the Nisnevich cohomology  and the result of Xarles \cite{Xar93} on the group of components of N\'eron-Raynauld models of tori to give a generalization of Chevalley's ambiguous class number formula and the Capitulation problem \cite{GA08},\cite{GA10}. Even though the Nisnevich cohomology seems more natural for the questions concerning the class numbers of tori, we work with the \'etale cohomolgy as it is more elementary and has more machinery. The proof of our Main Theorem uses homological algebra techniques together with results in Galois and \'etale cohomology, for example the Poitou-Tate exact sequence and the Artin-Verdier Duality. As applications, we will give different proofs of the formulas of  Shyr, Ono, Katayama and Morishita mentioned above. Although the following is not discussed in this paper, we cannot resist mentioning that $\eqref{eqn_main}$ will be used in a future paper to prove the following theorem.
\begin{thm}
Let $T$ be an algebraic torus defined over a number field $K$ with character group $\hat{T}$.
Suppose $S$ is a finite set of places of $K$ containing $S_{\infty}$.
We denote $h_{T,S}$, $R_{T,S}$ and $w_T$ for the $S$-class number, the $S$-regulator and the number of roots of unity of $T$ respectively.
Let $L_S(\hat{T},s)$ be the  partial Artin L-function associated with the $G_K$-representation $\hat{T}\otimes_{\mathbb{Z}}\mathbb{C}$ modulo the local factors at $S$. 
Then $\mathrm{ord}_{s=0}L_S(\hat{T},s)=\mathrm{rank}_{\mathbb{Z}}T(O_{K,S})$ and
\begin{eqnarray}
L^{*}_S(\hat{T},0)&=&\pm \frac{h_{T,S}R_{T,S}}{w_T}\frac{[\mathbb{III}^1(T)]}{[H^{1}(K,\hat{T})]} \prod_{v\in S}{[H^1(K_v,\hat{T})]}
\prod_{v \notin S}[H^0(\hat{\mathbb{Z}},H^1(I_v,\hat{T}))].
\end{eqnarray}
\end{thm}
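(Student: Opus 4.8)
The plan is to use the Main Theorem to replace $h_{T,S}$ by étale cohomology and thereby reduce the statement to a clean ``regulator times Euler characteristic'' identity, which I would then establish by reduction to the Dedekind $S$-class number formula. Substituting $\eqref{eqn_main}$ for $h_{T,S}$ into the right-hand side of the asserted formula, the factors $[H^1(K,\hat{T})]$, $[\mathbb{III}^1(T)]$ and $\prod_{v\notin S}[H^0(\hat{\mathbb{Z}},H^1(I_v,\hat{T}))]$ cancel, while the two families of local terms combine into $\prod_{v\in S}[H^1(K_v,\hat{T})]/[H^1(K_v,T)]$. Local Tate duality for tori gives $H^1(K_v,T)\cong H^1(K_v,\hat{T})^{\vee}$ (with Tate cohomology at the archimedean places), so these orders agree and the ratio is $1$. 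Hence the theorem is equivalent, granting $\eqref{eqn_main}$, to the identity
\begin{equation*}
L^{*}_S(\hat{T},0)=\pm\,\frac{R_{T,S}}{w_T}\cdot\frac{[\mathrm{Ext}^1_U(j_{*}\hat{T},\mathbb{G}_m)]}{[\mathrm{ker}\,\Psi^1(j_{*}\hat{T})]}, \tag{$\star$}
\end{equation*}
in which $[\mathrm{Ext}^1_U(j_{*}\hat{T},\mathbb{G}_m)]$ plays the role of a generalized $S$-class number and $[\mathrm{ker}\,\Psi^1(j_{*}\hat{T})]$ the role of a correction by capitulation.

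The order-of-vanishing assertion I would treat first and separately. By Artin's induction theorem the rational representation $\hat{T}\otimes\mathbb{Q}$ is a virtual $\mathbb{Q}$-combination $\sum_i a_i\,\mathbb{Q}[G/H_i]$ of permutation modules for $G=\mathrm{Gal}(L/K)$, $L$ a splitting field, so additivity and inductivity of Artin $L$-functions give $L_S(\hat{T},s)=\prod_i\zeta_{E_i,S}(s)^{a_i}$ with $E_i=L^{H_i}$. The Dirichlet $S$-unit theorem computes $\mathrm{ord}_{s=0}\zeta_{E_i,S}$, and summing with weights $a_i$ yields $\mathrm{ord}_{s=0}L_S(\hat{T},s)=\sum_{v\in S}\dim_{\mathbb{C}}V^{D_v}-\dim_{\mathbb{C}}V^{G_K}$ with $V=\hat{T}\otimes\mathbb{C}$ and $D_v$ a decomposition group. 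Since $T(O_{K,S})=\mathrm{Hom}_{G}(\hat{T},O_{L,S}^{\times})$, the torus form of Dirichlet's theorem identifies the right-hand side with $\mathrm{rank}_{\mathbb{Z}}T(O_{K,S})$.

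To prove $(\star)$ I would argue by the same permutation-module reduction, the single base case being $\mathbb{G}_m$. When $\hat{T}=\mathbb{Z}[G/H_i]$ the torus is the Weil restriction $\mathrm{Res}_{E_i/K}\mathbb{G}_m$: one has $L_S(\hat{T},s)=\zeta_{E_i,S}(s)$, $R_{T,S}=R_{E_i,S}$, $w_T=w_{E_i}$, and by Shapiro's lemma $\mathrm{Ext}^1_U(j_{*}\hat{T},\mathbb{G}_m)=\mathrm{Pic}(O_{E_i,S})$ of order $h_{E_i,S}$ while $\mathrm{ker}\,\Psi^1=0$; thus $(\star)$ is exactly the Dedekind $S$-class number formula for $E_i$. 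For general $\hat{T}$ I would show that all four quantities entering $(\star)$ — the leading coefficient $L^{*}_S$, the regulator $R_{T,S}$, the number of roots of unity $w_T$, and the Euler characteristic $[\mathrm{Ext}^1_U(j_{*}\hat{T},\mathbb{G}_m)]/[\mathrm{ker}\,\Psi^1(j_{*}\hat{T})]$ — are multiplicative along short exact sequences of the $G$-module $\hat{T}$, up to controlled correction terms, so that the validity of $(\star)$ for permutation modules propagates to $\hat{T}$.

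I expect the main obstacle to be this last propagation, i.e. the exact Euler-characteristic bookkeeping. The functor $\mathrm{Ext}^{\bullet}_U(j_{*}(-),\mathbb{G}_m)$ is not exact, so a short exact sequence of character modules produces a long exact sequence whose $\mathrm{Ext}^0$, $\mathrm{Ext}^2$ and $\mathrm{ker}\,\Psi$ terms must be shown to cancel in the alternating product; controlling these terms is exactly where the Poitou--Tate sequence and the Artin--Verdier duality underlying $\eqref{eqn_main}$ are needed. Equally delicate is that the Brauer relation has \emph{rational} coefficients $a_i$, which forces one to work with an honest two-term permutation resolution of $\hat{T}$ rather than a virtual identity, and to match the covolume $R_{T,S}$ against the product of field regulators \emph{exactly}, the discrepancy being precisely the finite index recorded by the cohomology of $\hat{T}$. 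Verifying that this index is the one appearing in $(\star)$ — with no stray rational constant and with the sign absorbed by the functional equation — is the crux of the argument.
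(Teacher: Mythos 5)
The paper contains no proof of this theorem: it is stated in the introduction as a result to be established ``in a future paper'' using \eqref{eqn_main}, so there is no argument of the author's to compare yours against, and I can only assess your proposal on its own terms and against the strategy the paper signals. Its first two stages are correct. Substituting \eqref{eqn_main} into the asserted formula does cancel $[H^1(K,\hat{T})]$, $[\mathbb{III}^1(T)]$ and $\prod_{v\notin S}[H^0(\hat{\mathbb{Z}},H^1(I_v,\hat{T}))]$, and local Tate duality (with Tate cohomology at real places, both groups vanishing at complex places) gives $[H^1(K_v,T)]=[H^1(K_v,\hat{T})]$ for every $v$, so the theorem is indeed equivalent, granting \eqref{eqn_main}, to your identity $(\star)$. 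The order-of-vanishing argument (Artin induction, $\mathrm{ord}_{s=0}L_S(\hat{T},s)=\sum_{v\in S}\dim_{\mathbb{C}}V^{D_v}-\dim_{\mathbb{C}}V^{G_K}$, matched with $\mathrm{rank}_{\mathbb{Z}}T(O_{K,S})$ via the unit theorem for tori and Corollary \ref{cor_Hom_UT}) is standard and fine. The base case of $(\star)$ is also right: for $\hat{T}=\mathbb{Z}[G/H]$ one has $j_{*}\hat{T}=\pi'_{*}\mathbb{Z}$, hence $Ext^1_U(j_{*}\hat{T},\mathbb{G}_m)\simeq Pic(O_{E,S'})$ and $\ker\Psi^1(j_{*}\hat{T})\subseteq H^1_{et}(Spec(O_{E,S'}),\mathbb{Z})=0$, so $(\star)$ collapses to the Dedekind $S$-class number formula.

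The genuine gap is the propagation step, and it is not a deferrable technicality: it is the entire content of the theorem beyond \eqref{eqn_main}, and your proposal describes it rather than proves it. Concretely, the ratio $[Ext^1_U(j_{*}\hat{T},\mathbb{G}_m)]/[\ker\Psi^1(j_{*}\hat{T})]$ is not an Euler characteristic and has no multiplicativity in exact sequences on its own. Applying $Ext^{\bullet}_U(-,\mathbb{G}_m)$ and $H^{\bullet}_c(U,-)$ to Ono's resolution \eqref{ono_etale_seq1}, say $0\to \hat{T}^n\oplus P_1\to P_2\to N\to 0$ with $P_1,P_2$ permutation modules and $N$ finite, produces long exact sequences whose degree-$0$ and degree-$2$ terms do not cancel: $Hom_U(j_{*}\hat{T},\mathbb{G}_m)\simeq T(O_{K,S})$ is an infinite group, and comparing its covolume and torsion --- that is, $R_{T,S}$ and $w_T$ --- with the $S$-unit groups of the fields occurring in $P_1,P_2$ is precisely a Shyr-type relative formula for the isogeny dual to the inclusion $\hat{T}^n\oplus P_1\hookrightarrow P_2$. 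Once your ``controlled correction terms'' are actually written out, the plan therefore requires either the full strength of Theorem \ref{thm_shyr} together with Ono's Tamagawa-number theorem $\tau(T)=[H^1(K,\hat{T})]/[\mathbb{III}^1(T)]$ \cite{Ono63} and the analytic class number formula --- the classical Ono--Shyr route, which would prove the theorem on its own and reduce the appeal to \eqref{eqn_main} to a translation at the end --- or else a genuinely new device packaging $T(O_{K,S})$ with its regulator pairing, $Ext^1_U$ and $\ker\Psi^1$ into a single invariant that is multiplicative on the nose (a Weil-étale-type Euler characteristic), which is presumably what the author's future paper constructs and which your proposal does not. Your closing remark, that the lattice-index discrepancy ``is precisely the finite index recorded by the cohomology of $\hat{T}$,'' is the statement that has to be proved, not an observation one may take for granted; as written, the proposal stops exactly where the proof must begin.
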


This paper is organized as follows. In section 2, we recall some basic facts about algebraic tori and the Artin-Verdier Duality. In section 3, we obtain an exact sequence for Galois modules over local fields which is necessary for the proof of \eqref{eqn_main} given in section 4. The rest of the paper is for applications of  \eqref{eqn_main}. More precisely,  we give different proofs of the formulas of Ono, Katayama and Morishita in section 5 and generalize the formula of Shyr in section 6. 

This paper is part of my PhD thesis written at Brown University. I would like to express my gratitude to my advisor Professor Stephen Lichtenbaum for his guidance and encouragement. It is my pleasure to acknowledge the
support from the Deutsche Forschungsgesellschaft through the SFB 1085 Higher Invariant Research Group at University of Regensburg. Finally, I would like to thank Professor Guido Kings for his support and my friend Yigeng Zhao for many helpful conversations.    
\section{Preliminaries}
\subsection{Algebraic Tori}
Let $T$ be an algebraic torus defined over a number field $K$ with character group $\hat{T}$.  For each place $v$ of $K$, let $T_v$ be the base extension of $T$ to $K_v$ and let $\hat{T}_v$ be the character group of $T_v$. We define $T_v^c$ to be 
 \[ T_v^c:=\{x \in T_v(K_v) \text{ such that for all } \chi \in H^0(K_v,\hat{T_v}) , |\chi(x)|_{v}=1 \}. \]
Then $T_v^c$ is the unique maximal compact subgroup of $T_v(K_v)$.
If $v$ is a finite place then $T_v^c\simeq Hom_{K_v}(\hat{T_v},O_v^{*})$ $\cite[\mbox{page 115-116}]{Ono61}$. The following definitions are taken from \cite{Ono61}.
\begin{defn}
	Let $S$ be a finite set of places of $K$ which contains $S_{\infty}$. Define
	\[ T_{\mathbb{A},S}:=\prod_{v \in S} T_v(K_v) \times \prod_{v \notin S} T_v^c \qquad \& \qquad
T_{\mathbb{A}}:=\varinjlim_{\text{S finite}}T_{\mathbb{A},S} \]
\end{defn}
\begin{defn}
Let $S$ be a finite set of places of $K$ containing $S_{\infty}$. Define
\begin{enumerate}
 \item $ T(O_{K,S}):= T_{\mathbb{A},S} \cap T(K) $ - the S-units of $T$. 
% \item $ T_{\mathbb{A}}^{c}:=\prod_{v}T_{v}^{c} $ - the unit ideles of $T$. 
 \item $ Cl_{S}(T):=T_{\mathbb{A}}/T(K)T_{\mathbb{A},S} $ - the S-class group of T. 
% \item $ \mu_T:=T(K) \cap T_{\mathbb{A}}^c $ - the group of roots of unity of $T(K)$. 
\end{enumerate}
\end{defn}
Then $T(O_{K,S})$ is finitely generated and $Cl_S(T)$ is a finite abelian group. We denote $h_{T,S}$ for the order of $Cl_S(T)$, it is also called the $S$-class number of $T$. 
\begin{prop}\label{prop_unit_class_seq}
We have the exact sequences 
\begin{equation}\label{seq_unit_class_gp}
0\to T(O_{K,S}) \to T(K) \to \coprod_{v\notin S}{T(K_v)/T_v^c} \to Cl_S(T) \to 0,
\end{equation} 
\begin{equation}\label{seq_unit_class_gp_2}
0\to T(O_K) \to T(O_{K,S}) \to \prod_{v\in S-S_{\infty}}{T(K_v)/T_v^c} \to Cl(T) \to Cl_S(T) \to 0.
\end{equation} 
In particular, $T(O_{K,S})_{tor}\simeq T(O_K)_{tor}$. 
\end{prop}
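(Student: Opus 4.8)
The plan is to derive both exact sequences from the definitions of $T(O_{K,S})$ and $Cl_S(T)$ by unwinding the adelic descriptions, and then to read off the torsion statement from the first sequence. Recall that $T_{\mathbb{A}} = \varinjlim_S T_{\mathbb{A},S}$, so that an element of $T_{\mathbb{A}}$ lies in some $T_{\mathbb{A},S'}$ with $S' \supseteq S$, meaning it is integral outside a finite set of places. The key observation is that the quotient $T_{\mathbb{A}}/T_{\mathbb{A},S}$ can be identified with the restricted direct sum (coproduct) $\coprod_{v \notin S} T(K_v)/T_v^c$: at each place $v \notin S$ we mod out $T_v(K_v)$ by its maximal compact subgroup $T_v^c$, and the restricted-product condition forces all but finitely many coordinates to lie in $T_v^c$, i.e. to vanish in the quotient. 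This is exactly what the direct sum notation $\coprod$ encodes.

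For the first sequence \eqref{seq_unit_class_gp}, I would consider the composite map $T(K) \to T_{\mathbb{A}} \to T_{\mathbb{A}}/T_{\mathbb{A},S} \cong \coprod_{v \notin S} T(K_v)/T_v^c$, using the diagonal embedding of the global points. The kernel of this composite consists precisely of those global points that land in $T_{\mathbb{A},S}$, which by the definition $T(O_{K,S}) = T_{\mathbb{A},S} \cap T(K)$ is exactly $T(O_{K,S})$; this gives exactness at $T(K)$ and the injection on the left. For exactness on the right, the cokernel of $T(K) \to T_{\mathbb{A}}/T_{\mathbb{A},S}$ is $T_{\mathbb{A}}/(T(K)\, T_{\mathbb{A},S})$, which is the definition of $Cl_S(T)$. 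Thus the four-term sequence is just the long exact sequence associated with the two-step filtration $T(O_{K,S}) \subseteq T(K)$ mapping into $T_{\mathbb{A}}/T_{\mathbb{A},S}$ with cokernel $Cl_S(T)$, and checking exactness at the middle term $\coprod_{v\notin S} T(K_v)/T_v^c$ amounts to verifying that the image of $T(K)$ equals the kernel of the projection to $Cl_S(T)$, which holds by construction.

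For the second sequence \eqref{seq_unit_class_gp_2}, the strategy is to compare the $S$-level data with the $S_\infty$-level data. Here $T(O_K) = T(O_{K,S_\infty})$ and $Cl(T) = Cl_{S_\infty}(T)$. I would apply the snake lemma (or a direct diagram chase) to the natural maps between the two instances of \eqref{seq_unit_class_gp}, one for $S$ and one for $S_\infty$, using that $\coprod_{v\notin S_\infty} T(K_v)/T_v^c$ differs from $\coprod_{v \notin S} T(K_v)/T_v^c$ precisely in the finitely many places of $S - S_\infty$, whose contribution is the finite product $\prod_{v \in S - S_\infty} T(K_v)/T_v^c$. The inclusion $T_{\mathbb{A},S_\infty} \subseteq T_{\mathbb{A},S}$ induces the comparison, and tracking kernels and cokernels through the snake lemma produces the six-term sequence with $T(O_K)$, $T(O_{K,S})$, the finite product, $Cl(T)$, and $Cl_S(T)$.

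Finally, the torsion isomorphism $T(O_{K,S})_{tor} \simeq T(O_K)_{tor}$ follows from \eqref{seq_unit_class_gp_2}: the quotient $T(O_{K,S})/T(O_K)$ injects into the product $\prod_{v \in S - S_\infty} T(K_v)/T_v^c$, and for $v$ a finite place this quotient is torsion-free because $T(K_v)/T_v^c$ is a lattice (it is isomorphic to a subgroup of $\mathbb{Z}^r$ via the valuation map, since $T_v^c$ is the maximal compact and the quotient measures the non-compact directions). Hence passing to torsion subgroups kills the difference and yields the claimed isomorphism. The main obstacle I anticipate is the careful identification of $T_{\mathbb{A}}/T_{\mathbb{A},S}$ with the restricted coproduct and the verification that $T(K_v)/T_v^c$ is torsion-free at finite places; both require invoking the structure of $T_v^c$ as the maximal compact subgroup, but neither is deep once the adelic formalism is set up.
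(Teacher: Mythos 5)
Your proposal is correct and follows essentially the same route as the paper: identify $T_{\mathbb{A}}/T_{\mathbb{A},S}$ with $\coprod_{v\notin S}T(K_v)/T_v^c$ to get \eqref{seq_unit_class_gp} directly from the definitions, obtain \eqref{seq_unit_class_gp_2} by a snake-lemma comparison of the $S$ and $S_{\infty}$ cases (the paper's diagram has $T(K)\to T(K)$ on top and the short exact sequence of local quotient products on the bottom, which is the precise form of your comparison), and deduce the torsion statement from the torsion-freeness of $T(K_v)/T_v^c$ at finite places.
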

\begin{proof}
Note that $T_{\mathbb{A}}/T_{\mathbb{A},S} \simeq \coprod_{v\notin S}{T(K_v)/T_v^c}$. Then ($\ref{seq_unit_class_gp}$) follows immediately. To prove \eqref{seq_unit_class_gp_2}, we only need to apply the Snake Lemma to the following commutative diagram 
\[ \xymatrix{  0 \ar[r] & 0 \ar[r] \ar[d] & T(K) \ar[r] \ar[d] & T(K) \ar[r] \ar[d] & 0\\
            0 \ar[r] & \prod_{v\in S-S_{\infty}} \frac{T(K_v)}{T_v^{c}} \ar[r] & 
  \prod_{v\notin S_{\infty}} \frac{T(K_v)}{T_v^{c}} \ar[r] & 
 \prod_{v\notin S} \frac{T(K_v)}{T_v^{c}} \ar[r] & 0 \\}
\]
and invoke \eqref{seq_unit_class_gp} for $S$ and $S_{\infty}$. The last statement follows from \eqref{seq_unit_class_gp_2} and the fact that ${T(K_v)/T_v^c}$ is torsion free. 
\end{proof}
\subsection{Artin-Verdier Duality}
Let $K$ be a number field with Galois group $G_K$ and $X=Spec(O_K)$. Let $U$ be an open subscheme of $X$.  For any sheaf $\mathcal{F}$ on $U$, let $\mathcal{F}_v$ be the unique discrete $G_{K_v}$-module corresponding to the pull-back of $\mathcal{F}$ to $Spec(K_v)$. In \cite[\mbox{page 165}]{Mil06}, Milne  constructed the cohomology groups with compact support $H^r_c(U,\mathcal{F})$ which satisfies the following long exact sequence
\begin{equation}\label{seq_compact_coh}
... \to H^r_c(U,\mathcal{F})\to H^r_{et}(U,\mathcal{F}) \to \bigoplus_{v \notin U} H^r(K_v,\mathcal{F}_v) \to ...
\end{equation}
where $H^r(K_v,\mathcal{F}_v)$ is the usual Galois cohomology if $v$ is a finite prime and is the Tate cohomology if $v$ is an archimedean prime. 
For an abelian group $A$, let $\hat{A}$ be the profinite completion of $A$ and $A^D:=Hom_{\mathbb{Z}}(A,\mathbb{Q}/\mathbb{Z})$. The following theorem will be used many times in this paper.
\begin{thm}[Artin-Verdier Duality]\label{Artin-Verdier}
There is a canonical isomorphism $H^3_{c}(U,\mathbb{G}_m) \simeq \mathbb{Q}/\mathbb{Z}$. 
Furthermore, for any $\mathbb{Z}$-constructible sheaf $\mathcal{F}$ on $U$, the pairing 
	\[ H^{3-r}_{c}(U,\mathcal{F}) \times Ext^{r}_{U}(\mathcal{F},\mathbb{G}_m) \to H^3_{c}(U,\mathbb{G}_m) \]
	induces a map 
$\alpha^r(\mathcal{F}) : Ext^{r}_{U}(\mathcal{F},\mathbb{G}_m) \to H^{3-r}_{c}(U,\mathcal{F})^D $
which satisfies
		\begin{enumerate}
			\item For $r=0,1$, $Ext^{r}_{U}(\mathcal{F},\mathbb{G}_m)$ is finitely generated and $\alpha^r(\mathcal{F})$
			induces an isomorphism of profinite groups
\[ \hat{\alpha}^r(\mathcal{F}): \widehat{Ext^{r}_{U}(\mathcal{F},\mathbb{G}_m)} \to H^{3-r}_{c}(U,\mathcal{F})^D. \]
			\item For $r=2,3$, 
			$\alpha^r(\mathcal{F})$ is an isomorphism between groups of cofinite type.
		\end{enumerate}
Furthermore, if $\mathcal{F}$ is constructible then $\alpha^r(\mathcal{F})$ is an isomorphism of finite groups for all $r$.
	\end{thm}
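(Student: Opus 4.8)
The plan is to prove the theorem in three stages: first produce the trace isomorphism, then build the pairing abstractly, and finally reduce the general $\mathbb{Z}$-constructible case to a short list of building blocks by dévissage. The canonical isomorphism $H^3_c(U,\mathbb{G}_m)\simeq\mathbb{Q}/\mathbb{Z}$ is essentially global class field theory: specializing the defining sequence \eqref{seq_compact_coh} to $\mathcal{F}=\mathbb{G}_m$ relates $H^r_c(U,\mathbb{G}_m)$ to the global cohomology $H^r_{et}(U,\mathbb{G}_m)$ (computing the $S$-units, the Picard group, and the Brauer group) and to the local terms $\bigoplus_{v\notin U}H^r(K_v,\mathbb{G}_m)$. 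In degree $3$ the sum-of-local-invariants map of the reciprocity law identifies the relevant cokernel with $\mathbb{Q}/\mathbb{Z}$, using the fundamental sequence $0\to\mathrm{Br}(K)\to\bigoplus_v\mathrm{Br}(K_v)\to\mathbb{Q}/\mathbb{Z}\to 0$. Composing the Yoneda product $H^{3-r}_c(U,\mathcal{F})\times Ext^r_U(\mathcal{F},\mathbb{G}_m)\to H^3_c(U,\mathbb{G}_m)$ with this trace then defines $\alpha^r(\mathcal{F})$.

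Next I would reduce to building blocks. For a $\mathbb{Z}$-constructible $\mathcal{F}$ choose an open $j:V\hookrightarrow U$ on which $\mathcal{F}$ is locally constant, with closed complement $i:Z\hookrightarrow U$; the localization sequence
\[ 0\to j_!j^*\mathcal{F}\to\mathcal{F}\to i_*i^*\mathcal{F}\to 0 \]
induces compatible long exact sequences on the $Ext^\bullet_U(-,\mathbb{G}_m)$ and $H^\bullet_c(U,-)^D$ sides, linked by the maps $\alpha^\bullet$. A five-lemma argument then reduces the statement to (i) skyscraper sheaves $i_*M$ at closed points, handled by local duality at the residue field and local Galois group of $v$, and (ii) $j_!\mathcal{G}$ with $\mathcal{G}$ locally constant. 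For (ii), shrinking $V$ and using $0\to\mathcal{G}_{tors}\to\mathcal{G}\to\mathcal{G}/\mathcal{G}_{tors}\to 0$, I would reduce to the two genuinely distinct cases: $\mathcal{G}$ finite (that is, $\mathcal{F}$ constructible) and $\mathcal{G}$ of the form $\pi_*\mathbb{Z}$ for a finite map $\pi$.

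For the finite case one shows the pairing is a perfect pairing of finite groups by reducing to $\mu_n$ and $\mathbb{Z}/n\mathbb{Z}$ via the Kummer sequence and assembling the local Tate dualities at all places through the Poitou–Tate nine-term exact sequence; this yields the final assertion together with the finite-group statement for constructible $\mathcal{F}$. The $\mathbb{Z}$ case uses $Ext^r_U(\mathbb{Z},\mathbb{G}_m)\simeq H^r_{et}(U,\mathbb{G}_m)$, so the required duality is exactly the $\mathbb{G}_m$-duality already extracted from class field theory. Here the group $H^0_{et}(U,\mathbb{G}_m)=O_{K,S}^{*}$ is finitely generated of positive rank, so its dual only detects the profinite completion; this is the source of the profinite-completion statement $\hat{\alpha}^r$ in degrees $r=0,1$ and of the cofinite-type statement in degrees $r=2,3$.

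The main obstacle, I expect, is reconciling the dévissage with the completion. Since profinite completion is not exact in general, the five-lemma must be run on honest finitely generated abelian groups, where completion is exact, while separately controlling the divisible and torsion parts that govern the cofinite-type behaviour in degrees $2,3$. Keeping the finiteness, finite generation, and cofiniteness bookkeeping consistent across the localization sequence, together with the correct treatment of the archimedean Tate cohomology appearing in \eqref{seq_compact_coh}, is the delicate technical heart of the argument.
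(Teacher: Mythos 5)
The paper does not actually prove this theorem: its entire ``proof'' is the citation $\cite[\mbox{II.3.1}]{Mil06}$, so the only meaningful comparison is with Milne's argument, which the paper imports as a black box. Read against that, your proposal is not an alternative route but a faithful, compressed reconstruction of the cited proof: the trace isomorphism obtained by specializing \eqref{seq_compact_coh} to $\mathbb{G}_m$ and invoking the reciprocity sequence $0\to \mathrm{Br}(K)\to\bigoplus_v \mathrm{Br}(K_v)\to\mathbb{Q}/\mathbb{Z}\to 0$; d\'evissage through $0\to j_!j^{*}\mathcal{F}\to\mathcal{F}\to i_{*}i^{*}\mathcal{F}\to 0$ and the five lemma; skyscrapers $i_{*}M$ handled by duality for finite $\hat{\mathbb{Z}}$-modules (note the degree shift $Ext^{r}_{U}(i_{*}M,\mathbb{G}_m)\simeq Ext^{r-1}_{\hat{\mathbb{Z}}}(M,\mathbb{Z})$ coming from the sequence $0\to\mathbb{G}_m\to j_{*}\mathbb{G}_m\to i_{*}\mathbb{Z}\to 0$, which is what makes the finite-group statement come out in the right degrees); the constructible case via local Tate duality and Poitou--Tate; and the cases $\mathbb{Z}$ and $\pi_{*}\mathbb{Z}$ via the known $\mathbb{G}_m$-cohomology of $U$ and of its normalizations in finite extensions, i.e.\ class field theory again. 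Two remarks. First, your reduction of a torsion-free locally constant sheaf to $\pi_{*}\mathbb{Z}$'s plus finite sheaves is exactly Ono's resolution \eqref{ono_etale_seq1}, which this paper itself uses in the proof of Proposition \ref{prop_finiteness_jN}; quoting it would make that step concrete rather than aspirational. Second, the two difficulties you flag as the technical heart are the genuine ones and are precisely what occupies Milne: the compatibility (up to sign) of the pairings with the two localization long exact sequences, and the fact that profinite completion is exact only on sequences of finitely generated groups or strict morphisms, which forces the five-lemma to be applied integrally in degrees $2,3$ and only after completion in degrees $0,1$, with the finiteness from the skyscraper and constructible cases as input and with Tate cohomology at the real places already built into \eqref{seq_compact_coh}. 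So your outline is sound and correctly structured, but it should be understood as a sketch of the proof of the reference the paper cites, not as a self-contained argument and not as a different approach.
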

	\begin{proof}
		See $\cite[\mbox{II.3.1}]{Mil06}$.
	\end{proof}
\begin{defn}\label{defn_negligible}
We say $\mathcal{F}$ is a negligible sheaf on $U$ if $\mathcal{F}$ has finite support and its stalks are finite everywhere. Note that negligible sheaves are constructible.
\end{defn}
\begin{lemma}
Let $\mathcal{F}$ be a negligible sheaf on $U$. Then 
\begin{enumerate}
\item $H^n_c(U,\mathcal{F})\simeq H^n_{et}(U,\mathcal{F})$ for all $n$.
\item $H^n_{et}(U,\mathcal{F})=0$ for $n>1$ and $Ext^n_{U}(\mathcal{F},\mathbb{G}_m)=0$ for $n=0,1$. 
\item $[H^0_{et}(U,\mathcal{F})]=[H^1_{et}(U,\mathcal{F})]$ and $[Ext^2_{U}(\mathcal{F},\mathbb{G}_m)]=[Ext^3_{U}(\mathcal{F},\mathbb{G}_m)]$.
\end{enumerate}
\end{lemma}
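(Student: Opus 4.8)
The plan is to reduce the whole statement to the cohomology of a single skyscraper sheaf and then feed the outcome into Artin-Verdier Duality. Since $\mathcal{F}$ is negligible, its support is a finite set of closed points, so I would first decompose $\mathcal{F}$ as a finite direct sum $\bigoplus_v i_{v,*}M_v$, where $i_v:\mathrm{Spec}\,k(v)\hookrightarrow U$ is the inclusion of a closed point and each $M_v$ is a finite module over $G_{k(v)}\simeq\hat{\mathbb{Z}}$. Every group appearing in the three assertions is additive in $\mathcal{F}$, so it suffices to treat $\mathcal{F}=i_{v,*}M$ for one closed point.

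For part (1), I would apply the compact-support sequence \eqref{seq_compact_coh}. The key point is that the generic stalk of a skyscraper is zero, so the pulled-back module $\mathcal{F}_v$ vanishes for every $v\notin U$ (the pullback to $\mathrm{Spec}(K_v)$ factors through the generic point). Hence each local term $H^r(K_v,\mathcal{F}_v)$ in \eqref{seq_compact_coh} is zero, and the long exact sequence collapses to the desired isomorphisms $H^n_c(U,\mathcal{F})\simeq H^n_{et}(U,\mathcal{F})$ for all $n$. Next I would compute the \'etale cohomology directly: since $i_v$ is a closed immersion, $i_{v,*}$ is exact and $H^n_{et}(U,i_{v,*}M)\simeq H^n(\hat{\mathbb{Z}},M)$. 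Using that $\hat{\mathbb{Z}}$ has cohomological dimension one, with $H^0=\ker(\sigma-1)$ and $H^1=\mathrm{coker}(\sigma-1)$ for a topological generator $\sigma$ and $H^n=0$ for $n\geq 2$, I obtain $H^n_{et}(U,\mathcal{F})=0$ for $n>1$; and because $\sigma-1$ is an endomorphism of the finite group $M$, its kernel and cokernel have equal order, giving $[H^0_{et}(U,\mathcal{F})]=[H^1_{et}(U,\mathcal{F})]$. This settles the cohomological halves of (2) and (3).

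Finally I would transport these facts across Artin-Verdier Duality (Theorem \ref{Artin-Verdier}), which applies because $\mathcal{F}$ is constructible and yields isomorphisms of finite groups $Ext^r_U(\mathcal{F},\mathbb{G}_m)\simeq H^{3-r}_c(U,\mathcal{F})^D$ for every $r$. Combined with part (1) and the vanishing above, the cases $r=0,1$ give $Ext^r_U(\mathcal{F},\mathbb{G}_m)\simeq(H^{3-r}_{et}(U,\mathcal{F}))^D=0$, while the cases $r=2,3$ give $[Ext^2_U(\mathcal{F},\mathbb{G}_m)]=[H^1_{et}(U,\mathcal{F})]$ and $[Ext^3_U(\mathcal{F},\mathbb{G}_m)]=[H^0_{et}(U,\mathcal{F})]$, since Pontryagin duality preserves the order of a finite group. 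The equality $[H^0_{et}(U,\mathcal{F})]=[H^1_{et}(U,\mathcal{F})]$ then forces $[Ext^2_U(\mathcal{F},\mathbb{G}_m)]=[Ext^3_U(\mathcal{F},\mathbb{G}_m)]$, completing (3).

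No step here is a genuinely deep obstacle; the argument is a dévissage to skyscrapers followed by bookkeeping. The point requiring the most care is the vanishing of the local terms $H^r(K_v,\mathcal{F}_v)$ in \eqref{seq_compact_coh}, since it is exactly this input that identifies $H^n_c$ with $H^n_{et}$ and thereby lets Artin-Verdier Duality convert statements about compact-support cohomology into the stated statements about the $Ext$ groups. I would also be careful to invoke the final clause of Theorem \ref{Artin-Verdier} (isomorphism of finite groups for all $r$), which is available precisely because a negligible sheaf is constructible.
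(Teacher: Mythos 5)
Your proposal is correct and follows essentially the same route as the paper: dévissage to a skyscraper sheaf $i_*M$ at a closed point, collapse of the compact-support sequence \eqref{seq_compact_coh} because the generic stalk vanishes, computation of $H^n_{et}(U,i_*M)\simeq H^n(\hat{\mathbb{Z}},M)$, and Artin--Verdier duality for constructible sheaves to transfer the results to the $Ext$ groups. The only (cosmetic) differences are that you prove $[H^0(\hat{\mathbb{Z}},M)]=[H^1(\hat{\mathbb{Z}},M)]$ directly via $\ker(\sigma-1)$ and $\mathrm{coker}(\sigma-1)$ where the paper cites Milne, and your appeal to $\mathrm{cd}(\hat{\mathbb{Z}})=1$ for the finite (torsion) module $M$ is in fact the cleaner justification of the vanishing for $n>1$.
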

\begin{proof}
It suffices to assume $\mathcal{F}=i_{*}M$ where $i:p \to U$ is the immersion of a closed point $p$ of $U$ and $M$ is a finite discrete $\hat{\mathbb{Z}}$-module. As the generic stalk of $i_{*}M$ vanishes, $H^n_c(U,i_{*}M)\simeq H^n_{et}(U,i_{*}M)$ by ($\ref{seq_compact_coh}$).  Since $H^n_{et}(U,i_{*}M)\simeq H^n(\hat{\mathbb{Z}},M)$ and $\hat{\mathbb{Z}}$ has cohomological dimension 2, $H^n_{et}(U,i_{*}M)=0$ for $n>1$. Hence, by Theorem $\ref{Artin-Verdier}$, $Ext^n_{U}(i_{*}M,\mathbb{G}_m)=0$ for $n=0,1$. Finally, $[H^0(\hat{\mathbb{Z}},M)]=[H^1(\hat{\mathbb{Z}},M)]$ by $\cite[\mbox{page 32}]{Mil06}$. 
\end{proof}
\begin{examp}
Let $N$ be a discrete $G_K$-module, torsion free as an abelian group. Then any subsheaf of $R^1j_{*}N$ is negligible. Indeed, the generic stalk of $R^1j_{*}N$ vanishes so it only has finite support. Moreover, if $\bar{p}$ is a geometric point over a closed point $p$ of $U$ then $(R^1j_{*}N)_{\bar{p}}\simeq H^1(I_p,N)$ which is finite.
\end{examp}
\begin{prop}\label{prop_finiteness_jN}
Let $M$ be a discrete $G_K$-module, torsion free and finitely generated as an abelian group. Then 
\begin{enumerate}
\item $H^0_{et}(U,j_{*}M)$, $H^0_{c}(U,j_{*}M)$, $H^1_{c}(U,j_{*}M)$ and $Hom_U(j_{*}M,\mathbb{G}_m)$ are finitely generated.
\item $H^1_{et}(U,j_{*}M)$, $H^2_{c}(X,j_{*}M)$ and $Ext^1_U(j_{*}M,\mathbb{G}_m)$ are finite abelian groups.
\item $Ext^n_U(j_{*}M,\mathbb{G}_m)$ are of cofinite type for $n=2,3$. 
\end{enumerate}
\end{prop}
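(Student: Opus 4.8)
The plan is to combine the Artin--Verdier Duality (Theorem \ref{Artin-Verdier}), the compact-support sequence \eqref{seq_compact_coh}, the Leray spectral sequence for $j$, and the standard finiteness of low-degree Galois cohomology of a finitely generated module. Since $M$ is finitely generated, $j_{*}M$ is $\mathbb{Z}$-constructible, so Theorem \ref{Artin-Verdier} applies verbatim. Its part (2) gives immediately that $Ext^2_U(j_*M,\mathbb{G}_m)$ and $Ext^3_U(j_*M,\mathbb{G}_m)$ are of cofinite type, which is (3); its part (1) with $r=0$ gives that $Hom_U(j_*M,\mathbb{G}_m)=Ext^0_U(j_*M,\mathbb{G}_m)$ is finitely generated, one of the assertions of (1), and that $Ext^1_U(j_*M,\mathbb{G}_m)$ is at least finitely generated. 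It therefore remains to treat the cohomology groups and to upgrade $Ext^1$ from finitely generated to finite.

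For the \'etale cohomology I would use the Leray spectral sequence $H^p_{et}(U,R^qj_*M)\Rightarrow H^{p+q}(K,M)$. Adjunction gives $H^0_{et}(U,j_*M)=H^0(K,M)=M^{G_K}$, a subgroup of the finitely generated group $M$. The edge map provides an injection $H^1_{et}(U,j_*M)\hookrightarrow H^1(K,M)$, and $H^1(K,M)$ is finite: choosing a finite Galois extension $L/K$ that trivialises $M$ and applying inflation--restriction, $H^1(G_L,M)=\mathrm{Hom}_{cont}(G_L,M)=0$ since $M$ is torsion free, so $H^1(K,M)\simeq H^1(\mathrm{Gal}(L/K),M)$ is finite. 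Hence $H^1_{et}(U,j_*M)$ is finite, giving part of (2).

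Next I would feed these into \eqref{seq_compact_coh}, noting that the sum over $v\notin U$ is finite and that for a finitely generated torsion-free $M$ one has $H^0(K_v,M)=M^{G_{K_v}}$ finitely generated and $H^1(K_v,M)$ finite (the same inflation--restriction argument; the archimedean Tate groups are finite). In degree $0$ the sequence exhibits $H^0_c(U,j_*M)$ as a subgroup of $H^0_{et}(U,j_*M)$, hence finitely generated; in degree $1$ it places $H^1_c(U,j_*M)$ in an exact sequence between the finitely generated group $\mathrm{coker}\big(H^0_{et}(U,j_*M)\to\bigoplus_{v\notin U}H^0(K_v,M)\big)$ and a subgroup of the finite group $H^1_{et}(U,j_*M)$, hence finitely generated. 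This completes (1).

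The crux is the finiteness of $H^2_c(U,j_*M)$, equivalently of $Ext^1_U(j_*M,\mathbb{G}_m)$: the isomorphism $\hat{\alpha}^1$ of Theorem \ref{Artin-Verdier} gives $\widehat{Ext^1_U(j_*M,\mathbb{G}_m)}\simeq H^2_c(U,j_*M)^D$, so the finitely generated group $Ext^1_U(j_*M,\mathbb{G}_m)$ is finite precisely when $H^2_c(U,j_*M)$ is; concretely one must show $Ext^1_U(j_*M,\mathbb{G}_m)$ has rank zero. To prove $H^2_c(U,j_*M)$ finite I would continue \eqref{seq_compact_coh} into degree $2$, where the local term $\bigoplus_{v\notin U}H^1(K_v,M)$ is finite, reducing the problem to the finiteness of the kernel of $H^2_{et}(U,j_*M)\to\bigoplus_{v\notin U}H^2(K_v,M)$. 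Analysing $H^2_{et}(U,j_*M)$ through the Leray spectral sequence---whose higher terms $H^p_{et}(U,R^qj_*M)$ for $q\ge1$ are finite, since $R^qj_*M$ is supported on closed points with finite stalks $H^q(I_v,M)$ (cf. the Example)---identifies the remaining obstruction with the Tate--Shafarevich group $\mathbb{III}^2(M):=\ker\big(H^2(K,M)\to\prod_v H^2(K_v,M)\big)$. I expect this to be the main obstacle: its finiteness is a genuinely global input, which I would deduce from Poitou--Tate duality ($\mathbb{III}^2(M)$ is dual to the finite group $\mathbb{III}^1$ of the dual module), or alternatively by identifying $Ext^1_U(j_*M,\mathbb{G}_m)$ with a class group of the torus $Hom(M,\mathbb{G}_m)$ and invoking finiteness of class numbers. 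Granting this, $H^2_c(U,j_*M)$ and hence $Ext^1_U(j_*M,\mathbb{G}_m)$ are finite, completing (2).
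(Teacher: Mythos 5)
Parts (1) and (3) of your argument are sound, and your proof that $H^1_{et}(U,j_{*}M)$ is finite (Leray edge injection into $H^1(K,M)$, then inflation--restriction using torsion-freeness of $M$) is a correct alternative to what the paper does. The genuine gap is exactly at the step you call the crux: finiteness of $H^2_c(U,j_{*}M)$, equivalently of $Ext^1_U(j_{*}M,\mathbb{G}_m)$. Your reduction identifies the obstruction $\mathcal{K}=\ker\bigl(H^2_{et}(U,j_{*}M)\to\bigoplus_{v\in S}H^2(K_v,M)\bigr)$ with $\mathbb{III}^2(M)$, and this identification is false. A class in $H^2_{et}(U,j_{*}M)$, restricted to $K_v$ for $v\notin S$, lands in the \emph{unramified} part of $H^2(K_v,M)$ (the image of $H^2(\hat{\mathbb{Z}},M^{I_v})$), which is in general infinite and nonzero --- it does not vanish. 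So $\mathcal{K}$ surjects (with finite kernel, controlled by the negligible sheaf $R^1j_{*}M$) onto a Selmer-type group of global classes that are trivial at $S$ and merely unramified outside $S$; this group strictly contains the relevant part of $\mathbb{III}^2(M)$, and its finiteness does not follow from finiteness of $\mathbb{III}^2(M)$. The case $M=\mathbb{Z}$ makes this concrete: $\mathbb{III}^2(\mathbb{Z})=0$ (a character of $G_K$ trivial on every decomposition group is trivial, by Chebotarev), yet $\mathcal{K}\simeq H^2_c(U,\mathbb{Z})\simeq Pic(O_{K,S})^D$ and $Ext^1_U(\mathbb{Z},\mathbb{G}_m)\simeq Pic(O_{K,S})$. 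Thus the finiteness you need is precisely class-number finiteness, and no amount of Poitou--Tate duality for $M$ can supply it. (A secondary error points the same way: $H^p_{et}(U,R^qj_{*}M)$ is \emph{not} finite for $q=2$; e.g.\ $H^2(I_v,\mathbb{Z})\simeq (O_v^{*})^D$ is infinite and nonzero at every $v$, so $R^2j_{*}\mathbb{Z}$ is not even constructible --- only $R^1j_{*}M$ is negligible.)

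The paper closes this gap by a different device: Ono's d\'evissage $\cite[\mbox{1.5.1}]{Ono61}$, which gives an exact sequence $0\to M^{n}\oplus \prod_{\mu}(\pi_{\mu})_{*}\mathbb{Z} \to \prod_{\lambda}(\pi_{\lambda})_{*}\mathbb{Z} \to N \to 0$ with $N$ finite, hence after applying $j_{*}$ a sequence of sheaves on $U$ relating $(j_{*}M)^n$ to permutation sheaves $(\pi'_{\lambda})_{*}\mathbb{Z}$ with constructible cokernel. Since $Ext^{r}_U((\pi'_{\lambda})_{*}\mathbb{Z},\mathbb{G}_m)\simeq H^{r}_{et}(V_{\lambda},\mathbb{G}_m)$, the long exact Ext-sequence reduces all three statements to the Dirichlet unit theorem ($r=0$) and the finiteness of $Pic(O_{K_{\lambda},S'})$ ($r=1$): the class-number input enters explicitly there, and finiteness of $H^2_c(U,j_{*}M)$ is then read off from Artin--Verdier duality rather than the other way around. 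Your fallback suggestion --- identify $Ext^1_U(j_{*}M,\mathbb{G}_m)$ with a class group of the torus $Hom(M,\mathbb{G}_m)$ and cite finiteness of class numbers of tori --- has the right intuition, but making that identification precise is essentially the content of the paper's Main Theorem in Section 4, whose proof uses the finiteness assertions of this very proposition (e.g.\ finiteness of $\mathrm{cok}(\Theta^0_S)$ and $\ker(\Theta^1_S)$ is deduced from finiteness of $Ext^1_U(j_{*}\hat{T},\mathbb{G}_m)$); within this paper that route is circular. To repair your proof, replace the $\mathbb{III}^2$ step by the d\'evissage argument, or by any direct reduction to $Pic$ and units of rings of $S$-integers in finite extensions of $K$.
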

\begin{proof}
By $\cite[\mbox{1.5.1}]{Ono61}$, there exist finitely many Galois extensions $\{K_{\mu}\}_{\mu}$, $\{K_{\lambda}\}_{\lambda}$ of $K$, a positive integers $n$,
and a finite $G_K$-module $N$ such that we have the exact sequence 
\begin{equation}\label{ono_etale_seq1}
0 \to M^{n}\oplus \prod_{\mu}(\pi_{\mu})_{*}\mathbb{Z} \to \prod_{\lambda}(\pi_{\lambda})_{*}\mathbb{Z} \to N \to 0,
\end{equation}
where $\pi_{\mu}$ is the map $ Spec(K_{\mu})\to 
Spec(K)$. Let $V_{\mu}$ be the normalization of $U$ in $K_{\mu}$ and $\pi_{\mu}'$ be the map $V_{\mu} \to U$. Applying $j_{*}$ to ($\ref{ono_etale_seq1}$) we obtain 
\begin{equation}\label{ono_etale_seq2}
0 \to (j_{*}M)^{n}\oplus \prod_{\mu}(\pi_{\mu}')_{*}\mathbb{Z} \to \prod_{\lambda}(\pi_{\lambda}')_{*}\mathbb{Z} \to \mathcal{Q} \to 0
\end{equation}
where $\mathcal{Q}$ is a constructible sheaf. From $\cite[\mbox{II.2.1}]{Mil06}$, 
$Ext^{n}_U((\pi_{\mu}')_{*}\mathbb{Z},\mathbb{G}_m)\simeq H^{n}_{et}(V_{\mu},\mathbb{G}_m)$ is finitely generated, finite, cofinite type for $n=0,1$ and $2,3$ respectively. Therefore, by the long exact sequence of the $Ext$-groups associated to ($\ref{ono_etale_seq2}$), we obtain the similar statement for $Ext^n_U(j_{*}M,\mathbb{G}_m)$. As $Ext^1_U(j_{*}M,\mathbb{G}_m)$ is finite, by Artin-Verdier duality, $H^2_c(U,j_{*}M)$ is also finite. 

As $H^0_{et}(U,j_{*}M)\simeq H^0(K,M)$, it is finitely generated. By the long exact sequence of $H^n_{et}(U,.)$ associated with ($\ref{ono_etale_seq2}$) and the fact that $H^1_{et}(U,\mathbb{Z})=0$, we deduce that $H^1_{et}(U,j_{*}M)$ is finite. Finally, we have the exact sequence from ($\ref{seq_compact_coh}$)
\[ 0 \to \prod_{v\in S}H^{-1}(K_v,M) \to H^0_c(U,j_{*}M) \to H^0(K,M) 
\to \prod_{v\in S}H^{0}(K_v,M) \to H^1_c(U,j_{*}M) \to H^1_{et}(U,j_{*}M) 
\]
Since $\prod_{v\in S}H^{n}(K_v,M)$ is finitely generated for all $n$ (note that for $v\in S_{\infty}, H^n(K_v,M)$ means the Tate cohomology group), $H^n_c(U,j_{*}M)$ is finitely generated for $n=0,1$.
\end{proof}

\section{Local Galois Cohomology}
Let $K$ be a number field with Galois group $G_K$. 
Let $U$ be an open subscheme of $Spec(O_K)$ and $j:Spec(K)\to U$.
For each finite place $v$ of $K$, we write $K_v^{ur}$ for the maximal unramified extension of the completion $K_v$ of $K$. We denote by $I_v=G(\bar{K}_v/K_v^{ur})$ the inertia group of $v$. Let $O_{v}$, $O_{v}^{ur}$ and $\bar{O}_{v}$ be the valuation rings of $K_v$, $K_v^{ur}$ and $\bar{K}_v$ respectively. For the rest of this paper, by a discrete $G_K$-module, we mean a finitely generated abelian group with continuous $G_K$-action.
\begin{lemma}\label{lemma_localext}
Let $\mathcal{F}$ be a sheaf on $U$ and $\mathcal{F}_K$ be the $G_K$-module corresponding to the sheaf $j^{*}\mathcal{F}$ on $Spec(K)$. Let $\hat{\mathcal{F}}_K=Hom_{\mathbb{Z}}(\mathcal{F}_K,\bar{K}^{*})$. Then 
\begin{enumerate}
\item $Ext_U^{n}(\mathcal{F},j_{*}\mathbb{G}_m) \simeq H^n(K,\hat{\mathcal{F}}_K)$. 
In particular,
$ Ext_U^{n}(j_{*}\hat{T},j_{*}\mathbb{G}_m) \simeq H^{n}(K,T)$.
\item Let $i:v\to U$ be a closed immersion. Then $Ext_U^{n}(\mathcal{F},i_{*}\mathbb{Z}) \simeq Ext^{n}_{\hat{\mathbb{Z}}}(\mathcal{F}_K^{I_v},\mathbb{Z})$. In particular, 
$Ext_U^{n}(j_{*}\hat{T},i_{*}\mathbb{Z}) \simeq Ext^{n}_{\hat{\mathbb{Z}}}(\hat{T}^{I_v},\mathbb{Z})$.
\end{enumerate}
\end{lemma}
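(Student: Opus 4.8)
The plan is to prove both isomorphisms by the same two-step strategy: first use an adjunction between a pullback and a pushforward (along $j$ for part (1), along $i$ for part (2)) to rewrite the $U$-$\mathrm{Ext}$ groups as $\mathrm{Ext}$ groups over the generic point, respectively the closed point; then collapse a universal-coefficients spectral sequence to land on ordinary Galois cohomology, respectively on $\hat{\mathbb Z}$-$\mathrm{Ext}$. The common mechanism is that $j^{*}$ and $i^{*}$ are exact, so their right adjoints $j_{*}$ and $i_{*}$ preserve injective sheaves, and this is exactly what allows the adjunction to pass to derived functors.

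For part (1) the key geometric input is the vanishing $R^{q}j_{*}\mathbb G_m=0$ for $q\ge 1$. I would verify this on stalks: the generic stalk is the cohomology of a field and vanishes in positive degree, while the stalk at a closed point $v$ is $H^{q}(I_v,\bar K_v^{*})$, which is $0$ for $q=1$ by Hilbert's Theorem 90 and for $q\ge 2$ by the triviality of the Brauer group of the maximal unramified extension $K_v^{ur}$ (whose residue field is algebraically closed). Granting this, $Rj_{*}\mathbb G_m=j_{*}\mathbb G_m$, so the derived adjunction $R\mathrm{Hom}_U(\mathcal F,Rj_{*}\mathbb G_m)\simeq R\mathrm{Hom}_{K}(j^{*}\mathcal F,\mathbb G_m)$ yields $\mathrm{Ext}_U^{n}(\mathcal F,j_{*}\mathbb G_m)\simeq \mathrm{Ext}^{n}_{G_K}(\mathcal F_K,\bar K^{*})$. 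Finally, since $\bar K^{*}$ is divisible and hence injective as an abelian group, $\mathrm{Ext}^{q}_{\mathbb Z}(\mathcal F_K,\bar K^{*})=0$ for $q>0$, so the spectral sequence $H^{p}(K,\mathrm{Ext}^{q}_{\mathbb Z}(\mathcal F_K,\bar K^{*}))\Rightarrow \mathrm{Ext}^{p+q}_{G_K}(\mathcal F_K,\bar K^{*})$ degenerates to $H^{n}(K,\mathrm{Hom}_{\mathbb Z}(\mathcal F_K,\bar K^{*}))=H^{n}(K,\hat{\mathcal F}_K)$. For the displayed special case I take $\mathcal F=j_{*}\hat T$, use $j^{*}j_{*}\hat T=\hat T$, and identify $\mathrm{Hom}_{\mathbb Z}(\hat T,\bar K^{*})=T(\bar K)$ as $G_K$-modules, so the answer becomes $H^{n}(K,T)$.

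For part (2), the closed immersion $i$ makes $i_{*}$ \emph{exact}, so an injective resolution $\mathbb Z\to J^{\bullet}$ of $\mathbb Z$ as a $\hat{\mathbb Z}$-module pushes forward to an injective resolution $i_{*}\mathbb Z\to i_{*}J^{\bullet}$ on $U$. Applying the adjunction $\mathrm{Hom}_U(\mathcal F,i_{*}(-))\simeq \mathrm{Hom}_{\hat{\mathbb Z}}(i^{*}\mathcal F,-)$ term by term and passing to cohomology then gives $\mathrm{Ext}_U^{n}(\mathcal F,i_{*}\mathbb Z)\simeq \mathrm{Ext}^{n}_{\hat{\mathbb Z}}(i^{*}\mathcal F,\mathbb Z)$ directly, with no higher-direct-image correction (this is why I route the computation through an injective resolution of the target rather than a projective resolution of the source). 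It then remains to identify the stalk $i^{*}\mathcal F$ with the inertia invariants $\mathcal F_K^{I_v}$; this is precisely the standard stalk formula for a pushforward from the generic point, $(j_{*}M)_{\bar v}\simeq M^{I_v}$, which in particular gives $i^{*}(j_{*}\hat T)\simeq \hat T^{I_v}$ and hence the displayed special case.

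The main obstacle is concentrated in part (1): everything downstream is formal adjunction plus a degenerate spectral sequence, but the reduction $Rj_{*}\mathbb G_m=j_{*}\mathbb G_m$ genuinely requires the positive-degree local computation, and the least routine piece is the vanishing of the Brauer group of $K_v^{ur}$ that kills $R^{2}j_{*}\mathbb G_m$. In part (2) the only delicate point is the stalk identification $i^{*}\mathcal F\simeq\mathcal F_K^{I_v}$: it is immediate for the pushforward sheaves $\mathcal F=j_{*}M$ that occur in the applications, and this is the case actually needed, whereas for a completely arbitrary $\mathcal F$ the clean statement should be phrased in terms of the genuine étale stalk $i^{*}\mathcal F$ rather than $\mathcal F_K^{I_v}$.
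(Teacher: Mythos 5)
Your proposal is correct and takes essentially the same route as the paper: part (1) via the vanishing $R^{q}j_{*}\mathbb{G}_m=0$ (which the paper simply cites from Milne II.1.4) followed by the Leray-type Ext spectral sequence and the divisibility of $\bar{K}^{*}$, and part (2) via exactness of $i_{*}$ plus adjunction --- your injective-resolution argument is precisely the paper's degenerate spectral sequence $Ext^n_U(\mathcal{F},R^m i_{*}\mathbb{Z})\Rightarrow Ext^{n+m}_{\hat{\mathbb{Z}}}(i^{*}\mathcal{F},\mathbb{Z})$ unwound. Your closing caveat is also well taken: for an arbitrary sheaf $\mathcal{F}$ the conclusion of part (2) should be stated with the stalk $i^{*}\mathcal{F}$ rather than $\mathcal{F}_K^{I_v}$ (the paper silently identifies the two, which is valid for the sheaves $j_{*}M$ actually used in the applications).
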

\begin{proof}
\begin{enumerate}
\item From $\cite[\mbox{II.1.4}]{Mil06}$, $R^qj_{*}\mathbb{G}_m=0$ for $q>0$. Thus the spectral sequence 
\[ Ext^p_U(\mathcal{F},R^qj_{*}\mathbb{G}_m) \Rightarrow Ext^{p+q}_{G_K}(\mathcal{F}_K,\bar{K}^{*})\] 
collapses and yields $ Ext_U^{p}(\mathcal{F},j_{*}\mathbb{G}_m) \simeq Ext^{p}_{G_K}(\mathcal{F}_K,\bar{K}^{*})$. 
From $\cite[\mbox{I.0.8}]{Mil06}$, there is a spectral sequence 
\[ H^p(G_K,Ext^q_{\mathbb{Z}}(\mathcal{F}_K,\bar{K}^{*})) \Rightarrow Ext^{p+q}_{G_K}(\mathcal{F}_K,\bar{K}^{*}). \]
Since $\bar{K}^{*}$ is divisible, $Ext^q_{\mathbb{Z}}(N,\bar{K}^{*})=0$ for $q>0$.
Thus,
$H^p(G_K,Hom_{\mathbb{Z}}(\mathcal{F}_K,\bar{K}^{*}))\simeq Ext^{p}_{G_K}(\mathcal{F}_K,\bar{K}^{*})$. 
Hence, $Ext_U^{n}(\mathcal{F}_K,j_{*}\mathbb{G}_m)\simeq H^n(K,\hat{\mathcal{F}}_K)$.
Finally, if $\mathcal{F}=j_{*}\hat{T}$ then $\hat{\mathcal{F}}_K=Hom_{\mathbb{Z}}(\hat{T},\bar{K}^{*}) \simeq T(\bar{K})$. Therefore, $ Ext_U^{n}(j_{*}\hat{T},j_{*}\mathbb{G}_m) \simeq H^{n}(K,T)$.

\item  Since $i_{*}$ is exact, the spectral sequence 
\[ Ext^n_U(\mathcal{F},R^mi_{*}\mathbb{Z}) \Rightarrow 
Ext^{n+m}_{v}(i^{*}\mathcal{F},\mathbb{Z})\simeq Ext^{n+m}_{\hat{\mathbb{Z}}}(\mathcal{F}_K^{I_v},\mathbb{Z}) \]
implies that
$Ext_U^{n}(\mathcal{F},i_{*}\mathbb{Z}) \simeq Ext^{n}_{\hat{\mathbb{Z}}}(\mathcal{F}_K^{I_v},\mathbb{Z})$.
\end{enumerate}
\end{proof}

\begin{prop}\label{local_duality_fg}
Let $N$ be a discrete $G_{K_v}$-module. Let $\hat{N}=Hom_{\mathbb{Z}}(N,\bar{K}_{v}^{*})$ and $\hat{N}^c=Hom_{\mathbb{Z}}(N,\bar{O}_{v}^{*})$. Then
\begin{enumerate}
\item $ H^0(K_v,\hat{N}^c) = 
\{ f \in Hom_{G_{K_v}}(N,\bar{K}_v^{*}) : \text{ for all } x \in H^0(K_v,N) \text{, we have } f(x) \in O_v^{*} \} $.
\item $ Ext^{2}_{\hat{\mathbb{Z}}}(N^{I_v},\mathbb{Z}) \simeq H^2(K_v,\hat{N})$.
\item The following sequence is exact
\begin{multline}\label{local_duality_sq}
0 \to H^0(K_v,\hat{N}^c) \to H^0(K_v,\hat{N}) \to Hom_{\hat{\mathbb{Z}}}(N^{I_v},\mathbb{Z}) \to
 H^0(\hat{\mathbb{Z}},H^1(I_v,N))^{D} \to \\
 \to H^1(K_v,\hat{N}) \to Ext^{1}_{\hat{\mathbb{Z}}}(N^{I_v},\mathbb{Z}) \to 0.
\end{multline}
\end{enumerate} 
\end{prop}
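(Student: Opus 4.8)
The plan is to dispose of (1) by a compactness/averaging argument and to obtain (2) and (3) together from a single long exact sequence of $Ext$-groups over the local trait, in which Lemma~\ref{lemma_localext} identifies all but one family of terms.

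\emph{Part (1).} The only issue is to show that a $G_{K_v}$-equivariant homomorphism $f:N\to\bar K_v^{*}$ with $f(x)\in O_v^{*}$ for every $x\in H^0(K_v,N)$ automatically has $f(N)\subseteq\bar O_v^{*}$; the reverse inclusion is immediate from the definition of $\hat N^c$. I would compose $f$ with the Galois-invariant valuation $w:\bar K_v^{*}\to\mathbb Q$, obtaining $w\circ f\in Hom_{G_{K_v}}(N,\mathbb Q)$. Since the action on $N$ factors through a finite quotient modulo torsion, averaging over that quotient shows that any invariant functional $N\to\mathbb Q$ is determined by its restriction to $(N\otimes\mathbb Q)^{G_{K_v}}=H^0(K_v,N)\otimes\mathbb Q$; as $w\circ f$ vanishes there by hypothesis, it vanishes on all of $N$, i.e. $f(N)\subseteq\bar O_v^{*}$.

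\emph{Parts (2) and (3).} I would work over $Y=Spec(O_v)$ with $j:Spec(K_v)\to Y$ and closed point $i:v\to Y$, and apply $Ext^{\bullet}_Y(j_{*}N,-)$ to the divisor sequence $0\to\mathbb G_m\to j_{*}\mathbb G_m\to i_{*}\mathbb Z\to0$. The local form of Lemma~\ref{lemma_localext} gives $Ext^n_Y(j_{*}N,j_{*}\mathbb G_m)\cong H^n(K_v,\hat N)$ and $Ext^n_Y(j_{*}N,i_{*}\mathbb Z)\cong Ext^n_{\hat{\mathbb Z}}(N^{I_v},\mathbb Z)$, so, writing $A^n:=Ext^n_Y(j_{*}N,\mathbb G_m)$, the associated long exact sequence
\[ \cdots\to A^n\to H^n(K_v,\hat N)\xrightarrow{\gamma^n}Ext^n_{\hat{\mathbb Z}}(N^{I_v},\mathbb Z)\to A^{n+1}\to\cdots \]
reduces both parts to computing the groups $A^n$. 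The term $A^0=Hom_Y(j_{*}N,\mathbb G_m)$ consists of those $G_{K_v}$-maps $f:N\to\bar K_v^{*}$ whose image under $\gamma^0$, the functional $x\mapsto \mathrm{ord}_v(f(x))$ on $N^{I_v}$ (well defined since $f(x)\in K_v^{ur,*}$ for $x\in N^{I_v}$), vanishes; because $N^{G_{K_v}}\subseteq N^{I_v}$, part (1) identifies this kernel with $H^0(K_v,\hat N^c)$. Granting the remaining values $A^1\cong H^0(\hat{\mathbb Z},H^1(I_v,N))^{D}$ and $A^n=0$ for $n\ge2$, the degrees $\le1$ of the sequence are precisely \eqref{local_duality_sq} (the final surjection onto $Ext^1_{\hat{\mathbb Z}}(N^{I_v},\mathbb Z)$ coming from $A^2=0$), and the degree-$2$ segment yields the isomorphism $H^2(K_v,\hat N)\cong Ext^2_{\hat{\mathbb Z}}(N^{I_v},\mathbb Z)$ of (2) because $A^2=A^3=0$.

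\emph{Computing $A^n$; the main obstacle.} The hardest step is the evaluation of $A^n=Ext^n_Y(j_{*}N,\mathbb G_m)$ for $n\ge1$, which is a genuine local duality statement rather than a formal consequence of Lemma~\ref{lemma_localext}. I would feed the local-to-global spectral sequence $H^p_{et}(Y,\mathcal{E}xt^q_Y(j_{*}N,\mathbb G_m))\Rightarrow A^{p+q}$: the stalk of $\mathcal{E}xt^q$ at the closed geometric point is the analogous $Ext$ over the strictly henselian trait, whose generic Galois group is the inertia $I_v$, and this is the mechanism by which $H^1(I_v,N)$ enters. Using that $Y$ is henselian to replace $H^p_{et}(Y,-)$ by cohomology of $\hat{\mathbb Z}=Gal(\bar k_v/k_v)$, the vanishing $A^{\ge2}=0$ should reduce to facts of the shape $Br(O_v)=0$ together with the cohomological dimension of $\hat{\mathbb Z}$, while the identification $A^1\cong H^0(\hat{\mathbb Z},H^1(I_v,N))^{D}$ combines Tate local duality with the finite-field duality $H^1(\hat{\mathbb Z},M)\cong H^0(\hat{\mathbb Z},M^{\vee})^{D}$ for finite $M$. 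I expect this last duality to be the crux; a guiding model is $N=\mathbb Z/m$ with $m$ prime to the residue characteristic, where $A^1=O_v^{*}/(O_v^{*})^m$ is cyclic of order $\gcd(m,q-1)$ (with $q=\#k_v$), matching the Frobenius-invariants of $H^1(I_v,\mathbb Z/m)=\mathbb Z/m(-1)$. The same two dualities also give an independent proof of (2): both sides are canonically $(N^{G_{K_v}})^{D}$, via $H^2(K_v,\hat N)\cong H^0(K_v,N)^{D}$ (Tate) and $Ext^2_{\hat{\mathbb Z}}(N^{I_v},\mathbb Z)\cong((N^{I_v})^{\hat{\mathbb Z}})^{D}$ (finite-field duality).
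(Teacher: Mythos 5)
Your part (1) is correct, and it is essentially the paper's own argument in valuation-theoretic clothing: the paper composes with the norm via $N_{L_v/K_v}(f(x))=f(Tr_{L_v/K_v}(x))$ and uses that an element whose norm is a unit is a unit, which is exactly your averaging of $w\circ f$ over a finite quotient. Likewise your closing remark on part (2) — that both sides are canonically $H^0(K_v,N)^D$, via Tate local duality and the duality $Ext^{r}_{\hat{\mathbb{Z}}}(M,\mathbb{Z})\simeq H^{2-r}(\hat{\mathbb{Z}},M)^D$ — is precisely the paper's proof of (2), so that part stands.

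The genuine gap is in part (3), and it sits exactly where you wrote ``granting.'' Your skeleton — apply $Ext^{\bullet}_Y(j_{*}N,-)$ to $0\to\mathbb{G}_m\to j_{*}\mathbb{G}_m\to i_{*}\mathbb{Z}\to 0$ over the trait $Y=Spec(O_v)$ and identify the outer terms by the local analogue of Lemma 3.1 — is sound (it is the same mechanism the paper uses \emph{globally} in the proof of the Main Theorem), but with this skeleton the entire content of the proposition is concentrated in the values $A^1\simeq H^0(\hat{\mathbb{Z}},H^1(I_v,N))^{D}$ and $A^n=0$ for $n\geq 2$, and these you do not prove. They are not formal consequences of $Br(O_v)=0$, the cohomological dimension of $\hat{\mathbb{Z}}$, Tate duality, and duality over $\hat{\mathbb{Z}}$; together they \emph{are} the local duality theorem for the henselian trait (Milne \cite{Mil06}, II.1). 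Your proposed route via $H^p_{et}(Y,\mathcal{E}xt^q_Y(j_{*}N,\mathbb{G}_m))\Rightarrow A^{p+q}$ requires computing the stalks of the Ext-sheaves at the closed geometric point, i.e.\ an Ext-computation over the strictly henselian trait in which the duality involving $H^1(I_v,N)$ must be produced from scratch — a statement of the same depth as the one being proved (and it is not automatic that Ext-sheaf stalks can be computed on the strict henselization for $\mathbb{Z}$-constructible coefficients). There is also a low-degree subtlety your sketch ignores: duality statements of this kind for $\mathbb{Z}$-constructible sheaves hold only after profinite completion in degrees $\leq 1$, so identifying $A^1$ itself (not $\widehat{A^1}$) with the finite group $H^0(\hat{\mathbb{Z}},H^1(I_v,N))^{D}$ needs a separate finiteness argument. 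As written, the proposal therefore reduces the proposition to an unproved assertion at least as hard. The paper avoids all of this: it takes the five-term exact sequence of the Hochschild--Serre spectral sequence $H^r(\hat{\mathbb{Z}},H^s(I_v,N))\Rightarrow H^{r+s}(K_v,N)$, dualizes it using Tate local duality and Milne I.1.10 to get the sequence after profinite completion, and then removes the completion on the $H^0$-part by showing that the valuation map $\Psi$ has kernel $H^0(K_v,\hat{N}^c)$ (your part (1)), that the morphisms are strict so completion is exact, that $H^0(K_v,\hat{N}^c)$ is already profinite, and that the finite cokernel equals its completion. If you want to keep your approach, the honest fix is to cite or reprove Milne's local duality for the trait and deduce $A^n$ from the local cohomology groups $H^{3-n}_x(Y,j_{*}N)$; otherwise the paper's purely Galois-cohomological route is the shorter path.
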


\begin{proof}
\begin{enumerate}
\item Let $f \in H^0(K_v,\hat{N}^c)$. For any $x \in H^0(K_v,N)$, $f(x) \in \bar{O}_{v}^{*}$ by definition. As $f$ is $G_{K_v}$-invariant, $f(x) \in O_v^{*}$. Thus, $H^0(K_v,\hat{N}^c)$ is a subset of the right hand side.

Conversely, let $f$ be an element of the right hand side. Let $L_v$ be a finite Galois extension of $K_v$ such that the Galois group $G_{L_v}$ acts trivially on $N$. For $x\in N$, $f(x) \in L_v^{*}$ as $N=H^0(L_v,N)$. We have $ N_{L_v/K_v}(f(x))=f(Tr_{L_v/K_v}(x))$.
As $Tr_{L_v/K_v}(x) \in H^0(K_v,N)$, $f(Tr_{L_v/K_v}(x)) \in O_v^{*}$. 
Hence, $N_{L_v/K_v}(f(x)) \in O_v^{*}$. We deduce that $f(x)\in O_{L_v}^{*} \subset \bar{O}_v^{*}$. As a result, the right hand side is a subset of $ H^0(K_v,\hat{N}^c)$.

\item By $\cite[\mbox{I.1.10}]{Mil06}$,
$Ext^{2}_{\hat{\mathbb{Z}}}(N^{I_v},\mathbb{Z}) \simeq H^0(\hat{\mathbb{Z}},N^{I_v})^D\simeq H^0(K_v,N)^D$. From $\cite[\mbox{I.2.1}]{Mil06}$, we have $H^2(K_v,\hat{N}) \simeq H^0(K_v,N)^D $. 
Thus, $H^2(K_v,\hat{N}) \simeq Ext^{2}_{\hat{\mathbb{Z}}}(N^{I_v},\mathbb{Z}) $.

\item From the spectral sequence $ H^r(\hat{\mathbb{Z}},H^s(I_v,N))\Rightarrow H^{r+s}(K_v,N)$, 
we obtain
\[0 \to H^1(\hat{\mathbb{Z}},N^{I_v}) \to H^{1}(K_v,N) \to H^0(\hat{\mathbb{Z}},H^1(I_v,N)) \to H^2(\hat{\mathbb{Z}},N^{I_v}) 
\to H^{2}(K_v,N)\]
Taking Pontryagin dual and use Tate's local duality Theorem, we have
\begin{equation}\label{local_duality_sequence1}
 \widehat{H^0(K_v,\hat{N})} \xrightarrow{\hat{\Psi}} \widehat{Hom_{\hat{\mathbb{Z}}}(N^{I_v},\mathbb{Z})} \to H^0(\hat{\mathbb{Z}},H^1(I_v,N))^D 
\to H^1(K_v,\hat{N}) \to Ext^{1}_{\hat{\mathbb{Z}}}(N^{I_v},\mathbb{Z}) \to 0. 
\end{equation} 

Let $W=\mathrm{cok}\hat{\Psi}$. As $H^0(\hat{\mathbb{Z}},H^1(I_v,N))$ is finite, so is $W$. To complete the proof, we shall show the following sequence is exact.
\[ 0 \to H^0(K_v,\hat{N}^c)  \to H^0(K_v,\hat{N}) \xrightarrow{\Psi} Hom_{\hat{\mathbb{Z}}}(N^{I_v},\mathbb{Z}) 
\to W \to 0.\]
The map $\Psi$ is defined as follows : for $f \in H^0(K_v,\hat{N})$ and $x \in N^{I_v}$, $\Psi(f)(x):=v(f(x))$ where $v$ is the normalized valuation of $K_v$.
Then $\Psi$ is a continuous map and 
\[ \ker\Psi=\{f \in Hom_{G_{K_v}}(N,\bar{K}_v^{*}) : \text{ for all } x \in N^{I_v} \text{, we have } f(x) \in (O_v^{ur})^{*}\}. \]

\textbf{Claim :}  $\ker \Psi = H^0(K_v,\hat{N}^c)$.

\textbf{Proof of claim :}
\begin{itemize}
\item Let $f\in H^0(K_v,\hat{N}^c)$ and $x\in N^{I_v}$. Then 
$f(x) \in H^0(I_v,\bar{K}^{*})\cap \bar{O_v}^{*}=(O_v^{ur})^{*}$.
Therefore, $H^0(K_v,\hat{N}^c) \subset \ker \Psi$.
\item To prove the other inclusion, we use the description of $H^0(K_v,\hat{N}^c)$ from part 1. 
Let $f \in \ker \Psi$ and $x \in H^0(K,N)$. Then $f(x)\in (O_v^{ur})^*$ by definition. Since $f(x)\in K_v^{*}$, $f(x) \in (O_v^{ur})^* \cap K_v^{*}=O_v^{*}$. Hence, $\ker \Psi \subset H^0(K_v,\hat{N}^c)$.
\end{itemize}

\end{enumerate}
Let $W'=\mathrm{cok}(\Psi)$. We have the following exact sequence where all the maps are strict morphisms 
$\cite[\mathrm{page 13}]{Mil06}$
\[ 0 \to H^0(K_v,\hat{N}^c)  \to H^0(K_v,\hat{N}) \xrightarrow{\Psi} Hom_{\hat{\mathbb{Z}}}(N^{I_v},\mathbb{Z}) \to W' \to 0. \]
As profinite completion is exact for sequences with strict morphisms $\cite[\mathrm{page 14}]{Mil06}$,
\[ 0 \to \widehat{H^0(K_v,\hat{N}^c)}  \to \widehat{H^0(K_v,\hat{N})} \xrightarrow{\hat{\Psi}} \widehat{Hom_{\hat{\mathbb{Z}}}(N^{I_v},\mathbb{Z})} \to \hat{W'} \to 0 .\]
As $H^0(K_v,\hat{N}^c)$ is compact and totally disconnected (topologically it is a product of finitely many copies of $O_v^{*}$), it is a profinite group. Therefore, $\widehat{H^0(K_v,\hat{N}^c)}={H^0(K_v,\hat{N}^c)}$. Moreover, $\hat{W'}=W$ which is a finite group. Hence $W'=W$. That completes the proof of the proposition.
\end{proof}

\begin{examp}\label{local_duality_examp}
\begin{enumerate}
\item Let $N=\mathbb{Z}$. Then $\hat{N}=\bar{K}_v^{*}$ and $\hat{N}^c=\bar{O}_v^{*}$. We have $H^1(I_v,\mathbb{Z})=0$, $H^1(K_v,\mathbb{G}_m)=0$ and $Ext^1_{\hat{\mathbb{Z}}}(\mathbb{Z},\mathbb{Z})=0$. In this case, ($\ref{local_duality_sq}$) is no other than
	\[ 0 \to O_v^{*} \to K_v^{*} \to \mathbb{Z} \to 0. \]
	\item Let $N=\mathbb{Z}/n$. We have $\widehat{\mathbb{Z}/n} \simeq \mu_n$. Therefore, ($\ref{local_duality_sq}$) becomes
	\begin{equation}\label{local_duality_finite_sq}
	0 \to H^0(\hat{\mathbb{Z}},H^1(I_v,\mathbb{Z}/n))^D \to H^1(K_v,\mu_n) \to Ext^{1}_{\hat{\mathbb{Z}}}(\mathbb{Z}/n,\mathbb{Z}) \to 0.
	\end{equation}
We have $H^1(K_v,\mu_n) \simeq K_v^{*}/(K_v^{*})^n$ and $Ext^{1}_{\hat{\mathbb{Z}}}(\mathbb{Z}/n,\mathbb{Z}) \simeq \mathbb{Z}/n$. By $\cite[\mbox{page 144}]{CF10}$, $I_v^{ab} \simeq O_v^{*}$. Thus, $H^2(I_v,\mathbb{Z})\simeq H^1(I_v,\mathbb{Q}/\mathbb{Z})\simeq I_v^D \simeq (O_v^{*})^D$. Moreover, $H^1(I_v,\mathbb{Z})=0$. Hence, $H^1(I_v,\mathbb{Z}/n) \simeq (O_v^{*}/(O_v^{*})^n)^D$. Therefore, $(\ref{local_duality_finite_sq})$ reduces to
\[ 0 \to O_v^{*}/(O_v^{*})^n \to K_v^{*}/(K_v^{*})^n \to \mathbb{Z}/n \to 0. \]
\item Let $N=\hat{T}$ for some torus $T$ over $K_v$. Then $H^0(K_v,\hat{N})=T(K_v)$, 
	$H^0(K_v,\hat{N}^c)=T^c_v$, the maximal compact subgroup of $T(K_v)$ and 
\begin{equation}\label{local_duality_seq_2}
 0 \to \frac{T(K_v)}{T^c_v} \to Hom_{\hat{\mathbb{Z}}}(\hat{T}^{I_v},\mathbb{Z}) 
\to H^0(\hat{\mathbb{Z}},H^1(I_v,\hat{T}))^D \to H^1(K_v,T) \to Ext^{1}_{\hat{\mathbb{Z}}}(\hat{T}^{I_v},\mathbb{Z}) \to 0.
\end{equation}
\end{enumerate}
\end{examp}
\section{Proof of the Main Theorem}
Let $T$ be an algebraic torus over a number field $K$ with character group $\hat{T}$. Let $S$ be a finite set of places of $K$ containing $S_{\infty}$.
For $n\geq 1$, let $\mathbb{III}^n_{S}(T)$ be the kernel of the restriction map
\[  \Delta^n_{S} : H^n(K,T)\to \prod_{v \notin S}H^n(K_v,T).\]
\begin{lemma} \label{lemma_Sha_infty}
We have the formula
\begin{equation}\label{eqn_main_lemma}
 \frac{[\mathrm{cok}\Delta^1_{S}]}{[\mathbb{III}^1_{S}(T)]}=
\frac{[H^1(K,\hat{T})]}
{[\mathbb{III}^1(T)][\mathbb{III}^2(T)]\prod_{v\in S}[H^1(K_v,T)]}.
\end{equation}
\end{lemma}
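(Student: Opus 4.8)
The plan is to compare the $S$-restriction map $\Delta^1_S$ with the full restriction map $\Delta^1$, whose cokernel is controlled by the Poitou-Tate sequence, and to absorb the finite set $S$ by an elementary snake-lemma argument. First I would record the two-term complexes $A^\bullet=\bigl[H^1(K,T)\xrightarrow{\Delta^1}\prod_{\text{all }v}H^1(K_v,T)\bigr]$ and $B^\bullet=\bigl[H^1(K,T)\xrightarrow{\Delta^1_S}\prod_{v\notin S}H^1(K_v,T)\bigr]$, placed in degrees $0$ and $1$. Since $S$ is finite, the target of $\Delta^1$ splits as $\prod_{v\in S}H^1(K_v,T)\times\prod_{v\notin S}H^1(K_v,T)$, and the projection onto the second factor together with the identity on $H^1(K,T)$ gives a termwise-surjective morphism $A^\bullet\to B^\bullet$ whose kernel complex is $\prod_{v\in S}H^1(K_v,T)$ concentrated in degree $1$. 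The associated long exact cohomology sequence is then
\[
0\to \mathbb{III}^1(T)\to \mathbb{III}^1_S(T)\to \prod_{v\in S}H^1(K_v,T)\to \mathrm{cok}\,\Delta^1\to \mathrm{cok}\,\Delta^1_S\to 0,
\]
the connecting map $\mathbb{III}^1_S(T)\to\prod_{v\in S}H^1(K_v,T)$ being $x\mapsto(\mathrm{loc}_v x)_{v\in S}$. Every term is finite: $\mathbb{III}^1(T)$ and $\mathbb{III}^2(T)$ are finite, $\prod_{v\in S}H^1(K_v,T)$ is a finite product of finite local groups, and $\mathrm{cok}\,\Delta^1$ will be seen below to be finite, whence so are $\mathbb{III}^1_S(T)$ and $\mathrm{cok}\,\Delta^1_S$ by exactness. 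Taking orders in the five-term sequence yields
\[
\frac{[\mathrm{cok}\,\Delta^1_S]}{[\mathbb{III}^1_S(T)]}=\frac{[\mathrm{cok}\,\Delta^1]}{[\mathbb{III}^1(T)]\prod_{v\in S}[H^1(K_v,T)]}.
\]

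It then remains to identify $[\mathrm{cok}\,\Delta^1]$, and here I would invoke the Poitou-Tate sequence for $T$ and its character group $\hat T$, whose relevant portion is
\[
H^1(K,T)\xrightarrow{\Delta^1}\prod_{\text{all }v}H^1(K_v,T)\xrightarrow{\beta}H^1(K,\hat T)^D\xrightarrow{\gamma}H^2(K,T)\xrightarrow{\Delta^2}\prod_{\text{all }v}H^2(K_v,T).
\]
Exactness at the second term identifies $\mathrm{cok}\,\Delta^1$ with $\mathrm{im}\,\beta=\ker\gamma$, a subgroup of the finite group $H^1(K,\hat T)^D$ (this gives the finiteness used above); exactness at $H^2(K,T)$ identifies $\mathrm{im}\,\gamma$ with $\ker\Delta^2=\mathbb{III}^2(T)$. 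Hence $[\ker\gamma]=[H^1(K,\hat T)^D]/[\mathrm{im}\,\gamma]$, that is $[\mathrm{cok}\,\Delta^1]=[H^1(K,\hat T)]/[\mathbb{III}^2(T)]$. Substituting this into the previous display gives exactly \eqref{eqn_main_lemma}.

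The routine points are the finiteness bookkeeping and the exact shape of the Poitou-Tate sequence for tori (rather than for finite modules); I expect the only genuine subtlety to be the latter. Concretely, the hard part is to make sure that $\mathrm{cok}\,\Delta^1$ computed with the full product $\prod_{\text{all }v}H^1(K_v,T)$ really agrees with the cokernel occurring in the Poitou-Tate sequence — equivalently, that the image of $H^1(K,T)$ lands in, and is cofinite in, the restricted product, so that the identification $\mathrm{cok}\,\Delta^1\cong\ker\gamma$ is legitimate even though $H^1(K,T)$ and the full local product are themselves infinite. Once this compatibility is settled, the remainder of the argument is purely formal.
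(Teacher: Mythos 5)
Your proposal is correct and takes essentially the same route as the paper: the paper obtains your five-term exact sequence by applying the Snake Lemma to the evident diagram (your morphism of two-term complexes is just a repackaging of this), and then, exactly as you do, uses the Poitou--Tate sequence to get $[\mathrm{cok}\,\Delta^1]=[H^1(K,\hat T)]/[\mathbb{III}^2(T)]$ and substitutes. The compatibility issue you flag at the end is real but resolves the way you expect: a global class is unramified, hence locally trivial by Lang's theorem, at almost all places, so the image of $\Delta^1$ lies in the direct sum $\bigoplus_v H^1(K_v,T)$, and the paper's full-product notation is to be read as the restricted product (here equal to the direct sum) occurring in Milne's I.4.20.
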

\begin{proof}
We have the following commutative diagram
\[  \xymatrix{   0 \ar[r] & 0 \ar[r] \ar[d] & H^n(K,T) \ar[d]^{\Delta^n} \ar[r] & H^n(K,T) \ar[r] \ar[d]^{\Delta^n_{S}} & 0 \\
0 \ar[r] &\prod_{v \in S}H^n(K_v,T) \ar[r]  & \prod_{v}H^1(K_v,T) \ar[r]& \prod_{v \notin S}H^n(K_v,T)  \ar[r] &0  .}
\]
Applying the Snake lemma to the above diagram yields the exact sequence
\begin{equation}\label{seq_main_lemma_1}
0 \to \mathbb{III}^n(T) \to \mathbb{III}^n_{S}(T) \to \prod_{v \in S}H^n(K_v,T)
\to \mathrm{cok}\Delta^n \to \mathrm{cok}\Delta^n_{S} \to 0.
\end{equation}
Now we recall the Poitou-Tate exact sequence  $\cite[\mbox{I.4.20}]{Mil06}$,
\begin{multline}\label{Poitou_Tate}
0 \to \mathbb{III}^1(T) \to H^1(K,T) \xrightarrow{\Delta^1} \prod_{v}H^1(K_v,T)\to \\
 \to H^1(K,\hat{T})^{D} \to H^2(K,T) \xrightarrow{\Delta^2} \prod_{v}H^2(K_v,T) \to 
 H^0(K,\hat{T})^D \to 0.
\end{multline}
From ($\ref{Poitou_Tate}$), we deduce that $\mathrm{cok}\Delta^2\simeq H^0(K,\hat{T})^D$ and $[H^1(K,\hat{T})]=[\mathrm{cok}\Delta^1][\mathbb{III}^2(T)]$. 
Note that for $n=1$, ($\ref{seq_main_lemma_1}$) is an exact sequence of finite groups. Therefore, ($\ref{eqn_main_lemma}$) follows from  ($\ref{seq_main_lemma_1}$) and the fact that $[H^1(K,\hat{T})]=[\mathrm{cok}\Delta^1][\mathbb{III}^2(T)]$.
\end{proof}

\begin{thm}\label{thm_hTS}
Let $T$ be an algebraic torus over a number field $K$ with character group $\hat{T}$. Let $S$ be a finite set of places of $K$ containing $S_{\infty}$. Let $j:Spec(K)\to U=Spec(O_{K,S})$ be the inclusion of the generic point. Let $\Psi^n(j_{*}\hat{T})$ be the map $ H^n_{et}(U,j_{*}\hat{T})\to \prod_{v\in S}H^n(K_v,\hat{T})$. Then 
\begin{eqnarray}\label{eqn_main1}
 h_{T,S} = 
\frac{[Ext^1_U(j_{*}\hat{T},\mathbb{G}_m)][H^1(K,\hat{T})]}
{[\mathrm{ker}\Psi^1(j_{*}\hat{T})][\mathbb{III}^1(T)]\prod_{v\in S}[H^1(K_v,{T})]\prod_{v\notin S}[H^0(\hat{\mathbb{Z}},H^1(I_v,\hat{T}))]
}.
\end{eqnarray}
\end{thm}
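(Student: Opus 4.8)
The plan is to relate the class group $Cl_S(T)$ to the global $Ext$-groups $Ext^n_U(j_{*}\hat T,\mathbb G_m)$ through the ``divisor'' short exact sequence of sheaves on $U$,
\[ 0\to\mathbb G_m\to j_{*}\mathbb G_m\to\bigoplus_{v\notin S}i_{v*}\mathbb Z\to0, \]
and then to convert everything into Galois-cohomological terms using Artin--Verdier duality. Applying $\mathrm{RHom}_U(j_{*}\hat T,-)$ and identifying the terms by Lemma \ref{lemma_localext} (so that $Ext^n_U(j_{*}\hat T,j_{*}\mathbb G_m)=H^n(K,T)$ and $Ext^n_U(j_{*}\hat T,i_{v*}\mathbb Z)=Ext^n_{\hat{\mathbb Z}}(\hat T^{I_v},\mathbb Z)$) yields a long exact sequence beginning
\[ 0\to Hom_U(j_{*}\hat T,\mathbb G_m)\to T(K)\xrightarrow{\phi^0}\bigoplus_{v\notin S}Hom_{\hat{\mathbb Z}}(\hat T^{I_v},\mathbb Z)\to Ext^1_U(j_{*}\hat T,\mathbb G_m)\to H^1(K,T)\xrightarrow{\phi^1}\cdots. \]
First I would check that $\phi^0$ is, componentwise, the valuation map $\Psi$ of Proposition \ref{local_duality_fg}, so that the defining sequence \eqref{seq_unit_class_gp} of $Cl_S(T)$ maps compatibly into this one via the inclusions $T(K_v)/T_v^c\hookrightarrow Hom_{\hat{\mathbb Z}}(\hat T^{I_v},\mathbb Z)$ of Example \ref{local_duality_examp}. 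In particular $Hom_U(j_{*}\hat T,\mathbb G_m)=T(O_{K,S})$, and comparing the two sequences produces
\[ 0\to Cl_S(T)\to\mathrm{cok}\,\phi^0\to\bigoplus_{v\notin S}\mathrm{cok}\big(T(K_v)/T_v^c\to Hom_{\hat{\mathbb Z}}(\hat T^{I_v},\mathbb Z)\big)\to0. \]

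Next I would take orders. The local sequence \eqref{local_duality_seq_2} computes each local cokernel as $[H^0(\hat{\mathbb Z},H^1(I_v,\hat T))]\,[Ext^1_{\hat{\mathbb Z}}(\hat T^{I_v},\mathbb Z)]/[H^1(K_v,T)]$, and it also identifies the ``unramified'' subgroup $U^1_v:=\mathrm{im}\big(H^0(\hat{\mathbb Z},H^1(I_v,\hat T))^D\to H^1(K_v,T)\big)$ of order $[H^1(K_v,T)]/[Ext^1_{\hat{\mathbb Z}}(\hat T^{I_v},\mathbb Z)]$. Combining these with $[Ext^1_U(j_{*}\hat T,\mathbb G_m)]=[\mathrm{cok}\,\phi^0]\,[\ker\phi^1]$ (all groups finite by Proposition \ref{prop_finiteness_jN}), then feeding in Lemma \ref{lemma_Sha_infty} to trade $\mathrm{cok}\,\Delta^1_S$ and $\mathbb{III}^1_S(T)$ for $H^1(K,\hat T)$, $\mathbb{III}^1(T)$ and $\prod_{v\in S}[H^1(K_v,T)]$, and using the factorization $\phi^1=(\bigoplus_v\pi_v)\circ\Delta^1_S$ with $\pi_v\colon H^1(K_v,T)\twoheadrightarrow Ext^1_{\hat{\mathbb Z}}(\hat T^{I_v},\mathbb Z)$, the entire statement collapses to the single identity
\[ [\ker\Psi^1(j_{*}\hat T)]=[\mathrm{cok}\,\phi^1]\,[\mathbb{III}^2(T)]. \]

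To prove this last identity I would bring in Artin--Verdier duality and the compact-support sequence \eqref{seq_compact_coh}. The divisor sequence realizes $\mathrm{cok}\,\phi^1$ as $\ker\big(Ext^2_U(j_{*}\hat T,\mathbb G_m)\to H^2(K,T)\big)$, while \eqref{seq_compact_coh} realizes $\ker\Psi^1(j_{*}\hat T)$ as the quotient of $H^1_c(U,j_{*}\hat T)$ by $\mathrm{im}\big(\bigoplus_{v\in S}H^0(K_v,\hat T)\big)$. Theorem \ref{Artin-Verdier} gives $Ext^2_U(j_{*}\hat T,\mathbb G_m)\cong H^1_c(U,j_{*}\hat T)^D$, and under this pairing the localization map $Ext^2_U\to\bigoplus_{v\in S}H^2(K_v,T)$ is dual to the compact-support boundary $\bigoplus_{v\in S}H^0(K_v,\hat T)\to H^1_c$ (via local Tate duality $H^0(K_v,\hat T)^D\cong H^2(K_v,T)$ and Proposition \ref{local_duality_fg}(2)). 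Dualizing therefore presents $(\ker\Psi^1)^D$ as $\ker\big(Ext^2_U\to\bigoplus_{v\in S}H^2(K_v,T)\big)$, whereas $\mathrm{cok}\,\phi^1$ is the kernel of the global map $Ext^2_U\to H^2(K,T)$; the gap between ``locally trivial at $S$'' and ``globally trivial'' is exactly $\mathbb{III}^2(T)$ by the Poitou--Tate sequence \eqref{Poitou_Tate}.

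The hard part will be this final paragraph: matching the divisor and compact-support sequences under Artin--Verdier duality and extracting the factor $\mathbb{III}^2(T)$. The difficulty is twofold. First, one must verify that $\phi^\bullet$ and $\Psi^\bullet$ are genuinely adjoint for the duality pairing, i.e. that the Artin--Verdier duality for $j_{*}\mathbb G_m$ intertwines these two long exact sequences; this is a compatibility of canonical pairings rather than a formal diagram chase. Second, several groups in play ($Ext^2_U$ and $Ext^3_U$, $H^1_c$, and the local $H^0(K_v,\hat T)$ for $v\in S$) are only of cofinite type or finitely generated rather than finite, so I must isolate the finite subquotients with care, using the finiteness statements of Proposition \ref{prop_finiteness_jN} together with the profinite-completion form of Theorem \ref{Artin-Verdier}(1), to ensure that passing to orders is legitimate at each step.
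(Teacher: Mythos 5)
Your proposal follows essentially the same route as the paper's own proof: the same divisor sequence $0\to\mathbb{G}_m\to j_{*}\mathbb{G}_m\to\coprod_{v\notin S}i_{*}\mathbb{Z}\to 0$, the same comparison with the id\`ele-theoretic sequence \eqref{seq_unit_class_gp} via the local valuation maps, the same input of the local sequence \eqref{local_duality_seq_2} and of Lemma \ref{lemma_Sha_infty}, and the same reduction to the single identity $[\ker\Psi^1(j_{*}\hat{T})]=[\mathrm{cok}\,\phi^1]\,[\mathbb{III}^2(T)]$, which the paper isolates as a separate lemma and proves by exactly the Artin--Verdier/Poitou--Tate duality argument you sketch (your $\phi^n$ is the paper's $\Theta^n_S$). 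The duality compatibility and finiteness issues you flag as the ``hard part'' are precisely what the paper's final lemma handles with its $R_1^D$ diagram chase, so the proposal is correct and matches the paper's proof.
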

\begin{proof}
From the short exact sequence of \'etale sheaves on $U$
\[ 0\to \mathbb{G}_m \to j_{*}\mathbb{G}_m \to \coprod_{v\notin S}i_{*}\mathbb{Z} \to 0 \]
we obtain the long exact sequence
\begin{equation}\label{value_galois_seq0}
 ...\to Ext_U^n(j_{*}\hat{T},\mathbb{G}_m) \to  Ext_U^n(j_{*}\hat{T},j_{*}\mathbb{G}_m) \to 
 \prod_{v \notin S}Ext_U^n(j_{*}\hat{T},i_{*}\mathbb{Z}) \to ...
\end{equation}
By Lemma $\ref{lemma_localext}$ and Proposition $\ref{local_duality_fg}$, ($\ref{value_galois_seq0}$) can be rewritten as
\begin{equation}\label{value_galois_seq1}
...\to Ext_U^n(j_{*}\hat{T},\mathbb{G}_m) \to H^n(K,T) \xrightarrow{\Theta^n_S} 
 \prod_{v \notin S}Ext^{n}_{\hat{\mathbb{Z}}}(\hat{T}^{I_v},\mathbb{Z}) \to ... 
\end{equation}
From ($\ref{value_galois_seq1}$), we note that $\ker(\Theta_S^0)\simeq Hom_U(j_{*}\hat{T},\mathbb{G}_m)$ and
\begin{equation}\label{value_galois_seq4}
0 \to \mathrm{cok}(\Theta^0_S) \to Ext_U^1(j_{*}\hat{T},\mathbb{G}_m) 
\to \ker(\Theta^1_S) \to 0 ,
\end{equation}
\begin{equation}\label{value_galois_seq6}
 0 \to \mathrm{cok}(\Theta^1_S) \to Ext_U^2(j_{*}\hat{T},\mathbb{G}_m) \to 
 \mathbb{III}^{2}_{S}(T) \to 0 .
\end{equation}
Note that $\mathrm{cok}(\Theta^0_S)$, $\ker(\Theta^1_S)$ are finite groups as $Ext_U^1(j_{*}\hat{T},\mathbb{G}_m)$ is finite. 
For each finite prime $v$ of $K$, we split the sequence $(\ref{local_duality_seq_2})$ into
\begin{equation}\label{value_galois_seq9}
 0 \to T(K_v)/T_v^{c} \to Hom_{\hat{\mathbb{Z}}}(\hat{T}^{I_v},\mathbb{Z}) \to S_v \to 0 ,
\end{equation}
\begin{equation}\label{value_galois_seq10}
 0 \to S_v \to H^0(\hat{\mathbb{Z}},H^1(I_v,\hat{T}))^{D} \to H^1(K_v,T) 
\to Ext^{1}_{\hat{\mathbb{Z}}}(\hat{T}^{I_v},\mathbb{Z}) \to 0 .
\end{equation}
From ($\ref{value_galois_seq9}$), we obtain the following commutative diagram
\[ \xymatrix{  0 \ar[r] & T(K) \ar[r] \ar[d] & T(K) \ar[r] \ar[d]^{\Theta^0_S} & 0 \ar[r] \ar[d] & 0\\
            0 \ar[r] & \prod_{v\notin S} \frac{T(K_v)}{T_v^{c}} \ar[r] & 
  \prod_{v\notin S}Hom_{\hat{\mathbb{Z}}}(\hat{T}^{I_v},\mathbb{Z}) \ar[r] & 
  \prod_{v\notin S}{S_v} \ar[r] & 0 \\}
\]
The Snake Lemma combining with ($\ref{seq_unit_class_gp}$) yield $T(O_{K,S}) \simeq Hom_U(j_{*}\hat{T},\mathbb{G}_m) $ and 
\begin{equation}\label{value_galois_seq7}
 0 \to Cl_S(T) \to \mathrm{cok}(\Theta^0_S) \to \prod_{v \notin S}S_v \to 0 .
\end{equation}
As $\mathrm{cok}(\Theta^0_S)$ is finite, ($\ref{value_galois_seq7}$) implies 
$\prod_{v \notin S}S_v$ is finite and 
\begin{equation}\label{value_galois_seq8}
h_{T,S}=\frac{[ \mathrm{cok}(\Theta^0_S)]}{\prod_{v \notin S}[S_v]}
=\frac{[Ext_U^1(j_{*}\hat{T},\mathbb{G}_m)]}
{[\ker(\Theta^1_S)]\prod_{v \notin S}[S_v]}.
\end{equation}
From ($\ref{value_galois_seq10}$), we have
\[  \xymatrix{   0 \ar[r] & 0 \ar[r] \ar[d] & H^1(K,T) \ar[d]^{\Delta^1_{S}} \ar[r] & H^1(K,T) \ar[r] \ar[d]^{\Theta^1_S} & 0 \\
   0 \ar[r] & \prod_{v\notin S} \frac{H^0(\hat{\mathbb{Z}},H^1(I_v,\hat{T}))^{D}}{S_v} \ar[r]  & \prod_{v\notin S} H^1(K_v,T) \ar[r] & \prod_{v\notin S} Ext^1_{\hat{\mathbb{Z}}}(\hat{T}^{I_v},\mathbb{Z}) \ar[r] & 0  }
\]
By the Snake Lemma, we obtain
\begin{equation}\label{value_galois_seq11}
 0 \to \mathbb{III}^1_{S}(T) \to \ker(\Theta^1_S) \to \prod_{v\notin S}
 \frac{H^0(\hat{\mathbb{Z}},H^1(I_v,\hat{T}))^{D}}{S_v} 
\to \mathrm{cok}(\Delta^1_{S}) \to \mathrm{cok}(\Theta^1_S) \to 0.
\end{equation}
 As $\mathrm{cok}(\Delta^1_{S})$ and $\prod_{v\notin S}S_v$ are finite, so are $\mathrm{cok}(\Theta^1_S)$ and
 $\prod_{v\notin S}H^0(\hat{\mathbb{Z}},H^1(I_v,\hat{T}))$. Therefore from ($\ref{value_galois_seq11}$) and Lemma $\ref{lemma_Sha_infty}$,
\begin{equation}\label{value_galois_seq13}
[\ker(\Theta^1_S)]=\frac{[\mathrm{cok}(\Theta^1_S)][\mathbb{III}^1(T)][\mathbb{III}^2(T)]\prod_{v\in S}[H^1(K_v,T)]}{[H^1(K,\hat{T})]}
\prod_{v\notin S}\frac{[H^0(\hat{\mathbb{Z}},H^1(I_v,\hat{T}))^{D}]}{[S_v]}.
\end{equation}
Putting together ($\ref{value_galois_seq8}$) and ($\ref{value_galois_seq13}$), we obtain 
\begin{equation}\label{value_galois_seq14}
h_{T,S}=\frac{[Ext_U^1(j_{*}\hat{T},\mathbb{G}_m)][H^1(K,\hat{T})]}
{[\mathrm{cok}(\Theta^1_S)][\mathbb{III}^1(T)][\mathbb{III}^2(T)]\prod_{v\in S}[H^1(K_v,T)]\prod_{v\notin S}[H^0(\hat{\mathbb{Z}},H^1(I_v,\hat{T}))]}.
\end{equation}
The theorem will follow from the next lemma.
\end{proof}
\begin{lemma}Notations as in Theorem $\ref{thm_hTS}$. Then
\begin{equation}
[\mathbb{III}^2(T)][\mathrm{cok}(\Theta^1_S)]=[\mathrm{ker}\Psi^1(j_{*}\hat{T})].
\end{equation}
\end{lemma}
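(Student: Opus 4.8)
The plan is to express both sides of the asserted identity as finite invariants of a single cofinite-type group and to read them off from two short exact sequences sharing that group in the middle. First I would identify $\Psi^n(j_{*}\hat{T})$ with the localization map in Milne's compact-support sequence \eqref{seq_compact_coh}: since the complement of $U=Spec(O_{K,S})$ is exactly $S$ and the generic fibre of $j_{*}\hat{T}$ is $\hat{T}$, that sequence reads $\cdots\to H^n_c(U,j_{*}\hat{T})\to H^n_{et}(U,j_{*}\hat{T})\xrightarrow{\Psi^n(j_{*}\hat{T})}\bigoplus_{v\in S}H^n(K_v,\hat{T})\to\cdots$, so its degree $0$--$1$ part gives $0\to\mathrm{cok}\Psi^0(j_{*}\hat{T})\to H^1_c(U,j_{*}\hat{T})\to\ker\Psi^1(j_{*}\hat{T})\to 0$. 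By Artin-Verdier duality (Theorem \ref{Artin-Verdier}, $r=2$) there is an isomorphism $E:=Ext^2_U(j_{*}\hat{T},\mathbb{G}_m)\cong H^1_c(U,j_{*}\hat{T})^{D}$ of groups of cofinite type. Applying the exact functor $(\,\cdot\,)^{D}$ (recall $\mathbb{Q}/\mathbb{Z}$ is injective) to the displayed sequence and transporting along this isomorphism yields an exact row $0\to(\ker\Psi^1)^{D}\to E\to(\mathrm{cok}\Psi^0)^{D}\to 0$.

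Next I would place this beside \eqref{value_galois_seq6}, namely $0\to\mathrm{cok}\Theta^1_S\to E\to\mathbb{III}^2_S(T)\to 0$, whose right-hand term is $\ker\Theta^2_S=\mathbb{III}^2_S(T)$ after the identification $Ext^2_{\hat{\mathbb{Z}}}(\hat{T}^{I_v},\mathbb{Z})\cong H^2(K_v,T)$ of Proposition \ref{local_duality_fg}(2). The two rows share the middle term $E$, and I would compare them through the identity on $E$. The natural map on the right is the localization $v_R:\mathbb{III}^2_S(T)\to\prod_{v\in S}H^2(K_v,T)$, $c\mapsto(\mathrm{res}_v c)_v$. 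Local Tate duality identifies $\prod_{v\in S}H^2(K_v,T)\cong\prod_{v\in S}H^0(K_v,\hat{T})^{D}$, and Poitou-Tate reciprocity (vanishing of the sum of local pairings on a global class, read off from \eqref{Poitou_Tate}) shows that for $c\in\mathbb{III}^2_S(T)$ the tuple $(\mathrm{res}_v c)_v$ is orthogonal to the image of $\Psi^0$, since $\mathrm{res}_v c=0$ for $v\notin S$ makes the sum over $S$ equal to the sum over all $v$. Hence $v_R$ factors through $(\mathrm{cok}\Psi^0)^{D}=\ker\big(\prod_{v\in S}H^0(K_v,\hat{T})^{D}\to H^0(K,\hat{T})^{D}\big)$. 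Because local duality is perfect, $c\in\ker v_R$ iff $\mathrm{res}_v c=0$ for all $v\in S$; combined with the defining vanishing of $\mathbb{III}^2_S(T)$ outside $S$ this gives $\ker v_R=\mathbb{III}^2(T)$.

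I would then verify that the right square commutes, i.e. $v_R\circ p_1=p_2$, where $p_1:E\to\mathbb{III}^2_S(T)$ is induced by $\mathbb{G}_m\to j_{*}\mathbb{G}_m$ and $p_2:E\cong H^1_c(U,j_{*}\hat{T})^{D}\to(\mathrm{cok}\Psi^0)^{D}$ is dual to the inclusion $\mathrm{cok}\Psi^0\hookrightarrow H^1_c(U,j_{*}\hat{T})$. Granting this, $p_2\circ i_1=v_R\circ p_1\circ i_1=0$ (where $i_1$ is the inclusion of $\mathrm{cok}\Theta^1_S$), so the universal property of the kernel produces a unique $v_L:\mathrm{cok}\Theta^1_S\to(\ker\Psi^1)^{D}$ completing the ladder. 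The Snake Lemma applied to the two rows with vertical maps $(v_L,\mathrm{id}_E,v_R)$ then collapses to $v_L$ injective, $v_R$ surjective, and $\mathrm{cok}\,v_L\cong\ker v_R=\mathbb{III}^2(T)$. Reading off orders of these finite groups gives $[\ker\Psi^1(j_{*}\hat{T})]=[(\ker\Psi^1)^{D}]=[\mathrm{cok}\Theta^1_S]\,[\mathbb{III}^2(T)]$, which is the claim.

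The hard part will be the commutativity $v_R\circ p_1=p_2$: it encodes the compatibility of Artin-Verdier duality with localization at the places of $S$ and with local Tate duality, i.e. that the global pairing realizing $E\cong H^1_c(U,j_{*}\hat{T})^{D}$ restricts, on its local contributions, to the sum of the local Tate pairings. This compatibility is built into Milne's construction of $H^{*}_c$ and of the duality pairing (\cite[II.3]{Mil06}, together with \cite[I.2,\,I.3]{Mil06}), but writing it out while keeping track of orientation conventions and the archimedean Tate contributions is the delicate point. As a fallback that avoids this naturality, one can instead count orders directly: Artin-Verdier gives $[E/E_{div}]=[(H^1_c)_{tor}]$, the $H^1_c$-sequence gives $[(H^1_c)_{tor}]=[(\mathrm{cok}\Psi^0)_{tor}]\,[\ker\Psi^1]$, and \eqref{value_galois_seq6} together with \eqref{seq_main_lemma_1} for $n=2$ express $[E/E_{div}]$ through $[\mathrm{cok}\Theta^1_S]$, $[\mathbb{III}^2(T)]$ and the divisible local terms $H^2(K_v,T)$ (divisible for finite $v$ by local duality); matching maximal divisible subgroups makes the infinite contributions cancel and recovers the same identity.
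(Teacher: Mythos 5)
Your main argument is essentially the paper's own proof: both dualize the degree $0$--$1$ portion of the compact-support sequence \eqref{seq_compact_coh} via Artin--Verdier duality to obtain $0\to(\mathrm{ker}\Psi^1(j_{*}\hat{T}))^D\to Ext^2_U(j_{*}\hat{T},\mathbb{G}_m)\to(\mathrm{cok}\Psi^0(j_{*}\hat{T}))^D\to 0$, identify that cokernel term with the localization of $\mathbb{III}^2_S(T)$ at $S$ using Poitou--Tate reciprocity and local duality, compare with \eqref{value_galois_seq6}, and finish by a snake-lemma chase; moreover, the commutativity you single out as the delicate point is likewise asserted rather than proved in the paper, being the standard compatibility of the Artin--Verdier pairing with the local Tate pairings (\cite[\mbox{II.2--3}]{Mil06}). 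The only genuine defect is in your (unneeded) fallback count: for a short exact sequence $0\to A\to B\to C\to 0$ of finitely generated groups with $C$ finite it is not true in general that $[B_{tor}]=[A_{tor}][C]$ (e.g.\ $0\to\mathbb{Z}\xrightarrow{2}\mathbb{Z}\to\mathbb{Z}/2\to 0$), so that alternative sketch does not go through as stated.
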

\begin{proof}
From ($\ref{seq_compact_coh}$), we have an exact sequence 
\begin{equation}\label{seq_2nd_lemma_1}
H^0(K,\hat{T}) \to \prod_{v\in S}H^0(K_v,\hat{T}) \to H^1_c(U,j_{*}\hat{T})\to \mathrm{ker}\Psi^1(j_{*}\hat{T}) \to 0.
\end{equation}
Let $R_1$ be the cokernel of the map $ H^0(K,\hat{T})\to \prod_{v\in S}H^0(K_v,\hat{T})$. Then $R_1^D$ can be identified with the kernel of the map
\[ \prod_{v\in S}H^2(K_v,{T}) \to H^0(K,\hat{T})^D \] 
 from ($\ref{Poitou_Tate}$). 
 As $Ext^2_U(j_{*}\hat{T},\mathbb{G}_m)\simeq H^1_c(U,j_{*}\hat{T})^D$, taking dual of ($\ref{seq_2nd_lemma_1}$) yields
\[ 0 \to (\mathrm{ker}\Psi^1(j_{*}\hat{T}))^D \to Ext^2_U(j_{*}\hat{T},\mathbb{G}_m) \to R_1^D \to 0.\]
From ($\ref{seq_main_lemma_1}$) for $n=2$, we have the commutative diagram
\begin{equation}
\xymatrix{
0 \ar[r] & (\mathrm{ker}\Psi^1(j_{*}\hat{T}))^D \ar[r] \ar[d] & Ext_U^2(j_{*}\hat{T},\mathbb{G}_m) \ar[r]\ar@{->>}[d] & R_1^D \ar[r] \ar[d]^{\simeq} & 0 \\
0 \ar[r] & \mathbb{III}^2(T) \ar[r] & \mathbb{III}_S^2(T) \ar[r] & R_1^D \ar[r] & 0.
}
\end{equation}
The lemma follows from ($\ref{value_galois_seq6}$) and diagram chasing. As a result, Theorem $\ref{thm_hTS}$ is proved.   
\end{proof}
\begin{cor}\label{cor_Hom_UT}
$T(O_{K,S})\simeq Hom_U(j_{*}\hat{T},\mathbb{G}_m)$.
\end{cor}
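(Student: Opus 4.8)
The plan is to isolate the isomorphism that already surfaces midway through the proof of Theorem \ref{thm_hTS} and present it as a freestanding statement. First I would specialize the long exact sequence \eqref{value_galois_seq1} to degree $n=0$. Using the identification $Ext_U^{0}(j_{*}\hat{T},j_{*}\mathbb{G}_m)\simeq H^0(K,T)=T(K)$ from Lemma \ref{lemma_localext}(1) and $Ext_U^{0}(j_{*}\hat{T},i_{*}\mathbb{Z})\simeq Hom_{\hat{\mathbb{Z}}}(\hat{T}^{I_v},\mathbb{Z})$ from Lemma \ref{lemma_localext}(2), this sequence begins
\[ 0 \to Hom_U(j_{*}\hat{T},\mathbb{G}_m) \to T(K) \xrightarrow{\Theta^0_S} \prod_{v\notin S}Hom_{\hat{\mathbb{Z}}}(\hat{T}^{I_v},\mathbb{Z}). \]
Because the preceding $Ext^{-1}$ term vanishes, the leftmost map is injective and identifies $Hom_U(j_{*}\hat{T},\mathbb{G}_m)$ with $\ker(\Theta^0_S)$.

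Next I would compute $\ker(\Theta^0_S)$ by appealing to the commutative diagram displayed just after \eqref{value_galois_seq9}, which is built from the short exact sequence \eqref{value_galois_seq9}. In that diagram the middle vertical arrow is $\Theta^0_S$ and the left vertical arrow is exactly the natural map $T(K)\to \prod_{v\notin S}T(K_v)/T_v^{c}$ occurring in \eqref{seq_unit_class_gp}. Applying the Snake Lemma, and observing that the right-hand column is the zero map out of the trivial group (so it contributes no kernel), I obtain $\ker(\Theta^0_S)\simeq \ker\!\big(T(K)\to \prod_{v\notin S}T(K_v)/T_v^{c}\big)$. Finally, the exact sequence \eqref{seq_unit_class_gp} exhibits this last kernel as $T(O_{K,S})$, and chaining the three identifications gives $T(O_{K,S})\simeq Hom_U(j_{*}\hat{T},\mathbb{G}_m)$.

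There is no genuine obstacle here, since every ingredient has been established earlier; the corollary is really a bookkeeping consequence of the proof of Theorem \ref{thm_hTS}. The only subtlety worth flagging is the mismatch between the direct sum $\coprod_{v\notin S}T(K_v)/T_v^{c}$ in \eqref{seq_unit_class_gp} and the direct product $\prod_{v\notin S}$ appearing in the diagram. This is harmless for the kernel computation, because an element of $T(K)$ maps to zero in the product precisely when it maps to zero in the sum, so both maps have the same kernel $T(O_{K,S})$.
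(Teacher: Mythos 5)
Your proposal is correct and follows essentially the same route as the paper: the corollary is exactly the identification made inside the proof of Theorem \ref{thm_hTS}, where $Hom_U(j_{*}\hat{T},\mathbb{G}_m)\simeq\ker(\Theta^0_S)$ comes from \eqref{value_galois_seq1} in degree $n=0$, and the Snake Lemma applied to the diagram built from \eqref{value_galois_seq9}, together with \eqref{seq_unit_class_gp}, identifies $\ker(\Theta^0_S)$ with $T(O_{K,S})$. Your remark that the $\coprod$ versus $\prod$ discrepancy is harmless for the kernel computation is a sound observation that the paper leaves implicit.
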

\section{Class Number Formulas for Norm Tori and Their Duals}
Let $L/K$ be a Galois extension of number fields with Galois group $G$. 
For each prime $v$ of $K$, choose a prime $w$ of $L$ lying over $v$. If $v$ is finite, let $D_w$ and $I_w$ be the decomposition group and the inertia group of $w$. Let $e_v(L/K)$ be the ramification index of $v$ in $L$. For any finite Galois extension $L/K$, let $L'/K$ be the maximal abelian subextension of $L/K$. Let $S$ be a finite set of places of $K$ containing $S_{\infty}$ and $S'$ be the set of places of $L$ which lie over a place in $S$. Let $\pi:Spec(L)\to Spec(K)$ and $\pi':Spec(O_{L,S'})\to Spec(O_{K,S})$ be the induced maps. If $G$ is a finite group and $M$ is a discrete $G$-module, we write $H^n_T(G,M)$ for the Tate cohomology. 

  Let $T=R_{L/K}^{(1)}(\mathbb{G}_m)$ be the norm torus corresponding to $L/K$ and $T'=R_{L/K}(\mathbb{G}_m)/\mathbb{G}_m$ be the dual torus of $T$. In this section, we shall compute $h_{T,S}$ and $h_{T',S}$ using $\eqref{eqn_main}$.

\begin{lemma}\label{lemma_norm_class_number}
Let $T=R_{L/K}^{(1)}(\mathbb{G}_m)$. Then
\begin{enumerate}
\item $H^0(K,\hat{T})=0$ and $[H^1(K,\hat{T})]=[L':K]$.
\item For any place $v$, $[H^1(K_v,{T})]= [H^1(K_v,\hat{T})]= [L_w':K_v]$.
\item For $v\notin S_{\infty}$, if $w$ is a place of $L$ lying above $v$ then $[H^0(\hat{\mathbb{Z}},H^1(I_v,\hat{T}))]= e_v(L'/K)$.
\item Let $I_L$ be the group of ideles of $L$. Then $\mathbb{III}^1(T)\simeq \ker(H^0_T(G,L^{*})\to H^0_T(G,I_L))$ 
\end{enumerate}
\end{lemma}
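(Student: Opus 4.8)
The plan is to dispatch the four parts using the two defining exact sequences of the norm torus. Writing $G=\mathrm{Gal}(L/K)$ and $N_G=\sum_{g\in G}g$, the character module sits in
\[0\to\mathbb{Z}\xrightarrow{N_G}\mathbb{Z}[G]\to\hat{T}\to0,\]
where $\mathbb{Z}[G]=\widehat{R_{L/K}\mathbb{G}_m}$ is the induced module $\mathrm{Ind}_{G_L}^{G_K}\mathbb{Z}$; dually, $T$ fits in $1\to T\to R_{L/K}\mathbb{G}_m\xrightarrow{N}\mathbb{G}_m\to1$. My recurring tools will be Shapiro's lemma, Hilbert 90, the vanishing $H^n(H,\mathbb{Z}[H])=0$ for $n\ge1$, the identification $H^2(H,\mathbb{Z})\simeq\mathrm{Hom}(H,\mathbb{Q}/\mathbb{Z})\simeq(H^{ab})^{\vee}$ for a (pro)finite group $H$, local class field theory, and the local duality package of Proposition~\ref{local_duality_fg} and Example~\ref{local_duality_examp}.

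For part (1), I take $G_K$-cohomology of the first sequence. Since $\hat T$ is torsion free, $H^1(G_L,\hat T)=0$, so inflation gives $H^n(K,\hat T)\simeq H^n(G,\hat T)$; as $\mathbb{Z}[G]$ is induced its higher cohomology vanishes, and the long exact sequence collapses to $H^0(K,\hat T)=\mathrm{cok}\bigl(\mathbb{Z}\xrightarrow{\sim}(\mathbb{Z}[G])^G\bigr)=0$ and $H^1(K,\hat T)\simeq H^2(G,\mathbb{Z})\simeq(G^{ab})^{\vee}$, of order $|G^{ab}|=[L':K]$. Part (2) is the same computation over $K_v$ with $D_w=\mathrm{Gal}(L_w/K_v)$ in place of $G$, giving $[H^1(K_v,\hat T)]=|D_w^{ab}|=[L_w':K_v]$; the equality $[H^1(K_v,T)]=[H^1(K_v,\hat T)]$ then follows from local Tate duality for the pairing $T\times\hat T\to\mathbb{G}_m$ (Proposition~\ref{local_duality_fg}), or directly: Hilbert 90 and Shapiro give $H^1(K_v,R_{L_w/K_v}\mathbb{G}_m)=0$, so the local norm sequence yields $H^1(K_v,T)\simeq K_v^{*}/N_{L_w/K_v}L_w^{*}\simeq D_w^{ab}$ by local class field theory.

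For part (3), I use inflation--restriction for $I_v\triangleleft G_{K_v}$ with quotient $\hat{\mathbb{Z}}$; since $\mathrm{cd}(\hat{\mathbb{Z}})=1$ this gives
\[0\to H^1(\hat{\mathbb{Z}},\hat T^{I_v})\to H^1(K_v,\hat T)\to H^0(\hat{\mathbb{Z}},H^1(I_v,\hat T))\to0.\]
Thus $[H^0(\hat{\mathbb{Z}},H^1(I_v,\hat T))]$ is the order of the quotient of $H^1(K_v,\hat T)$ by its unramified part, and combining this with part (2) and the splitting of $K_v^{*}/N_{L_w/K_v}L_w^{*}\simeq D_w^{ab}$ into its unramified and inertia parts identifies this order with the order of the inertia subgroup of $\mathrm{Gal}(L_w'/K_v)=D_w^{ab}$, i.e. the ramification index of $L_w'/K_v$. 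Equivalently one computes $H^1(I_v,\hat T)\simeq H^2(I_w,\mathbb{Z})\simeq(I_w^{ab})^{\vee}$, with $\hat{\mathbb{Z}}$ acting through Frobenius-conjugation, and takes invariants.

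Finally, for part (4) I take $G_K$-cohomology of $1\to T\to R_{L/K}\mathbb{G}_m\to\mathbb{G}_m\to1$. Hilbert 90 and Shapiro give $H^1(K,R_{L/K}\mathbb{G}_m)\simeq H^1(L,\mathbb{G}_m)=0$, whence $H^1(K,T)\simeq K^{*}/N_{L/K}L^{*}=H^0_T(G,L^{*})$, and likewise $H^1(K_v,T)\simeq H^0_T(D_w,L_w^{*})$. Since $\prod_{w\mid v}L_w^{*}=\mathrm{Ind}_{D_w}^{G}L_w^{*}$, Shapiro identifies $\prod_v H^0_T(D_w,L_w^{*})$ with $H^0_T(G,I_L)$, and under these identifications the global restriction map defining $\mathbb{III}^1(T)$ becomes the map $H^0_T(G,L^{*})\to H^0_T(G,I_L)$ induced by $L^{*}\hookrightarrow I_L$; hence $\mathbb{III}^1(T)$ is its kernel. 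I expect part (3) to be the main obstacle: one must track the $\hat{\mathbb{Z}}$-action on $H^1(I_v,\hat T)$, where Frobenius acts simultaneously by conjugation on inertia and on the module, and the delicate point is to match the order of the Frobenius-invariants with $e_v(L'/K)$ — the computation most naturally produces the ramification index of the local maximal abelian subextension $L_w'/K_v$, so this identification must be made with care.
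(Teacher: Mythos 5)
Parts (1), (2) and (4) of your proposal are correct and essentially coincide with the paper's proof: (1) and (2) run the long exact sequence of $0\to\mathbb{Z}\to\mathbb{Z}[G]\to\hat{T}\to0$ exactly as the paper does, and for (4), where the paper simply cites Platonov--Rapinchuk, your derivation from $1\to T\to R_{L/K}\mathbb{G}_m\to\mathbb{G}_m\to1$ via Hilbert 90, Shapiro and the semi-local decomposition of $H^0_T(G,I_L)$ is the standard complete argument. The genuine gap is in part (3). Your main route rests on the short exact sequence
\[ 0\to H^1(\hat{\mathbb{Z}},\hat{T}^{I_v})\to H^1(K_v,\hat{T})\to H^0(\hat{\mathbb{Z}},H^1(I_v,\hat{T}))\to 0, \]
justified by ``$\mathrm{cd}(\hat{\mathbb{Z}})=1$''. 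That bound applies only to \emph{torsion} coefficients; $\hat{T}^{I_v}$ is finitely generated and torsion free, and $H^2(\hat{\mathbb{Z}},\hat{T}^{I_v})$ does not vanish in general: already $H^2(\hat{\mathbb{Z}},\mathbb{Z})\simeq H^1(\hat{\mathbb{Z}},\mathbb{Q}/\mathbb{Z})\simeq\mathbb{Q}/\mathbb{Z}$, and for the norm torus $\hat{T}^{I_v}$ has a Frobenius-fixed part of rank $g-1$ (with $g$ the number of primes of $L$ above $v$), so $H^2(\hat{\mathbb{Z}},\hat{T}^{I_v})\neq0$ whenever $g\geq2$. Hochschild--Serre therefore only gives
\[ 0\to H^1(\hat{\mathbb{Z}},\hat{T}^{I_v})\to H^1(K_v,\hat{T})\to H^0(\hat{\mathbb{Z}},H^1(I_v,\hat{T}))\xrightarrow{d_2} H^2(\hat{\mathbb{Z}},\hat{T}^{I_v}), \]
and the vanishing of the transgression $d_2$ is exactly what you need and do not prove. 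The paper is careful about this very point: in Proposition~\ref{local_duality_fg} the term $H^2(\hat{\mathbb{Z}},N^{I_v})$ is retained, and sequence~(\ref{local_duality_sq}) is a six-term sequence rather than two short ones precisely because $d_2$ (equivalently, the surjectivity of $T(K_v)/T_v^c\to Hom_{\hat{\mathbb{Z}}}(\hat{T}^{I_v},\mathbb{Z})$) is not automatic for a general torus. For the norm torus $d_2$ does vanish, but the only way to see it from a counting argument is to already know $[H^0(\hat{\mathbb{Z}},H^1(I_v,\hat{T}))]$ --- the very quantity being computed --- so your argument is circular as it stands.

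Your fallback (``equivalently, compute $H^1(I_v,\hat{T})\simeq H^2(I_w,\mathbb{Z})$ with its Frobenius action and take invariants'') is where the paper's proof actually starts, but you stop at exactly the step you flag as the main obstacle. The paper finishes it without ever leaving finite groups: apply Hochschild--Serre to $1\to I_w\to D_w\to D_w/I_w\to1$ with coefficients $\mathbb{Z}$. Since $H^1(I_w,\mathbb{Z})=0$ (so $E_2^{m,1}=0$) and $H^m(D_w/I_w,\mathbb{Z})=0$ for odd $m$ (the quotient is finite cyclic), the sequence degenerates to
\[ 0\to H^2(D_w/I_w,\mathbb{Z})\to H^2(D_w,\mathbb{Z})\to H^0\bigl(D_w/I_w,H^2(I_w,\mathbb{Z})\bigr)\to 0, \]
that is, $0\to(D_w/I_w)^D\to D_w^D\to H^0(\hat{\mathbb{Z}},H^1(I_v,\hat{T}))\to0$. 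Hence the group in question is dual to the image of $I_w$ in $D_w^{ab}$ and has order $[D_w^{ab}]/f$ with $f=[D_w:I_w]$, i.e.\ the ramification index of the maximal abelian subextension of $L_w/K_v$. Note that computing $H^0(D_w/I_w,(I_w^{ab})^D)$ head-on gives the dual of the coinvariants $I_w/[D_w,I_w]$, and identifying those with the image of $I_w$ in $D_w^{ab}$ is not formal either; the spectral sequence is what accomplishes this. Finally, your closing worry is well founded and is not merely a matter of ``care'': the computation produces the ramification index of the \emph{local} maximal abelian subextension of $L_w/K_v$, which can differ from the ramification index of $v$ in the global field $L'$ (already when $D_w=I_w\simeq\mathbb{Z}/3$ inside $G=S_3$, e.g.\ $v=7$ in the splitting field of $x^3-7$), so one must read the statement with the local interpretation in mind.
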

\begin{proof}
\begin{enumerate}
\item We have the exact sequence
\begin{equation}\label{lemma_norm_class_number_eqn1}
0 \to \mathbb{Z} \to \mathbb{Z}[G] \to \hat{T} \to 0. 
\end{equation}
From ($\ref{lemma_norm_class_number_eqn1}$) and the fact that $H^n(G,\mathbb{Z}[G])=0$ for $n>0$, we deduce that
\begin{eqnarray*}
H^0(K,\hat{T})&\simeq & H^0(G,\hat{T})\simeq H^1(G,\mathbb{Z})=0, \\
H^1(K,\hat{T})&\simeq & H^1(G,\hat{T})\simeq H^2(G,\mathbb{Z}) \simeq H^1(G,\mathbb{Q}/\mathbb{Z})\simeq Hom_{\mathbb{Z}}(G^{ab},\mathbb{Q}/\mathbb{Z}).
\end{eqnarray*}
\item From Tate's local duality, $[H^1(K_v,{T})]= [H^1(K_v,\hat{T})]=[H^1(D_w,\hat{T})]$.
Since $\mathbb{Z}[G]$ is also an induced $D_w$-module, part 2) can be obtained from $\eqref{lemma_norm_class_number_eqn1}$ as in part 1).
\item We have $H^0(\hat{\mathbb{Z}},H^1(I_v,\hat{T}))\simeq H^0(D_w/I_w,H^1(I_w,\hat{T}))$. From sequence ($\ref{rel_class_number_eqn2}$) and the fact that $\mathbb{Z}[G]$ is an induced $I_w$-module, $H^1(I_w,\hat{T})\simeq H^2(I_w,\mathbb{Z})$. Consider the spectral sequence 
\[ E_2^{m,n}=H^m(D_w/I_w,H^n(I_w,\mathbb{Z}))\Rightarrow E^{m+n}=H^{m+n}(D_w,\mathbb{Z}).\]
Note that $E_2^{m,1}=0$ for all $m$ and $E_2^{m,0}=0$ for $m$ odd. Therefore, we obtain the exact sequence
\[ 0 \to (D_w/I_w)^D \to D_w^D \to H^0(D_w/I_w,H^2(I_w,\mathbb{Z})) \to 0 .\]
Hence, $H^0(D_w/I_w,H^2(I_w,\mathbb{Z}))\simeq I_w^D$. Therefore, $[H^0(\hat{\mathbb{Z}},H^1(I_v,\hat{T}))]= e_v(L'/K)$.
\item This is proved in $\cite[\mbox{page 307}]{PR92}$. 
\end{enumerate}
\end{proof}
\begin{thm}[\cite{Ono87},\cite{Mor91}]\label{thm_rel_class_number}
 Let $T=R_{L/K}^{(1)}(\mathbb{G}_m)$. Then 
\begin{equation}
h_{T,S}= \frac{h_{L,S'}[L':K][H^0_T(G,O_{L,S'}^{*})]}{h_{K,S}[\ker(H^0_T(G,L^{*})\to H^0_T(G,I_L))]\prod_{v\in S}[L_w':K_v] \prod_{v\notin S}e_v(L'/K)}.
\end{equation}
\end{thm}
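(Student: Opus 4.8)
The plan is to apply the Main Theorem $\eqref{eqn_main}$ to $T=R_{L/K}^{(1)}(\mathbb{G}_m)$ and substitute the four quantities supplied by Lemma $\ref{lemma_norm_class_number}$: the values $[H^1(K,\hat{T})]=[L':K]$, $\prod_{v\in S}[H^1(K_v,T)]=\prod_{v\in S}[L_w':K_v]$, $\prod_{v\notin S}[H^0(\hat{\mathbb{Z}},H^1(I_v,\hat{T}))]=\prod_{v\notin S}e_v(L'/K)$, and $[\mathbb{III}^1(T)]=[\ker(H^0_T(G,L^*)\to H^0_T(G,I_L))]$. Comparing the result with the asserted formula, every factor cancels except that the theorem becomes equivalent to the single identity
\[ \frac{[Ext^1_U(j_*\hat{T},\mathbb{G}_m)]}{[\ker\Psi^1(j_*\hat{T})]}=\frac{h_{L,S'}\,[H^0_T(G,O_{L,S'}^*)]}{h_{K,S}}. \]
Thus the substance of the proof is to compute these two \'etale invariants on the left.

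The engine for both is the exact sequence of sheaves on $U$
\[ 0\to j_*\mathbb{Z}\to j_*\mathbb{Z}[G]\to j_*\hat{T}\to 0, \]
obtained by applying $j_*$ to $\eqref{lemma_norm_class_number_eqn1}$; the right map is surjective because its cokernel embeds in $R^1j_*\mathbb{Z}$, whose stalks $H^1(I_v,\mathbb{Z})$ vanish. Writing $V=\mathrm{Spec}(O_{L,S'})$ and $j':\mathrm{Spec}(L)\to V$, the relation $\pi'\circ j'=j\circ\pi$ gives $j_*\mathbb{Z}[G]=\pi'_*\mathbb{Z}$. First I would apply $Ext^\bullet_U(-,\mathbb{G}_m)$. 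Using $Ext^n_U(\pi'_*\mathbb{Z},\mathbb{G}_m)\simeq H^n_{et}(V,\mathbb{G}_m)$ (as in the proof of Proposition $\ref{prop_finiteness_jN}$), $Ext^n_U(j_*\mathbb{Z},\mathbb{G}_m)=H^n_{et}(U,\mathbb{G}_m)$, and Corollary $\ref{cor_Hom_UT}$, the long exact sequence reads
\[ 0\to T(O_{K,S})\to O_{L,S'}^*\xrightarrow{N} O_{K,S}^*\to Ext^1_U(j_*\hat{T},\mathbb{G}_m)\to Cl_{S'}(L)\xrightarrow{\nu} Cl_S(K)\to\cdots, \]
where $N$ and $\nu$ are the norm maps induced by $\mathbb{Z}\xrightarrow{N_G}\mathbb{Z}[G]$ with $N_G=\sum_{g\in G}g$ (identifying the $N_G$-transfer with the field and ideal norms is the standard compatibility of the Shapiro isomorphism). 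Since $H^0_T(G,O_{L,S'}^*)=O_{K,S}^*/N(O_{L,S'}^*)=\mathrm{cok}(N)$, this gives $[Ext^1_U(j_*\hat{T},\mathbb{G}_m)]=[H^0_T(G,O_{L,S'}^*)]\cdot[\ker\nu]$.

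It remains to show $[\ker\Psi^1(j_*\hat{T})]=[\mathrm{cok}\,\nu]$. For this I would feed the same sheaf sequence through the localization sequence $\eqref{seq_compact_coh}$. Because $H^1_{et}(U,j_*\mathbb{Z})=H^1_{et}(V,\mathbb{Z})=0$ and, for each $v\in S$, $H^1(K_v,\mathbb{Z})=\bigoplus_{w\mid v}H^1(L_w,\mathbb{Z})=0$, the connecting map $\delta:H^1_{et}(U,j_*\hat{T})\hookrightarrow H^2_{et}(U,\mathbb{Z})$ is injective, is compatible with $\Psi$, and carries $\ker\Psi^1(j_*\hat{T})$ isomorphically onto $\ker\hat{N}\cap\ker\Psi^2(j_*\mathbb{Z})$, where $\hat{N}:H^2_{et}(U,\mathbb{Z})\to H^2_{et}(V,\mathbb{Z})$ is induced by $\mathbb{Z}\to\mathbb{Z}[G]$. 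Using the same vanishing of local $H^1$'s, $\ker\Psi^2(j_*\mathbb{Z})=\mathrm{im}(H^2_c(U,\mathbb{Z})\to H^2_{et}(U,\mathbb{Z}))\simeq H^2_c(U,\mathbb{Z})$, and Artin--Verdier duality (Theorem $\ref{Artin-Verdier}$) yields $H^2_c(U,\mathbb{Z})\simeq Ext^1_U(\mathbb{Z},\mathbb{G}_m)^D=Cl_S(K)^D$, under which $\hat{N}$ becomes the dual of $\nu$. Hence $\ker\Psi^1(j_*\hat{T})\simeq(\mathrm{cok}\,\nu)^D$, so the two orders agree. Combining with the previous paragraph and the elementary identity $[\ker\nu]/[\mathrm{cok}\,\nu]=[Cl_{S'}(L)]/[Cl_S(K)]=h_{L,S'}/h_{K,S}$ gives the displayed identity and hence the theorem.

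I expect the last paragraph to be the main obstacle. The $Ext^1$ computation and the final reduction are routine long-exact-sequence bookkeeping once the clean sheaf sequence is in hand, but pinning down $[\ker\Psi^1(j_*\hat{T})]$ requires correctly matching the local conditions at $S$ with the Artin--Verdier dual of the norm map on $S$-class groups: one must verify that $\Psi^1(j_*\hat{T})$ is genuinely computed by the connecting homomorphism into $H^2_{et}(U,\mathbb{Z})$ followed by $\Psi^2(j_*\mathbb{Z})$, and that under the duality isomorphism $\hat{N}$ really goes over to $\nu^{D}$. These naturality and compatibility checks are where the care is needed.
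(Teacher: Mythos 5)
Your proposal is correct and is essentially the paper's own proof: both reduce, via the Main Theorem and Lemma \ref{lemma_norm_class_number}, to the single identity $[Ext^1_U(j_{*}\hat{T},\mathbb{G}_m)]/[\ker\Psi^1(j_{*}\hat{T})]=h_{L,S'}[H^0_T(G,O_{L,S'}^{*})]/h_{K,S}$, and both establish it from the sheaf sequence $0\to\mathbb{Z}\to\pi'_{*}\mathbb{Z}\to j_{*}\hat{T}\to 0$ on $U$ by combining the long exact sequence of $Ext$-groups with Artin--Verdier duality and a diagram chase resting on $H^1_{et}(U,\mathbb{Z})=0$, $H^1_{et}(V,\mathbb{Z})=0$ and $H^1(K_v,\mathbb{Z})=0$. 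The only cosmetic difference is the direction of the chase: you push $\ker\Psi^1(j_{*}\hat{T})$ forward through the connecting map into $H^2_{et}(U,\mathbb{Z})$ and then apply duality, whereas the paper first dualizes the tail $Pic(O_{L,S'})\to Pic(O_{K,S})\to R\to 0$ and then identifies $R^D$ with $\ker\Psi^1(j_{*}\hat{T})$.
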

\begin{proof}
We have the exact sequence of $G_K$-modules
\begin{equation}\label{rel_class_number_eqn1}
0 \to \mathbb{Z} \to \pi_{*}\mathbb{Z} \to \hat{T} \to 0.
\end{equation}
Since $R^1j_{*}\mathbb{Z}=0$, ($\ref{rel_class_number_eqn1}$) induces the exact sequence of \'etale sheaves on $U=Spec(O_{K,S})$
\[ 0 \to \mathbb{Z} \to \pi'_{*}\mathbb{Z} \to j_{*}\hat{T} \to 0. \]
The long exact sequence of Ext-groups yields  
\begin{equation}\label{rel_class_number_eqn2}
0 \to  H^0_T(G,O_L^{*}) \to 
Ext^1_{U}(j_{*}\hat{T},\mathbb{G}_m) \to Pic(O_{L,S'})\to Pic(O_{K,S}) \to R \to 0.
\end{equation}
By Artin-Verdier Duality, $R^D$ is the kernel of the map
$   H^2_{c}(U,\mathbb{Z})\to 
H^2_{c}(U,\pi'_{*}\mathbb{Z})$ .
From $\cite[\mathrm{II.2.11}]{Mil06}$ and the fact that $H^1_{et}(U,\mathbb{Z})=0$ and $H^1(K_v,\mathbb{Z})=0$, there is a commutative diagram
\begin{equation*}
\xymatrix{
         &  &  0 \ar[d] & 0 \ar[d] \\
         &     & H^1_{et}(U,j_{*}\hat{T}) \ar[r]^{\Psi^1(j_{*}\hat{T})} \ar[d] & \prod_{v\in S}H^1(K_v,\hat{T}) \ar[d] \\
0 \ar[r] & H^2_c(U,\mathbb{Z}) \ar[r] \ar[d] & H^2_{et}(U,\mathbb{Z}) \ar[r] \ar[d]
& \prod_{v\in S}H^2(K_v,\mathbb{Z}) \ar[d] \\
0\ar[r]  & H^2_c(U,\pi'_{*}\mathbb{Z})\ar[r]  & H^2_{et}(U,\pi'_{*}\mathbb{Z}) \ar[r] 
& \prod_{v\in S}H^2(K_v,\pi'_{*}\mathbb{Z})  
}
\end{equation*}
By diagram chasing, $R^D \simeq \mathrm{ker}\Psi^1(j_{*}\hat{T})$. By ($\ref{rel_class_number_eqn2}$), 
\begin{equation}\label{rel_class_number_eqn3}
\frac{[Ext^1_{U}(j_{*}\hat{T},\mathbb{G}_m)]}{[\mathrm{ker}\Psi^1(j_{*}\hat{T})]}
=\frac{h_{L,S'}[H^0_T(G,O_{L,S'}^{*})]}{h_{K,S}}.
\end{equation} 
Now ($\ref{thm_rel_class_number}$) follows from ($\ref{rel_class_number_eqn3}$), Theorem $\ref{thm_hTS}$ and Lemma $\ref{lemma_norm_class_number}$. 
\end{proof}
\begin{rmk}
We note a simple but useful fact : if $Pic(O_K)=0$ then $\ker\Psi^1(j_{*}\hat{T})=0$.
\end{rmk}
\begin{lemma}\label{lemma_proj_class_number}
Let $T'=R_{L/K}(\mathbb{G}_m)/\mathbb{G}_m$. Then
\begin{enumerate}
\item $H^0(K,\hat{T}')=0$ and $H^1(K,\hat{T}')\simeq \mathbb{Z}/[G]\mathbb{Z}$.
\item For $v\notin S_{\infty}$, 
$[H^0(\hat{\mathbb{Z}},H^1(I_v,\hat{T}'))] = e_v(L/K)$.
\item $\mathbb{III}^1(T')=0$.
\end{enumerate}
\end{lemma}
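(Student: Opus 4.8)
The plan is to first make the character module explicit. Dualizing the defining exact sequence of tori $1\to \mathbb{G}_m \to R_{L/K}(\mathbb{G}_m)\to T'\to 1$ (the first map being the diagonal) gives the exact sequence of $G$-modules
\[ 0 \to \hat{T}' \to \mathbb{Z}[G] \xrightarrow{\epsilon} \mathbb{Z} \to 0, \]
where $\epsilon$ is the augmentation; hence $\hat{T}'$ is the augmentation ideal $I_G=\ker\epsilon$. Since $\hat{T}'$ is torsion free and the $G_K$-action factors through $G=\mathrm{Gal}(L/K)$, inflation-restriction together with $H^1(G_L,\hat{T}')=\mathrm{Hom}_{\mathrm{cont}}(G_L,\hat{T}')=0$ identifies $H^n(K,\hat{T}')$ with the finite-group cohomology $H^n(G,I_G)$ for $n=0,1$; this is the same reduction used in Lemma \ref{lemma_norm_class_number}. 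For part (1) I would then run the long exact $G$-cohomology sequence of $0\to I_G \to \mathbb{Z}[G]\xrightarrow{\epsilon}\mathbb{Z}\to 0$. As $\mathbb{Z}[G]$ is induced, $H^n(G,\mathbb{Z}[G])=0$ for $n>0$, so the sequence collapses to $0\to I_G^G \to \mathbb{Z}[G]^G \xrightarrow{\epsilon}\mathbb{Z}\to H^1(G,I_G)\to 0$. The invariants $\mathbb{Z}[G]^G$ are generated by the norm element $N_G=\sum_{g\in G}g$ with $\epsilon(N_G)=[G]$, so $\epsilon$ on invariants is multiplication by $[G]$; reading off kernel and cokernel yields $H^0(K,\hat{T}')=0$ and $H^1(K,\hat{T}')\simeq \mathbb{Z}/[G]\mathbb{Z}$.

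For part (2), as in Lemma \ref{lemma_norm_class_number} I first reduce $H^0(\hat{\mathbb{Z}},H^1(I_v,\hat{T}'))$ to the finite-group expression $H^0(D_w/I_w,H^1(I_w,I_G))$, using that the inertia action factors through the finite inertia subgroup $I_w\subseteq G$ and that $H^1$ of the wild part vanishes on a torsion-free module. Restricted to $I_w$ the module $\mathbb{Z}[G]$ is again free, so $H^{n}(I_w,\mathbb{Z}[G])=0$ for $n>0$, and the long exact sequence gives $H^1(I_w,I_G)\simeq \mathrm{cok}\bigl(\mathbb{Z}[G]^{I_w}\xrightarrow{\epsilon}\mathbb{Z}\bigr)$. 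Since $I_w$ acts freely on $G$, the group $\mathbb{Z}[G]^{I_w}$ is free on the orbit sums, each of which has augmentation $|I_w|=e_v(L/K)$; hence the cokernel is $\mathbb{Z}/e_v(L/K)\mathbb{Z}$. The connecting map is a $D_w/I_w$-equivariant surjection from $H^0(I_w,\mathbb{Z})=\mathbb{Z}$ (trivial action), so the induced action on $H^1(I_w,I_G)$ is trivial and passing to $(D_w/I_w)$-invariants does not change the order, giving $[H^0(\hat{\mathbb{Z}},H^1(I_v,\hat{T}'))]=e_v(L/K)$.

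For part (3) I would argue directly with the torus rather than with $\hat{T}'$. The long exact Galois-cohomology sequence of $1\to\mathbb{G}_m\to R_{L/K}(\mathbb{G}_m)\to T'\to 1$, combined with Shapiro's lemma $H^i(K,R_{L/K}(\mathbb{G}_m))\simeq H^i(L,\mathbb{G}_m)$ and Hilbert 90, identifies $H^1(K,T')$ with the relative Brauer group $\ker(\mathrm{Br}(K)\xrightarrow{\mathrm{res}}\mathrm{Br}(L))\subseteq \mathrm{Br}(K)$, and likewise $H^1(K_v,T')\hookrightarrow \mathrm{Br}(K_v)$ for each $v$, compatibly with localization. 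An element of $\mathbb{III}^1(T')$ is then a class in $\mathrm{Br}(K)$ whose image in every $\mathrm{Br}(K_v)$ vanishes, hence is $0$ by the injectivity of $\mathrm{Br}(K)\to\bigoplus_v\mathrm{Br}(K_v)$ in the fundamental exact sequence of class field theory; therefore $\mathbb{III}^1(T')=0$. (Alternatively one could deduce this from $\mathbb{III}^1(T')\simeq \mathbb{III}^2(K,\hat{T}')^D$ via Poitou-Tate and vanish $\mathbb{III}^2(K,I_G)$, but the Brauer-group argument is more self-contained.)

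I expect the delicate points to be the two reductions to finite-group cohomology and, in part (2), verifying that the residual $D_w/I_w$-action on $H^1(I_w,I_G)$ is trivial, so that taking invariants preserves the order; in part (3) the main care lies in matching the global and local identifications so that the localization maps are genuinely compatible before the Hasse principle is invoked.
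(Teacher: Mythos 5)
Your proposal is correct. For parts (1) and (2) you follow essentially the paper's own route: both arguments rest on the augmentation sequence $0\to\hat{T}'\to\mathbb{Z}[G]\xrightarrow{\epsilon}\mathbb{Z}\to 0$, the vanishing of higher cohomology of the induced module $\mathbb{Z}[G]$ (also after restriction to $I_w$), the fact that $\epsilon$ on $G$-invariants is multiplication by $[G]$, and in part (2) the identification of $H^1(I_w,\hat{T}')$ with the cokernel of $\mathbb{Z}[G]^{I_w}\xrightarrow{\epsilon}\mathbb{Z}$ together with triviality of the residual $D_w/I_w$-action; your orbit-sum computation of that cokernel is just an unwinding of the paper's identification $H^1(I_w,\hat{T}')\simeq H^0_T(I_w,\mathbb{Z})$. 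The genuine difference is part (3): the paper does not prove it, citing \cite{Kat91}, whereas you supply a self-contained argument, identifying $H^1(K,T')$ with the relative Brauer group $\ker(\mathrm{Br}(K)\to\mathrm{Br}(L))$ via Shapiro's lemma and Hilbert 90, doing the same over each $K_v$, and invoking the injectivity of $\mathrm{Br}(K)\to\bigoplus_v\mathrm{Br}(K_v)$ from the fundamental exact sequence of class field theory. This is the standard proof and it is sound; the compatibility you flag as delicate is simply functoriality of the long exact cohomology sequence under restriction to a decomposition group, since the global and local connecting maps come from one and the same short exact sequence of $K$-groups base-changed to $K_v$. What your route buys is independence from the reference at no real cost, since the Hasse principle for Brauer groups is machinery the paper relies on elsewhere (it underlies the Poitou--Tate sequence). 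One small correction of wording: in part (2) the kernel of $I_v\twoheadrightarrow I_w$ is the full inertia subgroup of $G_{L_w}$, not merely the wild part; your actual argument (continuous homomorphisms from a profinite group into the discrete torsion-free module $\hat{T}'$ vanish) applies verbatim to it, so nothing breaks.
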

\begin{proof}
The character group of $T'$ satisfies the following exact sequence
\begin{equation}\label{seq_lemma_proj_class_number_1}
0 \to \hat{T}' \to \mathbb{Z}[G] \xrightarrow{\epsilon} \mathbb{Z}\to 0.
\end{equation}
Consider the long exact sequence of $\eqref{seq_proj_class_number_1}$
\[ 0 \to H^0(G,\hat{T}') \to H^0(G,\mathbb{Z}[G]) \xrightarrow{\epsilon} H^0(G,\mathbb{Z}) \to H^1(G,\hat{T}') \to 0.\]
The augmentation map $\epsilon$ in this case is just the multiplication-by-$[G]$ map. Hence, part 1) follows. Part 3) is proved in $\cite[\mbox{page 685}]{Kat91}$. Let us prove part 2). Let $w$ be a prime of $L$ dividing $v$.
\[ H^0(\hat{\mathbb{Z}},H^1(I_v,\hat{T}'))\simeq H^0(D_w/I_w,H^1(I_w,\hat{T}')).\]  
By ($\ref{seq_lemma_proj_class_number_1}$), $H^1(I_w,\hat{T}')\simeq H^0_T(I_w,\mathbb{Z})$. As $D_w/I_w$ acts trivially on $H^0_T(I_w,\mathbb{Z})$,
\[ [H^0(D_w/I_w,H^1(I_w,\hat{T}'))]=[H^0_T(I_w,\mathbb{Z})]=[I_w]=e_v(L/K).\]
\end{proof}
\begin{thm}[$\cite{Kat91}$]\label{thm_proj_class_number}
Let $T'=R_{L/K}(\mathbb{G}_m)/\mathbb{G}_m$. Then
\begin{equation}
h_{T',S}= \frac{h_{L,S'}[H^1(G,O_{L,S'}^{*})]}{h_{K,S} \prod_{v\notin S}e_v(L/K)}.
\end{equation}
\end{thm}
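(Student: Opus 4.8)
The plan is to apply the Main Theorem $\eqref{eqn_main}$ to $T'$ and to substitute the cohomological data computed in Lemma \ref{lemma_proj_class_number}, namely $[H^1(K,\hat T')]=[G]$, $[\mathbb{III}^1(T')]=1$ and $\prod_{v\notin S}[H^0(\hat{\mathbb Z},H^1(I_v,\hat T'))]=\prod_{v\notin S}e_v(L/K)$. After this substitution the only quantities still to be evaluated are $[Ext^1_U(j_{*}\hat T',\mathbb G_m)]$, $[\ker\Psi^1(j_{*}\hat T')]$ and the local orders $\prod_{v\in S}[H^1(K_v,T')]$, and I would compute these by an argument modeled on the proof of Theorem \ref{thm_rel_class_number}.

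Concretely, I would start from the defining sequence \eqref{seq_lemma_proj_class_number_1}, which exhibits $\hat T'$ as the \emph{kernel} of the augmentation $\mathbb Z[G]\to\mathbb Z$ --- in contrast to the norm torus, where the character module was a \emph{cokernel}. Writing $\pi'_{*}\mathbb Z$ for the sheaf on $U$ attached to $\mathbb Z[G]$ and pushing \eqref{seq_lemma_proj_class_number_1} forward along $j$, and using $R^1j_{*}\mathbb Z=0$ together with $R^1j_{*}(\pi_{*}\mathbb Z)=0$ (the latter because $\pi_{*}\mathbb Z$ is induced and so has vanishing inertia cohomology), I obtain the four-term exact sequence of \'etale sheaves on $U$
\[ 0\to j_{*}\hat T'\to \pi'_{*}\mathbb Z\to \mathbb Z\to \mathcal N\to 0,\qquad \mathcal N:=R^1j_{*}\hat T', \]
where $\mathcal N$ is negligible in the sense of Definition \ref{defn_negligible}. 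Applying $Ext_U(-,\mathbb G_m)$ and using $Ext^n_U(\pi'_{*}\mathbb Z,\mathbb G_m)\simeq H^n_{et}(\mathrm{Spec}\,O_{L,S'},\mathbb G_m)$ and $Ext^n_U(\mathbb Z,\mathbb G_m)\simeq H^n_{et}(U,\mathbb G_m)$, I would read off the unit groups $O_{L,S'}^{*}$ and $O_{K,S}^{*}$, the Picard groups of orders $h_{L,S'}$ and $h_{K,S}$, and, by Corollary \ref{cor_Hom_UT}, the term $Hom_U(j_{*}\hat T',\mathbb G_m)\simeq T'(O_{K,S})$; the negligible sheaf contributes only in degrees $2$ and $3$, where the lemma on negligible sheaves gives $Ext^0_U(\mathcal N,\mathbb G_m)=Ext^1_U(\mathcal N,\mathbb G_m)=0$ and $[Ext^2_U(\mathcal N,\mathbb G_m)]=[Ext^3_U(\mathcal N,\mathbb G_m)]$.

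To identify $\ker\Psi^1(j_{*}\hat T')$ I would reproduce the Artin-Verdier diagram of Theorem \ref{thm_rel_class_number}: by Theorem \ref{Artin-Verdier} and the compact-support sequence \eqref{seq_compact_coh}, the relevant cokernel term of the $Ext$-sequence is Pontryagin dual to a kernel of the compact-support map between $H^2_c(U,\pi'_{*}\mathbb Z)$ and $H^2_c(U,\mathbb Z)$ coming from \eqref{seq_lemma_proj_class_number_1}, which a diagram chase should identify with $\ker\Psi^1(j_{*}\hat T')$. Counting orders along the $Ext$-sequence would then yield an analogue of \eqref{rel_class_number_eqn3}, and inserting this together with Lemma \ref{lemma_proj_class_number} into the Main Theorem would give the stated formula.

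I expect the main obstacle to be the bookkeeping forced by $\hat T'$ being a subobject rather than a quotient, precisely the feature absent in Theorem \ref{thm_rel_class_number}. On one hand, the pushforward is no longer right exact, so $\mathcal N$ appears and I must verify that its degree-$2$ and $3$ contributions cancel against the compact-support terms, using $[Ext^2_U(\mathcal N,\mathbb G_m)]=[Ext^3_U(\mathcal N,\mathbb G_m)]$ and $[H^0_{et}(U,\mathcal N)]=[H^1_{et}(U,\mathcal N)]$. On the other hand --- and this is the more delicate point --- the local factors $\prod_{v\in S}[H^1(K_v,T')]$, which for finite $v$ equal $[L_w:K_v]$ by \eqref{seq_lemma_proj_class_number_1} and Tate local duality, appear in the denominator produced by the Main Theorem yet are \emph{absent} from the target formula; I must therefore show that they are exactly absorbed by the matching local contributions in the $Ext$/Artin-Verdier computation, the surviving global discrepancy being exactly the factor $[H^1(G,O_{L,S'}^{*})]$. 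Establishing this cancellation, rather than the routine assembly of Picard and unit groups, is where the real work lies.
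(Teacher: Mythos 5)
Your scaffolding is exactly the paper's: the four-term sequence $0\to j_{*}\hat T'\to \pi'_{*}\mathbb Z\to \mathbb Z\to \mathcal R\to 0$ with $\mathcal R=R^1j_{*}\hat T'$ negligible, substitution of Lemma \ref{lemma_proj_class_number} into the Main Theorem, and the extraction of $[H^1(G,O_{L,S'}^{*})]$ via Corollary \ref{cor_Hom_UT} and the vanishing of $H^1$ of the induced module $R_{L/K}(\mathbb{G}_m)(O_{L,S'})$. But the one concrete mechanism you offer for the step you yourself call decisive would fail. You propose to identify $\ker\Psi^1(j_{*}\hat T')$ with (the dual of) the kernel of the compact-support map $H^2_c(U,\pi'_{*}\mathbb Z)\to H^2_c(U,\mathbb Z)$, transplanting the diagram from Theorem \ref{thm_rel_class_number}. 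By the functoriality of Artin--Verdier duality (Theorem \ref{Artin-Verdier}), that kernel is dual to $\mathrm{cok}\bigl(Pic(O_{K,S})\to Pic(O_{L,S'})\bigr)$ --- note the augmentation now points \emph{from} $\pi'_{*}\mathbb Z$ \emph{to} $\mathbb Z$, so the dual map on $Ext^1$ is pullback of line bundles, the opposite direction from the norm case --- and this group is not $\ker\Psi^1(j_{*}\hat T')$ in general. Take $K=\mathbb Q$, $L=\mathbb Q(\sqrt{-5})$, $S=S_{\infty}$: your kernel is $(\mathbb Z/2)^D\simeq\mathbb Z/2$, while $\ker\Psi^1(j_{*}\hat T')=0$, because for a quadratic extension $\hat T'\simeq\hat T$ and $H^1_{et}(U,j_{*}\hat T)$ embeds into $H^2_{et}(Spec(\mathbb Z),\mathbb Z)=0$. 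The failure is structural: in Theorem \ref{thm_rel_class_number} the module $\hat T$ is a \emph{quotient} of $\mathbb Z[G]$, so $H^1_{et}(U,j_{*}\hat T)$ sits inside $H^2_{et}(U,\mathbb Z)$ (as $H^1_{et}(U,\pi'_{*}\mathbb Z)=0$) where it can be compared with $H^2_c(U,\mathbb Z)$; here $\hat T'$ is a \emph{sub}module, so $H^1_{et}(U,j_{*}\hat T')$ is a quotient of $H^0_{et}(U,\mathcal Q)$, everything happens one degree lower, and no $H^2_c$-kernel description is available.

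Correspondingly, the cancellation you defer to the end is not an afterthought but the actual content, and the paper handles it without ever computing $\ker\Psi^1(j_{*}\hat T')$ by itself. Splitting the four-term sequence into $0\to j_{*}\hat T'\to\pi'_{*}\mathbb Z\xrightarrow{\alpha}\mathcal Q\to 0$ and $0\to\mathcal Q\xrightarrow{\beta}\mathbb Z\to\mathcal R\to 0$, the Ext-computation produces defect terms $[\mathrm{cok}(\alpha^1_c)]$, $[\mathrm{cok}(\beta^1_c)]$, $[Ext^2_U(\mathcal R,\mathbb G_m)]$, and Lemma \ref{lemma_proj_class_number_2} proves the single combined identity
\begin{equation*}
\frac{[\mathrm{cok}(\alpha^1_c)]\,[\mathrm{cok}(\beta^1_c)]\,[L:K]}{[Ext^2_U(\mathcal R,\mathbb G_m)]\,[\ker\Psi^1(j_{*}\hat T')]}=\prod_{v\in S}[H^1(K_v,\hat T')],
\end{equation*}
which is what makes the local factors disappear (recall $[H^1(K_v,\hat T')]=[H^1(K_v,T')]$ by local duality). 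This is done by two diagram chases on compact-support sequences: $\mathrm{cok}(\beta^1_c)=0$ since $\mathcal R_v=0$ and $\mathcal Q_v\simeq\mathbb Z$ for $v\in S$; the kernel term enters through $[\mathrm{cok}(\alpha^1_c)]=[H^1_{et}(U,\mathcal Q)][\mathrm{cok}\Psi^1(j_{*}\hat T')]$ together with $[\mathrm{cok}\Psi^1]=[\ker\Psi^1]\prod_{v\in S}[H^1(K_v,\hat T')]/[H^1_{et}(U,j_{*}\hat T')]$; and $[L:K]$ enters as the cokernel of the augmentation on $H^0_{et}$, giving $[L:K]=[H^1_{et}(U,j_{*}\hat T')][Ext^2_U(\mathcal R,\mathbb G_m)]/[H^1_{et}(U,\mathcal Q)]$. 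Without this bookkeeping your plan stops short of the theorem, and with your proposed $H^2_c$-identification inserted in its place it would give a wrong value of $h_{T',S}$ whenever $Pic(O_{K,S})\to Pic(O_{L,S'})$ is not surjective.
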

\begin{proof}
We have the following exact sequence of \'etale sheaves on $Spec(K)$
\begin{equation}\label{seq_proj_class_number_1}
0 \to \hat{T}' \to \pi_{*}\mathbb{Z} \xrightarrow{\epsilon} \mathbb{Z}\to 0.
\end{equation}
As $R^1j_{*}(\pi_{*}\mathbb{Z})=0$, ($\ref{seq_proj_class_number_1}$) induces
\begin{equation}\label{seq_proj_class_number_2}
0 \to j_{*}\hat{T}' \to \pi'_{*}\mathbb{Z} \xrightarrow{\epsilon} \mathbb{Z} \to R^1j_{*}\hat{T}' \to 0.
\end{equation}
To ease notation, let $\mathcal{R}=R^1j_{*}\hat{T}'$.  We split ($\ref{seq_proj_class_number_2}$) into 
\begin{equation}\label{seq_proj_class_number_3a}
0 \to j_{*}\hat{T}' \to \pi'_{*}\mathbb{Z} \xrightarrow{\alpha} \mathcal{Q} \to 0.
\end{equation}
\begin{equation}\label{seq_proj_class_number_3b}
0 \to \mathcal{Q} \xrightarrow{\beta} \mathbb{Z} \to \mathcal{R} \to 0.
\end{equation} 
Let $\beta_c^n$ be the map $H^n_{c}(X,\mathcal{Q})\to H^n_{c}(X,\mathbb{Z})$.
From the long exact sequences of $Ext$-groups of ($\ref{seq_proj_class_number_3b}$), we obtain $Hom_U(\mathcal{Q},\mathbb{G}_m) \simeq O_{K,S}^{*}$ and
\begin{eqnarray}
[Ext^1_U(\mathcal{Q},\mathbb{G}_m)]
= \frac{h_{K,S} [Ext^2_U(\mathcal{R},\mathbb{G}_m)]}{[\mathrm{cok}(\beta_c^1)]}.
\end{eqnarray}
Let $\alpha_c^n$ be the map $H^n_{c}(X,\pi'_{*}\mathbb{Z})\to H^n_{c}(X,\mathcal{Q})$.
Similar argument applied to sequence ($\ref{seq_proj_class_number_3a}$) yields 
\begin{eqnarray*}
[Ext^1_U(j_{*}\hat{T}',\mathbb{G}_m)]
&=& \frac{h_{L,S'}[\mathrm{cok}(\alpha^1_c)][S]}{[Ext^1_U(\mathcal{Q},\mathbb{G}_m)]}
=\frac{h_{L,S'}[S][\mathrm{cok}(\alpha^1_c)][\mathrm{cok}(\beta_c^1)]}
{h_{K,S}[Ext^2_U(\mathcal{R},\mathbb{G}_m)]}
\end{eqnarray*}
where $S$ satisfies the exact sequence
\begin{equation}\label{seq_proj_class_number_4}
0 \to O_K^{*} \to O_L^{*} \to Hom_U(j_{*}\hat{T}',\mathbb{G}_m) \to S \to 0.
\end{equation}
We claim that $S\simeq H^1(G,O_{L,S'}^{*})$. Indeed,
from Corollary $\ref{cor_Hom_UT}$, $Hom_U(j_{*}\hat{T}',\mathbb{G}_m)\simeq T'(O_{K,S})$. Therefore, ($\ref{seq_proj_class_number_4}$) can be identified with the sequence
\begin{equation}\label{seq_proj_class_number_6}
0 \to \mathbb{G}_m(O_{K,S}) \to R_{L/K}(\mathbb{G}_m)(O_{K,S}) \to T'(O_{K,S}) \to S \to 0 
\end{equation}
which is part of the long exact sequence of cohomology associated with 
\begin{equation}\label{seq_proj_class_number_7}
0 \to \mathbb{G}_m(O_{L,S'}) \to R_{L/K}(\mathbb{G}_m)(O_{L,S'}) \to T'(O_{L,S'}) \to 0. 
\end{equation}
 Note that $R_{L/K}(\mathbb{G}_m)(O_{L,S'})$ is an induced $G$-module thus $H^1(G,R_{L/K}(\mathbb{G}_m)(O_{L,S'}))=0$. Consider the long exact sequence of cohomology associated with ($\ref{seq_proj_class_number_7}$) and compare with ($\ref{seq_proj_class_number_6}$), we obtain $S\simeq H^1(G,O_{L,S'}^{*})$. 
 From Theorem $\ref{thm_hTS}$ and Lemma $\ref{lemma_proj_class_number}$, we have 
 \[h_{T',S}= \frac{h_{L,S'}[H^1(G,O_{L,S'}^{*})][L:K][\mathrm{cok}(\alpha^1_c)][\mathrm{cok}(\beta^1_c)]}
 {h_{K,S} \prod_{v\notin S}e_v(L/K)
\prod_{v\in S}[H^1(K_v,\hat{T}')][\mathrm{ker}\Psi^1(j_{*}\hat{T}')][Ext^2_U(\mathcal{R},\mathbb{G}_m)]}.
 \]
Theorem $\ref{thm_proj_class_number}$ will follow from the next lemma.
 \end{proof}
 \begin{lemma}\label{lemma_proj_class_number_2}
 Notations as in Theorem $\ref{thm_proj_class_number}$. Then
 \begin{equation}
  \frac{[\mathrm{cok}(\alpha^1_c)][\mathrm{cok}(\beta^1_c)][L:K]}
{[Ext^2_U(\mathcal{R},\mathbb{G}_m)][\mathrm{ker}\Psi^1(j_{*}\hat{T}')]}=\prod_{v\in S}[H^1(K_v,\hat{T}')].
 \end{equation}
 \end{lemma}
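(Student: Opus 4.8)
The plan is to compute the order of the finite group $H^2_c(U,j_{*}\hat{T}')$ in two independent ways and to reconcile the outcomes. By Artin--Verdier duality (Theorem \ref{Artin-Verdier}, the case $r=1$) together with the finiteness of $Ext^1_U(j_{*}\hat{T}',\mathbb{G}_m)$ from Proposition \ref{prop_finiteness_jN}, one has $H^2_c(U,j_{*}\hat{T}')\simeq Ext^1_U(j_{*}\hat{T}',\mathbb{G}_m)^D$, so this group is indeed finite. Before starting I would record the two structural inputs that isolate the correction factors in the statement. First, $\mathcal{R}=R^1j_{*}\hat{T}'$ is negligible, so by the negligible-sheaf lemma and Artin--Verdier duality one has $[Ext^2_U(\mathcal{R},\mathbb{G}_m)]=[H^1_c(U,\mathcal{R})]$ and $H^n_c(U,\mathcal{R})=0$ for $n\ge 2$. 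Second, by Lemma \ref{lemma_proj_class_number} the degree satisfies $[L:K]=[H^1(K,\hat{T}')]$, and I expect this factor to enter through the augmentation $\epsilon$, which acts as multiplication by $[G]$ on invariants.

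For the first computation I would feed $j_{*}\hat{T}'$ into the compact-support/\'etale comparison sequence \eqref{seq_compact_coh}. Around degrees $1$ and $2$ this yields $0\to \mathrm{cok}\,\Psi^1(j_{*}\hat{T}')\to H^2_c(U,j_{*}\hat{T}')\to \ker\Psi^2(j_{*}\hat{T}')\to 0$; since $H^1_{et}(U,j_{*}\hat{T}')$ is finite (Proposition \ref{prop_finiteness_jN}) and the local groups $\prod_{v\in S}H^1(K_v,\hat{T}')$ are finite, $[\mathrm{cok}\,\Psi^1]$ rewrites as $\prod_{v\in S}[H^1(K_v,\hat{T}')]\,[\ker\Psi^1(j_{*}\hat{T}')]/[H^1_{et}(U,j_{*}\hat{T}')]$. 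For the second computation I would use the long exact compact-support sequence of \eqref{seq_proj_class_number_3a}, which gives $0\to \mathrm{cok}(\alpha^1_c)\to H^2_c(U,j_{*}\hat{T}')\to \ker(\alpha^2_c)\to 0$ and thereby produces the factor $[\mathrm{cok}(\alpha^1_c)]$.

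Equating the two expressions for $[H^2_c(U,j_{*}\hat{T}')]$ reduces the lemma to an identity among the auxiliary quantities $[\ker\alpha^2_c]$, $[\ker\Psi^2]$, $[H^1_{et}(U,j_{*}\hat{T}')]$, $[\mathrm{cok}\,\beta^1_c]$, $[Ext^2_U(\mathcal{R},\mathbb{G}_m)]$ and $[L:K]$. To dispose of these I would invoke \eqref{seq_proj_class_number_3b}: its compact-support long exact sequence, using $H^n_c(U,\mathcal{R})=0$ for $n\ge 2$, identifies $[\ker\beta^2_c]=[H^1_c(U,\mathcal{R})]/[\mathrm{cok}\,\beta^1_c]=[Ext^2_U(\mathcal{R},\mathbb{G}_m)]/[\mathrm{cok}\,\beta^1_c]$. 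Since $\alpha^2_c$ and $\beta^2_c$ compose to the map $\epsilon^2_c$ on $H^2_c$ induced by the augmentation, and Artin--Verdier duality turns $\epsilon^2_c$ into the pullback $Pic(O_{K,S})\to Pic(O_{L,S'})$, this is the step that feeds in the degree $[L:K]$ and is meant to force the class-group contributions to cancel against $[H^1_{et}(U,j_{*}\hat{T}')]$ and $[\ker\Psi^2]$.

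The main obstacle is precisely this final cancellation. The individual groups $H^2_c(U,\pi'_{*}\mathbb{Z})$, $H^2_c(U,\mathbb{Z})$, $H^1_{et}(U,j_{*}\hat{T}')$, together with the free parts of $H^{\ast}_c(U,\pi'_{*}\mathbb{Z})$ and $H^{\ast}_c(U,\mathbb{Z})$, carry unit and class-number information that does not survive in the final formula, so the real work is to verify, by carefully comparing the connecting homomorphisms across \eqref{seq_compact_coh}, \eqref{seq_proj_class_number_3a} and \eqref{seq_proj_class_number_3b} and their Artin--Verdier duals, that every such large contribution cancels and that exactly the degree $[L:K]$ is left over. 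Assembling these pieces then yields the claimed equality.
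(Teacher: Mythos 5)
Your preparatory steps are all correct: $H^2_c(U,j_{*}\hat{T}')$ is finite, the two short exact sequences
\[
0\to \mathrm{cok}\,\Psi^1(j_{*}\hat{T}')\to H^2_c(U,j_{*}\hat{T}')\to \ker\Psi^2(j_{*}\hat{T}')\to 0,
\qquad
0\to \mathrm{cok}(\alpha^1_c)\to H^2_c(U,j_{*}\hat{T}')\to \ker(\alpha^2_c)\to 0
\]
both hold, and so does $[\ker(\beta^2_c)]\,[\mathrm{cok}(\beta^1_c)]=[H^1_c(U,\mathcal{R})]=[Ext^2_U(\mathcal{R},\mathbb{G}_m)]$. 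But equating the two computations of $[H^2_c(U,j_{*}\hat{T}')]$ does not prove the lemma; it only shows that the lemma is \emph{equivalent} to the identity
\[
[\ker\Psi^2(j_{*}\hat{T}')]\,[L:K]\;=\;[H^1_{et}(U,j_{*}\hat{T}')]\,[\ker(\alpha^2_c)]\,[\ker(\beta^2_c)],
\]
and this identity is never established. You acknowledge this yourself (``the main obstacle'', ``the real work''), but asserting that the large contributions are ``meant to'' cancel is not an argument: the groups you would have to control in degree two, namely $\ker\Psi^2(j_{*}\hat{T}')$, $\ker(\alpha^2_c)$, and the kernel and cokernel of the pullback $Pic(O_{K,S})\to Pic(O_{L,S'})$ (the Artin--Verdier dual of $\epsilon^2_c$), genuinely carry $S$-class-group and capitulation information, and no mechanism forcing their cancellation is exhibited. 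The unproven identity is exactly as strong as the lemma itself, so this is a genuine gap.

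Moreover, your heuristic for where $[L:K]$ enters points in the wrong direction. The kernel of $Pic(O_{K,S})\to Pic(O_{L,S'})$ is a capitulation kernel and its cokernel is a quotient of $Pic(O_{L,S'})$; neither is computed by the degree, so $\epsilon^2_c$ cannot by itself supply the factor $[L:K]$. In the paper the degree enters in degree \emph{zero}: $\epsilon^0_{et}$ is identified with the augmentation $H^0(G,\mathbb{Z}[G])\to H^0(G,\mathbb{Z})$, whose cokernel is $\mathbb{Z}/[L:K]\mathbb{Z}$, and combining this with $[\mathrm{cok}(\alpha^0_c)]=[H^1_{et}(U,j_{*}\hat{T}')]$, $[\mathrm{cok}(\beta^0_c)]=[H^0_{et}(U,\mathcal{R})]/[H^1_{et}(U,\mathcal{Q})]$, the injectivity of $\beta^0_c$, and the negligible-sheaf identity $[H^0_{et}(U,\mathcal{R})]=[Ext^2_U(\mathcal{R},\mathbb{G}_m)]$ gives $[L:K]=[H^1_{et}(U,j_{*}\hat{T}')][Ext^2_U(\mathcal{R},\mathbb{G}_m)]/[H^1_{et}(U,\mathcal{Q})]$. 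The paper then closes the argument entirely in degrees $0$ and $1$: since $\mathcal{Q}_v\simeq\mathbb{Z}$ for $v\in S$, the map $\beta^0_S$ is an isomorphism, whence $\mathrm{cok}(\beta^1_c)=0$, and a chase in the second diagram gives $[\mathrm{cok}(\alpha^1_c)]=[H^1_{et}(U,\mathcal{Q})]\,[\mathrm{cok}\,\Psi^1(j_{*}\hat{T}')]$; multiplying these facts yields the lemma, and no degree-two group is ever needed. To salvage your route you would have to carry out the analogous diagram chase one degree higher, relating $\ker\Psi^2(j_{*}\hat{T}')$ to $\ker(\alpha^2_c)$ and $\ker(\beta^2_c)$ --- which is precisely the part you left undone.
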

 \begin{proof}
 Consider the following commutative diagram 
  \begin{equation*}
\xymatrix{
0 \ar[r] & H^0_c(U,\mathcal{Q}) \ar[r] \ar[d]^{\beta^0_c} & H^0_{et}(U,\mathcal{Q}) \ar[r] \ar[d]^{\beta^0_{et}} 
& \underset{v\in S}{\prod}H^0(K_v,\mathcal{Q}_v)\ar[r] \ar[d]^{\beta^0_{S}}  & 
H^1_c(U,\mathcal{Q}) \ar[r] \ar[d]^{\beta^1_c}  & H^1_{et}(U,\mathcal{Q}) \ar[r] \ar[d]^{\beta^1_{et}} & 0\\
0\ar[r]  & H^0_c(U,\mathbb{Z})\ar[r]  & H^0_{et}(U,\mathbb{Z}) \ar[r] 
& \underset{v\in S}{\prod}H^0(K_v,\mathbb{Z}) \ar[r] & H^1_c(U,\mathbb{Z}) \ar[r] & 0 \ar[r] & 0
}
\end{equation*}
As $\mathcal{R}$ is negligible, $\mathcal{R}_v=0$, $\mathcal{Q}_v\simeq \mathbb{Z}$ and $\beta^0_S$ is an isomorphism. Hence $\mathrm{cok}(\beta^1_c)=0$. Next we consider the diagram.
\begin{equation*}
\xymatrix{
0 \ar[r] & H^0_c(U,\pi'_{*}\mathbb{Z}) \ar[r] \ar[d]^{\alpha^0_c} & H^0_{et}(U,\pi'_{*}\mathbb{Z}) \ar[r] \ar[d]^{\alpha^0_{et}} 
& \underset{v\in S}{\prod}H^0(K_v,\pi'_{*}\mathbb{Z})\ar[r] \ar[d]^{\alpha^0_{S}}  & 
H^1_c(U,\pi'_{*}\mathbb{Z}) \ar[r] \ar[d]^{\alpha^1_c}  & 0 \ar[d] \ar[r] & 0\\
0 \ar[r] & H^0_c(U,\mathcal{Q}) \ar[r]  & H^0_{et}(U,\mathcal{Q}) \ar[d] \ar[r]  
& \underset{v\in S}{\prod}H^0(K_v,\mathcal{Q}_v)\ar[r]  \ar[d] & 
H^1_c(U,\mathcal{Q}) \ar[r]  & H^1_{et}(U,\mathcal{Q}) \ar[r] & 0 \\
         &  &   H^1_{et}(U,j_{*}\hat{T}') \ar[r]^{\Psi^1(j_{*}\hat{T}')} \ar[d]& \underset{v\in S}{\prod}
         H^1(K_v,\hat{T}')  \ar[d] \\
         &  &   0  & 0
}
\end{equation*}
By diagram chasing, we have 
\begin{eqnarray}
[\mathrm{cok}(\alpha^1_c)]=[H^1_{et}(U,\mathcal{Q})][\mathrm{cok}\Psi^1(j_{*}\hat{T}')] 
= \frac{[H^1_{et}(U,\mathcal{Q})][\mathrm{ker}\Psi^1(j_{*}\hat{T}')]{\prod}_{v\in S} [H^1(K_v,\hat{T}')]}{[H^1_{et}(U,j_{*}\hat{T}')]}.
\end{eqnarray}
It is not hard to show that $[\mathrm{cok}(\alpha^0_c)]=[H^1_{et}(U,j_{*}\hat{T}')]$, 
$[\mathrm{cok}(\beta^0_c)]=[H^0_{et}(U,\mathcal{R})]/[H^1_{et}(U,\mathcal{Q})]$. Let $\epsilon^n_c : H^n_{c}(U,\pi'_{*}\mathbb{Z})\to H^n_{c}(U,\mathbb{Z})$ be the composition $\beta^n_c\circ\alpha^n_c$.
Note that $\epsilon^0_{et}$ can be identified with the augmentation map $H^0(G,\mathbb{Z}[G]) \to H^0(G,\mathbb{Z})$. Hence, $\mathrm{cok}(\epsilon^0_{et})\simeq  \mathbb{Z}/[L:K]\mathbb{Z}$. As $\beta^0_c$ is injective, 
$[\mathrm{cok}(\alpha^0_c)][\mathrm{cok}(\beta^0_c)]=[\mathrm{cok}(\epsilon^0_c)]$. Therefore,
\begin{equation}
[L:K]=\frac{[H^1_{et}(U,j_{*}\hat{T}')][H^0_{et}(U,\mathcal{R})]}{[H^1_{et}(U,\mathcal{Q})]}
=\frac{[H^1_{et}(U,j_{*}\hat{T}')][Ext^2_{U}(\mathcal{R},\mathbb{G}_m)]}{[H^1_{et}(U,\mathcal{Q})]}
\end{equation}
That completes the proofs of the lemma and Theorem $\ref{thm_proj_class_number}$.
 \end{proof}
 \begin{cor}[\cite{Neu99}\mbox{VI.3.5}]\label{cyclic_rel_class_number}
Suppose $L/K$ is a cyclic extension. Then the Herbrand quotient of $O_{L,S'}^{*}$ is given by
\begin{equation*}
h(G,O^{*}_{L,S'})= \frac{\prod_{v\in S}[L_w:K_v]}{[L:K]}.
\end{equation*}
\end{cor}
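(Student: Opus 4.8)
The plan is to derive the Herbrand quotient formula by playing the two class number formulas of Theorem \ref{thm_rel_class_number} and Theorem \ref{thm_proj_class_number} against each other in the cyclic case, where both simplify dramatically and most factors cancel. The essential observation is that these two theorems compute $h_{T,S}$ and $h_{T',S}$ in terms of $[H^0_T(G,O_{L,S'}^*)]$ and $[H^1(G,O_{L,S'}^*)]$ respectively, and the Herbrand quotient is (up to the standard conventions) the ratio of exactly these two orders; so eliminating the auxiliary quantities $h_{L,S'}$, $h_{K,S}$ and $\prod_{v\notin S}e_v$ between the two formulas should isolate the Herbrand quotient.

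Three simplifications occur when $L/K$ is cyclic. First, a cyclic extension is abelian, so its maximal abelian subextension is $L'=L$; hence $[L':K]=[L:K]$, $[L_w':K_v]=[L_w:K_v]$ and $e_v(L'/K)=e_v(L/K)$. Second, by Lemma \ref{lemma_norm_class_number}(4) the group $\ker\bigl(H^0_T(G,L^*)\to H^0_T(G,I_L)\bigr)$ is isomorphic to $\mathbb{III}^1(T)$, and it equals $(K^*\cap N_{L/K}I_L)/N_{L/K}L^*$; by the Hasse Norm Theorem for cyclic extensions this vanishes, so this kernel has order $1$. Third, and most importantly, I claim the two tori coincide, $T\cong T'$. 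On character groups, \eqref{lemma_norm_class_number_eqn1} gives $\hat T=\mathbb{Z}[G]/(N_G)$ with $N_G$ the norm element, while \eqref{seq_lemma_proj_class_number_1} gives $\hat T'=I_G$, the augmentation ideal. For cyclic $G=\langle\sigma\rangle$ the endomorphism $\mathbb{Z}[G]\xrightarrow{\,\cdot(\sigma-1)\,}\mathbb{Z}[G]$ has image $I_G$ and kernel $(N_G)$, which yields a $G$-isomorphism $\hat T=\mathbb{Z}[G]/(N_G)\xrightarrow{\ \sim\ }I_G=\hat T'$. By the anti-equivalence between tori and their character lattices this gives $T\cong T'$ over $K$, and in particular $h_{T,S}=h_{T',S}$.

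Granting these, I would write the two formulas side by side. With $L'=L$ and the kernel of order $1$, Theorem \ref{thm_rel_class_number} reads
\[ h_{T,S}=\frac{h_{L,S'}[L:K]\,[H^0_T(G,O_{L,S'}^*)]}{h_{K,S}\prod_{v\in S}[L_w:K_v]\prod_{v\notin S}e_v(L/K)}, \]
while Theorem \ref{thm_proj_class_number} reads
\[ h_{T',S}=\frac{h_{L,S'}\,[H^1(G,O_{L,S'}^*)]}{h_{K,S}\prod_{v\notin S}e_v(L/K)}. \]
Dividing the first by the second cancels $h_{L,S'}$, $h_{K,S}$ and $\prod_{v\notin S}e_v(L/K)$, and invoking $h_{T,S}=h_{T',S}$ gives
\[ 1=\frac{[L:K]}{\prod_{v\in S}[L_w:K_v]}\cdot\frac{[H^0_T(G,O_{L,S'}^*)]}{[H^1(G,O_{L,S'}^*)]}. \]
Since $G$ is cyclic, $H^1(G,O_{L,S'}^*)=H^1_T(G,O_{L,S'}^*)$ and the Herbrand quotient is $h(G,O_{L,S'}^*)=[H^0_T(G,O_{L,S'}^*)]/[H^1_T(G,O_{L,S'}^*)]$, so solving for this ratio yields the asserted formula.

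I expect the genuine content to lie entirely in the three structural inputs of the second paragraph; the final paragraph is pure bookkeeping. Among these inputs, the two that carry weight are classical and specific to the cyclic case: the character-module isomorphism $\mathbb{Z}[G]/(N_G)\cong I_G$ (elementary for cyclic $G$, false in general) and the vanishing of $\mathbb{III}^1(T)$ coming from the Hasse Norm Theorem (which genuinely fails for non-cyclic abelian $L/K$, where the knot group is nontrivial). The one point demanding care is purely notational: checking that the $H^0_T$ entering the norm-torus formula and the $H^1$ entering the dual-torus formula assemble into the Herbrand quotient with the correct normalization, which relies on the $2$-periodicity of Tate cohomology for the cyclic group $G$ and on $H^1=H^1_T$ in positive degree.
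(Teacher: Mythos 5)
Your proof is correct and follows essentially the same route as the paper's: the paper's own (much terser) proof likewise invokes Hasse's theorem to kill $\ker(H^0_T(G,L^{*})\to H^0_T(G,I_L))$, asserts $T\simeq T'$, and compares Theorems \ref{thm_rel_class_number} and \ref{thm_proj_class_number}. Your write-up simply fills in the details the paper leaves implicit, namely the character-lattice isomorphism $\mathbb{Z}[G]/(N_G)\cong I_G$ behind $T\simeq T'$, the identification $L'=L$ in the cyclic case, and the final bookkeeping.
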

\begin{proof}
Since $L/K$ is cyclic, by Hasse's theorem $\ker(H^0_T(G,L^{*})\to H^0_T(G,I_L))=0$. In addition, $T\simeq T'$. Thus, the corollary follows by comparing the two formulas in Theorems $\ref{thm_rel_class_number}$ and $\ref{thm_proj_class_number}$.
\end{proof}
 \section{A Relative Class Number Formula for Isogenous Tori}
 Let $\lambda: T \to T'$ be an isogeny between two algebraic tori defined over a number field $K$. In other words, we have  exact sequences 
\begin{equation}\label{seq_shyr_1}
0 \to T'' \to T \to T' \to 0 \quad \& \quad  0 \to \hat{T}' \to \hat{T} \to \hat{T}'' \to 0
\end{equation}
where $T''$ is a finite algebraic group over $K$ and $\hat{T}''$ is a finite discrete $G_K$-module. Then $\lambda$ induces the following maps all of which have finite kernels and cokernels.
\begin{itemize}
\item $\lambda(O_{K,S}): T(O_{K,S}) \to T'(O_{K,S})$.
\item $\hat{\lambda}(H^n_{et}) : H^n_{et}(X,j_{*}\hat{T}') \to H^n_{et}(X,j_{*}\hat{T})$ and
$\hat{\lambda}(H^n_{c}) : H^n_{c}(X,j_{*}\hat{T}') \to H^n_{c}(X,j_{*}\hat{T})$.
\item Let $\alpha(T)=\mathrm{ker}\Psi^1(j_{*}\hat{T})$ and similarly for $\alpha(T')$. Then there is  a map $\hat{\lambda}(\alpha):\alpha(T')\to \alpha(T)$.
\item For finite prime $v$, let $c_v(T)=H^0(\hat{\mathbb{Z}},H^1(I_v,\hat{T}))^{D}$ and similarly for $c_v(T')$. We have
$\lambda(O_v) : T_v^c \to {T'}_v^c$ and $\hat{\lambda}(c_v):c_v(T)\to c_v(T')$.
\item For any prime $v$, $\lambda(H^n(K_v)) : H^n(K_v,T) \to H^n(K_v,T')$ and $\hat{\lambda}(H^n(K_v)) : H^n(K_v,\hat{T}') \to H^n(K_v,\hat{T})$.
\item For infinite prime $v$, $\lambda(H^n_T(K_v)) : H^n_T(K_v,T) \to H^n_T(K_v,T')$ and $\hat{\lambda}(H^n_T(K_v)) : H^n_T(K_v,\hat{T}') \to H^n_T(K_v,\hat{T})$.
\end{itemize}
Let $\alpha$ be a group homomorphism with finite kernel and cokernel. We define  $q(\alpha):=[\mathrm{cok}(\alpha)]/[\ker(\alpha)]$. Then $q(\alpha)$ is multiplicative with respect to exact sequences  $\cite[\mbox{0.3.1}]{Ono61}$. The purpose of this section is to generalize a formula of Shyr for $h_{T,S}/h_{T',S}$ using $\eqref{eqn_main1}$.
\begin{lemma}\label{lemma_qH0/H1_a}
\[
\frac{q(\hat{\lambda}(H^0(K_v))}{q(\hat{\lambda}(H^{-1}(K_v))}
=\left\{ \begin{array}{cl}
{[\mathrm{cok}(\hat{\lambda}(H^{0}(K_v))]} & \mbox{$v\notin S_{\infty}$, }\\
\frac{[\ker(\hat{\lambda}(H^{-1}(K_v))][\mathrm{cok}(\hat{\lambda}(H^{0}(K_v))]}{[H^0(K_v,\hat{T}'')]} & \mbox{$v\in S_{\infty}$. }
\end{array}
\right.
\]
\end{lemma}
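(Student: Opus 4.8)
The plan is to treat the two cases separately, exploiting that negative Tate cohomology of $G_{K_v}$ only carries information at the archimedean places. Throughout I abbreviate $a=\hat{\lambda}(H^0(K_v))$ and $b=\hat{\lambda}(H^{-1}(K_v))$; consistently with the bulleted definitions preceding the lemma, for $v\in S_{\infty}$ these are the maps on Tate cohomology of the finite group $G_{K_v}$, while for $v\notin S_{\infty}$ we have $H^{-1}(K_v,-)=0$, so that $b$ is the zero map between trivial groups and $q(b)=1$. By definition,
\[
\frac{q(a)}{q(b)}=\frac{[\mathrm{cok}(a)]\,[\ker(b)]}{[\ker(a)]\,[\mathrm{cok}(b)]},
\]
so in every case the task reduces to controlling $[\ker(a)]\,[\mathrm{cok}(b)]$.

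For a finite place $v$ the only thing to check is that $a$ is injective, for then $q(a)=[\mathrm{cok}(a)]/[\ker(a)]=[\mathrm{cok}(a)]$ and the first case follows (as $q(b)=1$). Injectivity is immediate: the second sequence in \eqref{seq_shyr_1} gives the injection $\hat{T}'\hookrightarrow\hat{T}$, and since taking $G_{K_v}$-invariants is left exact, the induced map $a\colon(\hat{T}')^{G_{K_v}}\to\hat{T}^{G_{K_v}}$ is injective, i.e. $[\ker(a)]=1$.

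For an archimedean place $v$ I would run the long exact sequence in Tate cohomology attached to $0\to\hat{T}'\to\hat{T}\to\hat{T}''\to0$ and isolate the stretch
\[
H^{-1}_T(K_v,\hat{T}')\xrightarrow{b}H^{-1}_T(K_v,\hat{T})\xrightarrow{f}H^{-1}_T(K_v,\hat{T}'')\xrightarrow{g}H^0_T(K_v,\hat{T}')\xrightarrow{a}H^0_T(K_v,\hat{T}).
\]
Exactness at $H^{-1}_T(K_v,\hat{T})$ gives $\ker(f)=\mathrm{im}(b)$, hence $[\mathrm{im}(f)]=[H^{-1}_T(K_v,\hat{T})]/[\mathrm{im}(b)]=[\mathrm{cok}(b)]$; exactness at $H^{-1}_T(K_v,\hat{T}'')$ gives $\ker(g)=\mathrm{im}(f)$; and exactness at $H^0_T(K_v,\hat{T}')$ gives $\mathrm{im}(g)=\ker(a)$. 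Counting orders through $g$ then yields
\[
[\ker(a)]\,[\mathrm{cok}(b)]=[\mathrm{im}(g)]\,[\ker(g)]=[H^{-1}_T(K_v,\hat{T}'')].
\]

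The final, and only structurally essential, step is to pass from $H^{-1}_T$ to $H^0_T$. Here I use that $\hat{T}''$ is finite: the Herbrand quotient of a finite module over the cyclic (or trivial) group $G_{K_v}$ equals $1$, so $[H^{-1}_T(K_v,\hat{T}'')]=[H^0_T(K_v,\hat{T}'')]$. Substituting $[\ker(a)]\,[\mathrm{cok}(b)]=[H^0_T(K_v,\hat{T}'')]$ into the displayed expression for $q(a)/q(b)$ gives exactly the second case. I expect the main obstacle to be purely bookkeeping: tracking the orders correctly through the five-term sequence and confirming that the Herbrand-quotient normalization matches precisely the degrees $-1$ and $0$ that occur. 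The complex places are a degenerate instance in which every group above is trivial and both sides of the claimed identity reduce to $1$.
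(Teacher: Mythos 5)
Your proof is correct and is essentially the paper's own argument, just written out in full: the finite-place case is the paper's ``clear'' step (vanishing of $H^{-1}$ plus left-exactness of invariants), and the archimedean case is exactly the paper's ``diagram chasing'' through the Tate-cohomology long exact sequence combined with the fact $[H^{-1}_T(K_v,\hat{T}'')]=[H^{0}_T(K_v,\hat{T}'')]$, which you justify via the Herbrand quotient.
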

\begin{proof}
If $v\notin S_{\infty}$ then this is clear.
If $v\in S_{\infty}$ then this follows from diagram chasing and the fact that
$[H^{-1}_T(K_v,\hat{T}'')]=[H^{0}_T(K_v,\hat{T}'')]$.
\end{proof}
 \begin{lemma}\label{lemma_qH0/H1}For any prime $v$, 
 \[ \frac{q(\lambda(H^1(K_v))}{q(\lambda(H^0(K_v))}
=\frac{|[\hat{T}'']|_v}{[\mathrm{cok}(\hat{\lambda}(H^0(K_v)))]} 
 .\]
 \end{lemma}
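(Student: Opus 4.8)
The plan is to read off both sides of the identity from the long exact cohomology sequences attached to the two short exact sequences in \eqref{seq_shyr_1}. Write $f_n=\lambda(H^n(K_v)):H^n(K_v,T)\to H^n(K_v,T')$, and let $\delta_n:H^n(K_v,T'')\to H^n(K_v,T)$ and $\partial_n:H^n(K_v,T')\to H^{n+1}(K_v,T'')$ be the remaining arrows of the long exact sequence coming from $0\to T''\to T\to T'\to 0$. Since $T''$ is finite, each $H^n(K_v,T'')$ is finite. Exactness at $H^n(K_v,T'')$ gives $\ker\delta_n=\mathrm{im}\,\partial_{n-1}\simeq\mathrm{cok}(f_{n-1})$ and $\mathrm{im}\,\delta_n=\ker(f_n)$, so for every $n$ there is a short exact sequence $0\to \mathrm{cok}(f_{n-1})\to H^n(K_v,T'')\to \ker(f_n)\to 0$, where the first term is trivial for $n=0$ because $\delta_0$ is injective. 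Passing to orders I record $[\ker f_0]=[H^0(K_v,T'')]$, $[\mathrm{cok} f_0]\,[\ker f_1]=[H^1(K_v,T'')]$ and $[\mathrm{cok} f_1]\,[\ker f_2]=[H^2(K_v,T'')]$.

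First I would expand the left-hand side. By definition $q(\lambda(H^1(K_v)))/q(\lambda(H^0(K_v)))=[\mathrm{cok} f_1]\,[\ker f_0]/([\mathrm{cok} f_0]\,[\ker f_1])$, and substituting the three identities above collapses this to
\[
\frac{q(\lambda(H^1(K_v)))}{q(\lambda(H^0(K_v)))}=\frac{[H^0(K_v,T'')]\,[H^2(K_v,T'')]}{[H^1(K_v,T'')]}\cdot\frac{1}{[\ker f_2]}.
\]
It then remains to identify the two factors on the right. For the factor $1/[\ker f_2]$ I would invoke Tate's local duality: under the perfect pairings $H^r(K_v,T)\times H^{2-r}(K_v,\hat T)\to \mathbb{Q}/\mathbb{Z}$ the isogeny $\lambda$ and its dual $\hat\lambda$ are adjoint, so $f_2$ is the Pontryagin dual of $\hat\lambda(H^0(K_v)):H^0(K_v,\hat T')\to H^0(K_v,\hat T)$. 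Since dualizing interchanges kernel and cokernel, $[\ker f_2]=[\mathrm{cok}(\hat\lambda(H^0(K_v)))]$, which is exactly the denominator in the asserted formula.

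For the remaining factor I would apply the local Euler--Poincaré characteristic formula to the finite module $T''$: the Euler characteristic $[H^0(K_v,T'')]\,[H^2(K_v,T'')]/[H^1(K_v,T'')]$ equals the normalized local factor of the order of $T''$, and since $\hat T''$ is the Cartier dual of $T''$ the groups $T''(\bar K)$ and $\hat T''$ have the same order, giving the numerator $|[\hat T'']|_v$. Combining the two identifications yields the lemma for non-archimedean $v$. The step requiring the most care is the archimedean one: there the relevant $H^n(K_v,-)$ must be read as Tate cohomology, the short exact sequences of the first paragraph have to be re-derived in the Tate complex (where $\delta_0$ is no longer injective, which is precisely the correction already isolated in Lemma \ref{lemma_qH0/H1_a}), and the Euler characteristic together with the convention for $|[\hat T'']|_v$ at real and complex places must be matched up. I expect this archimedean bookkeeping to be the main obstacle, the non-archimedean case being otherwise routine.
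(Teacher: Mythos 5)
Your non-archimedean argument is correct, and it is in substance the paper's own proof with slightly different bookkeeping. The paper also starts from the long exact sequence of $0\to T''\to T\to T'\to 0$, but stops at degree $1$: it replaces $[\mathrm{cok}(\lambda(H^1(K_v)))]$ by $[\ker(\hat\lambda(H^1(K_v)))]$ using local duality in degree $1$, converts $[H^n(K_v,T'')]$ into $[H^{2-n}(K_v,\hat T'')]$ by duality for finite modules, applies the Euler--Poincar\'e formula to $\hat T''$ rather than to $T''$, and finally uses the sequence $0\to \mathrm{cok}(\hat\lambda(H^0(K_v)))\to H^0(K_v,\hat T'')\to \ker(\hat\lambda(H^1(K_v)))\to 0$ to produce the denominator. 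Your variant --- push the sequence to degree $2$, use the single duality $H^2(K_v,T)\simeq H^0(K_v,\hat T)^D$ (harmless despite the profinite completion implicit there, since $\mathrm{cok}(\hat\lambda(H^0(K_v)))$ is finite), and apply the Euler characteristic to $T''$ itself via $[T'']=[\hat T'']$ --- is an equivalent rearrangement of the same three ingredients: long exact sequence, local duality, Euler characteristic.

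The one genuine gap is the archimedean case, which you flag but do not carry out; you should know that the paper does not carry it out either, and its proof as written is only valid for finite $v$. The two facts it cites are nonarchimedean: \cite[I.2.8]{Mil06} is the Euler characteristic formula over a $p$-adic field, and at archimedean places $[H^0(K_v,M)][H^2(K_v,M)]/[H^1(K_v,M)]\neq |[M]|_v$ in general (at a complex place the left side is $[M]$ for ordinary cohomology and $1$ for Tate cohomology, while $|[M]|_v=[M]^2$; at a real place take $M=\mu_3$, where the left side is $1$ and the right side is $3$). Likewise the dualities you and the paper invoke pair \emph{Tate} cohomology groups at real places, so $[H^0(K_v,T'')]=[H^2(K_v,\hat T'')]$ and your $[\ker(\lambda(H^2(K_v)))]=[\mathrm{cok}(\hat\lambda(H^0(K_v)))]$ both fail for ordinary $H^0$. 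The lemma itself does remain true at archimedean $v$ with ordinary cohomology, because the errors in the two steps cancel: at a complex place both sides equal $[\hat T'']$ by direct computation, and at a real place the degree-$1$ duality plus the dual sequence reduce the claim to the identity $[H^0(G_{K_v},M)][H^0(G_{K_v},M^{*})]=[M][H^1(G_{K_v},M)]$ for finite $G_{K_v}$-modules $M$ with Cartier dual $M^{*}$, which can be checked using periodicity, the triviality of the Herbrand quotient of a finite module, and the observation that $[N_{G}M^{*}]=[(\sigma-1)M]$. But this verification is a separate piece of work that neither your proposal nor the published proof supplies, so your instinct that the archimedean bookkeeping is the real content of the words ``for any prime'' is exactly right.
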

 \begin{proof}
 From the exact sequence
 \[ 0 \to H^0(K_v,T'')\to H^0(K_v,T)\to H^0(K_v,T') \to H^1(K_v,T'')\to 
 \ker({\lambda}(H^{1}(K_v)) \to 0\]
 and the facts that $[\ker({\lambda}(H^{1}(K_v))] =[\mathrm{cok}(\hat{\lambda}(H^{1}(K_v))]$ and $[H^n(K_v,T'')]= [H^{2-n}(K_v,\hat{T}'')]$, we obtain
 \[ \frac{q(\lambda(H^1(K_v))}{q(\lambda(H^0(K_v))}=
\frac{[H^2(K_v,\hat{T}'')][\ker(\hat{\lambda}(H^{1}(K_v))]}{[H^1(K_v,\hat{T}'')]}
=\frac{|[\hat{T''}]|_v[\ker(\hat{\lambda}(H^{1}(K_v))]}{[H^0(K_v,\hat{T}'')]}=\frac{|[\hat{T}'']|_v}{[\mathrm{cok}(\hat{\lambda}(H^0(K_v)))]}.
\]
where the second equality follows from $\cite[\mbox{I.2.8}]{Mil06}$.
 \end{proof}
 
 \begin{lemma}\label{lemma_shyr_local}
Let $v$ be a finite prime of $K$. Then
\begin{equation}\label{seq_shyr_local}
q(\lambda(c_v))=q(\lambda(O_v))|[\hat{T}'']|_v .
\end{equation} 
\end{lemma}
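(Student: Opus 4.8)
The plan is to transport the isogeny $\lambda$ through the local duality sequence \eqref{local_duality_seq_2} for $T$ and $T'$ and reduce \eqref{seq_shyr_local} to a module identity over $\hat{\mathbb{Z}}$. First I would split \eqref{local_duality_seq_2} into \eqref{value_galois_seq9} and \eqref{value_galois_seq10} for both $T$ and $T'$; since $\lambda$ induces a morphism between the two systems in which every vertical arrow has finite kernel and cokernel, the multiplicativity of $q$ on short exact sequences $\cite[\mbox{0.3.1}]{Ono61}$ applies. Here $\lambda(c_v)$ denotes the induced map $c_v(T)\to c_v(T')$, which is the middle arrow in the split $0\to S_v\to c_v(T)\to B_v\to 0$ of \eqref{value_galois_seq10}. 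Writing $g_0$ for the induced map on $T(K_v)/T_v^c$, $f_1$ on $Hom_{\hat{\mathbb{Z}}}(\hat{T}^{I_v},\mathbb{Z})$, and $f_4$ on $Ext^1_{\hat{\mathbb{Z}}}(\hat{T}^{I_v},\mathbb{Z})$, applying $q$ to \eqref{value_galois_seq9}, to the two halves of \eqref{value_galois_seq10}, and to $0\to T_v^c\to T(K_v)\to T(K_v)/T_v^c\to 0$ gives
\[
q(\lambda(c_v)) = \frac{q(f_1)}{q(g_0)}\cdot\frac{q(\lambda(H^1(K_v)))}{q(f_4)}, \qquad q(g_0)=\frac{q(\lambda(H^0(K_v)))}{q(\lambda(O_v))}.
\]

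Combining these and applying Lemma \ref{lemma_qH0/H1} to rewrite $q(\lambda(H^1(K_v)))/q(\lambda(H^0(K_v)))$ as $|[\hat{T}'']|_v/[\mathrm{cok}(\hat{\lambda}(H^0(K_v)))]$, the claim \eqref{seq_shyr_local} reduces to the single residual identity
\[
\frac{q(f_1)}{q(f_4)} = [\mathrm{cok}(\hat{\lambda}(H^0(K_v)))].
\]
The content here is that $f_1,f_4$ are maps of $Ext$-groups attached to the injection $\mu\colon(\hat{T}')^{I_v}\to\hat{T}^{I_v}$ induced by $\hat{\lambda}$, while $\hat{\lambda}(H^0(K_v))$ is a map of $G_{K_v}$-invariants; so I must translate Ext-data into $\hat{\mathbb{Z}}$-cohomology.

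To prove the residual identity I would put $C=\mathrm{cok}(\mu)$, which is finite because taking $I_v$-invariants of \eqref{seq_shyr_1} embeds $C$ into $(\hat{T}'')^{I_v}$. Applying $Ext^{\bullet}_{\hat{\mathbb{Z}}}(-,\mathbb{Z})$ to $0\to(\hat{T}')^{I_v}\xrightarrow{\mu}\hat{T}^{I_v}\to C\to 0$ and using $Hom_{\hat{\mathbb{Z}}}(C,\mathbb{Z})=0$, a short diagram chase yields $[\mathrm{cok}\,f_1][\ker f_4]=[Ext^1_{\hat{\mathbb{Z}}}(C,\mathbb{Z})]$ and $[\mathrm{cok}\,f_4]=[\ker(Ext^2_{\hat{\mathbb{Z}}}(C,\mathbb{Z})\to Ext^2_{\hat{\mathbb{Z}}}(\hat{T}^{I_v},\mathbb{Z}))]$, hence $q(f_1)/q(f_4)=[Ext^1_{\hat{\mathbb{Z}}}(C,\mathbb{Z})]/[\mathrm{cok}\,f_4]$. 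By the duality $Ext^r_{\hat{\mathbb{Z}}}(M,\mathbb{Z})\simeq H^{2-r}(\hat{\mathbb{Z}},M)^D$ of $\cite[\mbox{I.1.10}]{Mil06}$, used naturally, that connecting map on $Ext^2$ is dual to the cohomology map $H^0(\hat{\mathbb{Z}},\mu)$; since $H^0(K_v,\hat{T})=(\hat{T}^{I_v})^{\hat{\mathbb{Z}}}$ we have $\hat{\lambda}(H^0(K_v))=H^0(\hat{\mathbb{Z}},\mu)$, so the $\hat{\mathbb{Z}}$-cohomology sequence of $\mu$ gives $[\mathrm{cok}\,f_4]=[H^0(\hat{\mathbb{Z}},C)]/[\mathrm{cok}(\hat{\lambda}(H^0(K_v)))]$. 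The same duality for the finite module $C$ gives $[Ext^1_{\hat{\mathbb{Z}}}(C,\mathbb{Z})]=[H^1(\hat{\mathbb{Z}},C)]=[H^0(\hat{\mathbb{Z}},C)]$, the last step by $\cite[\mbox{page 32}]{Mil06}$. Substituting these two expressions cancels $[H^0(\hat{\mathbb{Z}},C)]$ and leaves $q(f_1)/q(f_4)=[\mathrm{cok}(\hat{\lambda}(H^0(K_v)))]$, as required.

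I expect the main obstacle to be this last paragraph: tracking the directions of the isogeny-induced arrows, and checking that $\cite[\mbox{I.1.10}]{Mil06}$ is natural enough to identify the $Ext^2$ connecting map with $H^0(\hat{\mathbb{Z}},\mu)$. The arithmetic factor $|[\hat{T}'']|_v$ enters only through Lemma \ref{lemma_qH0/H1}; finiteness of every relevant group (so that each $q$ is defined) follows from the isogeny having finite kernel and cokernel, together with the finiteness of $Ext^1_{\hat{\mathbb{Z}}}(\hat{T}^{I_v},\mathbb{Z})$ and of $C$.
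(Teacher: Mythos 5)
Your proof is correct and is essentially the paper's own argument: the same ladder built from \eqref{local_duality_seq_2}, the same appeal to $q$-multiplicativity and to Lemma \ref{lemma_qH0/H1}, and the same residual identity $q(\lambda(Hom))/q(\lambda(Ext))=[\mathrm{cok}(\hat{\lambda}(H^0(K_v)))]$ established from the $Ext_{\hat{\mathbb{Z}}}(-,\mathbb{Z})$-sequence of $0\to\hat{T}'^{I_v}\to\hat{T}^{I_v}\to C\to 0$ together with Milne's duality $Ext^r_{\hat{\mathbb{Z}}}(M,\mathbb{Z})\simeq H^{2-r}(\hat{\mathbb{Z}},M)^D$ and $[H^0(\hat{\mathbb{Z}},C)]=[H^1(\hat{\mathbb{Z}},C)]$. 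The only differences are bookkeeping: you split the six-term sequence into short exact pieces rather than applying multiplicativity to the whole ladder at once, and you evaluate $[\mathrm{cok}(f_4)]$ through $H^0(\hat{\mathbb{Z}},C)$ where the paper instead isolates $[\ker(\lambda(Ext^2))]$ and uses $[Ext^1_{\hat{\mathbb{Z}}}(C,\mathbb{Z})]=[Ext^2_{\hat{\mathbb{Z}}}(C,\mathbb{Z})]$ --- the same ingredients arranged slightly differently.
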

\begin{proof}
From Corollary $\ref{local_duality_examp}$, we have the following commutative diagram 
\begin{equation}
\xymatrix{
{T_v^c}\ar[r] \ar[d]^{\lambda(O_v)} & {T(K_v)}\ar[r] \ar[d]^{\lambda(H^0(K_v))} & Hom_{\hat{\mathbb{Z}}}(\hat{T}^{I_v},\mathbb{Z}) \ar[r] \ar[d]^{\lambda(Hom)}
& H^0(\hat{\mathbb{Z}},H^1(I_v,\hat{T}))^D \ar[r] \ar[d]^{\hat{\lambda}(c_v)} & H^1(K_v,T)\ar[r]\ar[d]^{\lambda(H^1(K_v))} & Ext^{1}_{\hat{\mathbb{Z}}}(\hat{T}^{I_v},\mathbb{Z}) \ar[d]^{\lambda(Ext)} \\
{{T'}_v^c}\ar[r] & {T'(K_v)}\ar[r] & Hom_{\hat{\mathbb{Z}}}(\hat{T}'^{I_v},\mathbb{Z}) \ar[r]
& H^0(\hat{\mathbb{Z}},H^1(I_v,\hat{T}'))^D \ar[r] & H^1(K_v,T')\ar[r] & Ext^{1}_{\hat{\mathbb{Z}}}(\hat{T}'^{I_v},\mathbb{Z})}
\end{equation}
Note that all the vertical maps have finite kernels and cokernels. We have 
\begin{equation}\label{seq_shyr_local_0}
\frac{q(\hat{\lambda}(c_v))}{q(\lambda(O_v))}=\frac{q(\lambda(Hom))q(\lambda(H^1(K_v)))}{q(\lambda(Ext))q(\lambda(H^0(K_v)))}. 
\end{equation}
From ($\ref{seq_shyr_1}$), we have an exact sequence of $\hat{\mathbb{Z}}$-modules
\begin{equation*}
0 \to \hat{T}'^{I_v} \to \hat{T}^{I_v} \to P \to 0
\end{equation*}
where $P$ is finite.
As $P$ is finite, $[ Ext^n_{\hat{\mathbb{Z}}}(P,\mathbb{Z})]=[H^{2-n}(\hat{\mathbb{Z}},P)]$. In particular, $[ Ext^1_{\hat{\mathbb{Z}}}(P,\mathbb{Z})]=[Ext^2_{\hat{\mathbb{Z}}}(P,\mathbb{Z})]$.  Moreover, 
$Ext^2_{\hat{\mathbb{Z}}}(\hat{T}^{I_v},\mathbb{Z})\simeq H^0(\hat{\mathbb{Z}},\hat{T}^{I_v})^D \simeq H^0(K_v,\hat{T})^D$. Thus,
$ [\ker(\lambda(Ext^2))]=[\mathrm{cok}(\hat{\lambda}(H^0(K_v)))]$. By direct calculation,
\begin{equation}\label{seq_shyr_local_5}
\frac{q(\lambda(Hom))}{q(\lambda(Ext))}=\frac{[Ext^1_{\hat{\mathbb{Z}}}(P,\mathbb{Z})][\ker(\lambda(Ext^2))]}
{[Ext^2_{\hat{\mathbb{Z}}}(P,\mathbb{Z})]}=[\mathrm{cok}(\hat{\lambda}(H^0(K_v)))].
\end{equation}
Now put together ($\ref{seq_shyr_local_0}$),($\ref{seq_shyr_local_5}$) and Lemma $\ref{lemma_qH0/H1}$, we obtain ($\ref{seq_shyr_local}$). 
\end{proof}

 \begin{thm}[\cite{Shy77}]\label{thm_shyr}
  Let $\lambda: T \to T'$ be an isogeny between two algebraic tori defined over a number field $K$. Let $\tau(T)$ and $\tau(T')$ be the Tamagawa numbers of $T$ and $T'$ respectively. Then 
  \[ \frac{h_{T,S}}{h_{T',S}}=
  \frac{\tau(T)\prod_{v\in S}q(\lambda(H^0(K_v)))\prod_{v\notin S}q(\lambda(O_v))}{\tau(T')q(\lambda(O_{K,S}))q(\hat{\lambda}(H^0_{et}))}.\]
 \end{thm}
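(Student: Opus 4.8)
The plan is to apply Theorem \ref{thm_hTS} to both $T$ and $T'$ and divide. Writing $m=[\hat{T}'']$ and forming the quotient of the two instances of \eqref{eqn_main1}, each factor becomes a ratio of orders of the groups attached to $T$ and to $T'$. For a homomorphism $f$ of finite groups one has $q(f)=[\mathrm{cok}(f)]/[\ker(f)]=[\text{target}]/[\text{source}]$, so every such ratio of orders is the value $q$ of the homomorphism that $\lambda$ (or $\hat\lambda$) induces between the corresponding groups, all of which have finite kernel and cokernel since $\hat{T}''$ is finite. First I would dispose of the global factor $[H^1(K,\hat{T})]/[\mathbb{III}^1(T)]$ by invoking Ono's theorem $\tau(T)=[H^1(K,\hat{T})]/[\mathbb{III}^1(T)]$, which replaces this quotient by $\tau(T)/\tau(T')$.

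Next I would treat the local contributions to the quotient of \eqref{eqn_main1}. The factor coming from the places $v\notin S$ is $\prod_{v\notin S}[H^0(\hat{\mathbb{Z}},H^1(I_v,\hat{T}'))]/[H^0(\hat{\mathbb{Z}},H^1(I_v,\hat{T}))]=\prod_{v\notin S}q(\hat\lambda(c_v))$ (using $c_v(T)=H^0(\hat{\mathbb{Z}},H^1(I_v,\hat{T}))^{D}$, which has the same order), and by Lemma \ref{lemma_shyr_local} this equals $\prod_{v\notin S}q(\lambda(O_v))\,|m|_v$. The factor coming from $v\in S$ is $\prod_{v\in S}q(\lambda(H^1(K_v)))$, which by Lemma \ref{lemma_qH0/H1} equals $\prod_{v\in S}q(\lambda(H^0(K_v)))\,|m|_v\,[\mathrm{cok}(\hat\lambda(H^0(K_v)))]^{-1}$. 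Multiplying the two, the scattered factors $|m|_v$ assemble into $\prod_{\text{all }v}|m|_v=1$ by the product formula, leaving exactly the desired $\prod_{v\in S}q(\lambda(H^0(K_v)))\prod_{v\notin S}q(\lambda(O_v))$ together with an extra denominator $\prod_{v\in S}[\mathrm{cok}(\hat\lambda(H^0(K_v)))]$.

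Collecting these reductions, the theorem becomes equivalent to the single global identity
\[ q(\lambda(O_{K,S}))\,q(\hat\lambda(H^0_{et}))=\frac{[Ext^1_U(j_{*}\hat{T}',\mathbb{G}_m)]\,[\ker\Psi^1(j_{*}\hat{T})]}{[Ext^1_U(j_{*}\hat{T},\mathbb{G}_m)]\,[\ker\Psi^1(j_{*}\hat{T}')]}\prod_{v\in S}[\mathrm{cok}(\hat\lambda(H^0(K_v)))], \]
where $q(\lambda(O_{K,S}))$ is read off $Hom_U(j_{*}\hat{T},\mathbb{G}_m)$ via Corollary \ref{cor_Hom_UT} and $q(\hat\lambda(H^0_{et}))$ is the $q$ of the induced map on $H^0_{et}(U,j_{*}\hat{T})=H^0(K,\hat{T})$. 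To prove it I would feed $\hat\lambda:\hat{T}'\to\hat{T}$ through the compact-support sequence \eqref{seq_compact_coh} for $j_{*}\hat{T}$, producing a commutative ladder all of whose vertical maps have finite kernel and cokernel, and apply the multiplicativity of $q$ along the ladder. Artin--Verdier duality (Theorem \ref{Artin-Verdier}) matches $Ext^1_U(j_{*}\hat{T},\mathbb{G}_m)$ with $H^2_c(U,j_{*}\hat{T})$ and, after profinite completion, $Hom_U(j_{*}\hat{T},\mathbb{G}_m)$ with $H^3_c(U,j_{*}\hat{T})$, while $H^0(K,\hat{T})$ and $\ker\Psi^1(j_{*}\hat{T})$ enter through the segment \eqref{seq_2nd_lemma_1} of the same sequence.

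I expect this last identity to be the main obstacle. Two points demand care. Since $T(O_{K,S})$, $H^0(K,\hat{T})$ and the character groups are only finitely generated, I must verify that the duality isomorphisms of Theorem \ref{Artin-Verdier} are compatible with the maps induced by $\hat\lambda$ before passing to $q$, handling the profinite completions with the strict-morphism argument used in Proposition \ref{local_duality_fg}. More seriously, the sequence \eqref{seq_compact_coh} uses Tate cohomology at the archimedean places, so its degree-zero local terms are the modified groups $H^0_T(K_v,\hat{T})$ rather than the ordinary $H^0(K_v,\hat{T})$ recorded in $[\mathrm{cok}(\hat\lambda(H^0(K_v)))]$. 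Reconciling the two is exactly the role of Lemma \ref{lemma_qH0/H1_a}, and fitting its archimedean correction factor $[H^0(K_v,\hat{T}'')]$ together with the $q$ of $\hat\lambda$ on $H^{-1}_T(K_v,\hat{T})$ is the delicate bookkeeping that makes the identity come out.
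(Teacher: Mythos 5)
Your reduction is exactly the paper's: dividing the two instances of Theorem \ref{thm_hTS}, replacing $[H^1(K,\hat{T})]/[\mathbb{III}^1(T)]$ by $\tau(T)$ via Ono's theorem, and combining Lemmas \ref{lemma_qH0/H1} and \ref{lemma_shyr_local} with the product formula $\prod_{v}|[\hat{T}'']|_v=1$ reproduces \eqref{seq_shyr_0} and \eqref{seq_shyr_9}, and the ``single global identity'' you isolate is precisely the paper's \eqref{seq_shyr_8} rearranged. Your two cautionary points (compatibility of the duality isomorphisms with $\hat{\lambda}$ and completions, and the archimedean Tate-cohomology corrections handled by Lemma \ref{lemma_qH0/H1_a}) are also on target. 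Up to this point the proposal is correct and matches the paper.

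The gap is in your proposed proof of that remaining identity. Dualizing $Ext^1_U(j_{*}\hat{T},\mathbb{G}_m)$ into $H^2_c(U,j_{*}\hat{T})$ and $Hom_U(j_{*}\hat{T},\mathbb{G}_m)$ into $H^3_c(U,j_{*}\hat{T})$ and then ``applying multiplicativity of $q$ along the ladder'' cannot close the argument as stated: the segment \eqref{seq_2nd_lemma_1} stops at $H^1_c$ and $\ker\Psi^1$, and the only way the compact-support sequence connects $H^2_c$ and $H^3_c$ to those low-degree terms is through the middle and tail of the ladder, i.e.\ through $H^1_{et}(U,j_{*}\hat{T})$, $\prod_{v\in S}H^1(K_v,\hat{T})$, $H^2_{et}(U,j_{*}\hat{T})$, $\prod_{v\in S}H^2(K_v,\hat{T})$, $H^3_{et}$, and the terms beyond $H^3_c$. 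Multiplicativity of $q$ then yields relations involving $q(\hat{\lambda}(H^1_{et}))$, $q(\hat{\lambda}(H^2_{et}))$, $q(\hat{\lambda}(H^3_{et}))$ and tail contributions, none of which appear in the identity you want; and since $q$-multiplicativity is a purely formal consequence of exactness, it cannot by itself evaluate these unknowns. The missing ingredient is a genuine arithmetic input: the paper supplies it by forming the constructible cokernel sheaf $\mathcal{Q}$ in $0\to j_{*}\hat{T}'\to j_{*}\hat{T}\to\mathcal{Q}\to 0$ (sequence \eqref{seq_shyr_2a}), running the long exact sequence of $Ext$-groups \eqref{seq_shyr_3a}--\eqref{seq_shyr_3b} --- which contains $T(O_{K,S})\simeq Hom_U(j_{*}\hat{T},\mathbb{G}_m)$ and the two $Ext^1$'s directly, with Artin--Verdier duality used only to identify the end term $S_2^D\simeq \mathrm{cok}(\hat{\lambda}(H^1_c))$ --- and then invoking the Euler-characteristic formula $\cite[\mbox{II.2.13}]{Mil06}$ for $\mathcal{Q}$, whose value $\prod_{v\in S_{\infty}}[H^0(K_v,\hat{T}'')]$ is exactly what cancels the archimedean corrections produced by Lemma \ref{lemma_qH0/H1_a}. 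Without $\cite[\mbox{II.2.13}]{Mil06}$ or an equivalent global Euler-characteristic statement your ladder-plus-duality plan leaves the identity unproven; once you introduce $\mathcal{Q}$ and that formula, you will have reconstructed the paper's own argument.
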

\begin{proof}
By Theorem $\ref{thm_hTS}$ and the fact that $\tau(T)=[H^1(K,\hat{T})]/[\mathbb{III}^1(T)]$ (see \cite{Ono63}),
\begin{equation}\label{seq_shyr_0}
\frac{h_{T,S}}{h_{T',S}}=
\frac{\tau(T)}{\tau(T')}\frac{[Ext^1_U(j_{*}\hat{T},\mathbb{G}_m)]}
{[Ext^1_U(j_{*}\hat{T}',\mathbb{G}_m)]}
\frac{\prod_{v\notin S}{q(\hat{\lambda}(c_v))}
\prod_{v\in S}{q({\lambda}(H^1(K_v)))}}
{q(\hat{\lambda}(\alpha))}.
\end{equation}
Combining Lemmas $\ref{lemma_qH0/H1}$ and $\ref{lemma_shyr_local}$, 
\begin{equation}\label{seq_shyr_9}
\prod_{v\notin S}{q(\hat{\lambda}(c_v))}\prod_{v\in S}{q({\lambda}(H^1(K_v)))}
=\prod_{v\notin S}{q({\lambda}(O_v))}
\prod_{v\in S}\frac{q({\lambda}(H^0(K_v)))}{{[\mathrm{cok}(\hat{\lambda}(H^0(K_v)))]}}.
\end{equation}
 Sequence ($\ref{seq_shyr_1}$) induces an exact sequence of sheaves 
\[ 0 \to j_{*}\hat{T}' \to j_{*}\hat{T} \to j_{*}\hat{T}'' \to \mathcal{R} \to 0\]
where $\mathcal{R}$ is negligible as it is a subsheaf of $R^1j_{*}\hat{T}'$. We can split this sequence into 
\begin{equation}\label{seq_shyr_2a}
 0 \to j_{*}\hat{T}' \to j_{*}\hat{T} \to \mathcal{Q} \to 0, 
\end{equation}
\begin{equation}\label{seq_shyr_2b}
0\to \mathcal{Q} \to j_{*}\hat{T}'' \to \mathcal{R} \to 0
\end{equation}
where $\mathcal{Q}$ is a constructible sheaf. The long exact sequence of $\mathrm{Ext}$-groups associated to ($\ref{seq_shyr_2a}$) can be split into
\begin{equation}\label{seq_shyr_3a}
0 \to Hom_U(\mathcal{Q},\mathbb{G}_m) \to  Hom_U(j_{*}\hat{T},\mathbb{G}_m) 
\xrightarrow{\lambda(O_{K,S})}  
Hom_U(j_{*}\hat{T}',\mathbb{G}_m) \to S_1 \to 0,
\end{equation}
\begin{equation}\label{seq_shyr_3b}
0 \to S_1 \to Ext^1_U(\mathcal{Q},\mathbb{G}_m) \to  Ext^1_U(j_{*}\hat{T},\mathbb{G}_m) \to  
  Ext^1_U(j_{*}\hat{T}',\mathbb{G}_m)\to Ext^2_U(\mathcal{Q},\mathbb{G}_m) \to S_2 \to 0.
\end{equation}
The Artin-Verdier duality implies $S_2^D\simeq \mathrm{cok}(\hat{\lambda}(H^1_c))$.
As $Hom_U(j_{*}\hat{T},\mathbb{G}_m)\simeq T(O_{K,S})$, ($\ref{seq_shyr_3a}$) yields
\begin{equation}\label{seq_shyr_4}
q(\lambda(O_{K,S}))=\frac{[S_1]}{[Hom_U(\mathcal{Q},\mathbb{G}_m)]}.
\end{equation}
Combining ($\ref{seq_shyr_3b}$) and ($\ref{seq_shyr_4}$) gives us
\begin{equation}
\frac{[Ext^1_U(j_{*}\hat{T},\mathbb{G}_m)]}
{[Ext^1_U(j_{*}\hat{T}',\mathbb{G}_m)]}=
\frac{[Ext^1_U(\mathcal{Q},\mathbb{G}_m)]
[\mathrm{cok}(\hat{\lambda}(H^1_c))]}
{[Ext^2_U(\mathcal{Q},\mathbb{G}_m)][Hom_U(\mathcal{Q},\mathbb{G}_m)] q(\lambda(O_{K,S}))}.
\end{equation}
From $\cite[\mbox{II.2.13}]{Mil06}$ and Artin-Verdier Duality, we deduce 
\[ \frac{[Ext^1_U(\mathcal{Q},\mathbb{G}_m)][H^0_{c}(U,\mathcal{Q})]}
{[Ext^2_U(\mathcal{Q},\mathbb{G}_m)][Hom_U(\mathcal{Q},\mathbb{G}_m)]} 
= \prod_{v\in S_{\infty}}[H^0(K_v,\mathcal{Q}_v)]
=\prod_{v\in S_{\infty}}[H^0(K_v,\hat{T}'')].\]
 Therefore,
\begin{equation}\label{seq_shyr_5}
\frac{[Ext^1_U(j_{*}\hat{T},\mathbb{G}_m)]}
{[Ext^1_U(j_{*}\hat{T}',\mathbb{G}_m)]}=
\frac{\prod_{v\in S_{\infty}}[H^0(K_v,\hat{T}'')]
[\mathrm{cok}(\hat{\lambda}(H^1_c))]}
{ [H^0_{c}(U,\mathcal{Q})]q(\lambda(O_{K,S}))}.
\end{equation}
By diagram chasing, we can see that
\begin{equation*}
\frac{q(\hat{\lambda}(H^1_c))}{q(\hat{\lambda}(H^0_c))}
=\frac{[\mathrm{cok}(\hat{\lambda}(H^1_c))][\ker(\hat{\lambda}(H^0_c))]}
{[H^0_{c}(U,\mathcal{Q})]}.
\end{equation*}
Thus, ($\ref{seq_shyr_5}$) can be rewritten as
\begin{equation}\label{seq_shyr_6}
\frac{[Ext^1_U(j_{*}\hat{T},\mathbb{G}_m)]}
{[Ext^1_U(j_{*}\hat{T}',\mathbb{G}_m)]}=
\frac{\prod_{v\in S_{\infty}}[H^0(K_v,\hat{T}'')]}
{ [\ker(\hat{\lambda}(H^0_c))]q(\lambda(O_{K,S}))}\frac{q(\hat{\lambda}(H^1_c))}{q(\hat{\lambda}(H^0_c))}.
\end{equation}
We have the following commutative diagram
 \begin{equation*}
\xymatrix{
 \underset{v \in S}{\prod}H^{-1}_T(K_v,\hat{T}') \ar[d]^{\hat{\lambda}(H^{-1}_{S})}\ar[r] & H^0_c(U,j_{*}\hat{T}') \ar[r] \ar[d]^{\hat{\lambda}(H^0_{c})} & H^0_{et}(U,j_{*}\hat{T}') \ar[r] \ar[d]^{\hat{\lambda}(H^0_{et})} 
& \underset{v \in S}{\prod}H^0(K_v,\hat{T}')\ar[r] \ar[d]^{\hat{\lambda}(H^{0}_{S})} 
& H^1_c(U,j_{*}\hat{T}') \ar[r] \ar[d]^{\hat{\lambda}(H^1_{c})}
 & 
\alpha({T}') \ar[d]^{\hat{\lambda}(\alpha)}  \\
 \underset{v \in S}{\prod}H^{-1}_T(K_v,\hat{T})  \ar[r] & H^0_c(U,j_{*}\hat{T}) \ar[r]  & H^0_{et}(U,j_{*}\hat{T}) \ar[r]  
& \underset{v \in S}{\prod}H^0(K_v,\hat{T})\ar[r]   & 
H^1_c(U,j_{*}\hat{T}) \ar[r] 
 & \alpha({T})
}
\end{equation*}
As $\hat{\lambda}(H^0_{et})$ is injective, 
$ \ker(\hat{\lambda}(H^0_{c}))\simeq \ker(\hat{\lambda}(H^{-1}_{S}))\simeq
\prod_{v\in S_{\infty}}\ker(\hat{\lambda}(H^{-1}(K_v))
$. Furthermore, 
\begin{eqnarray}\label{seq_shyr_7}
 \frac{q(\hat{\lambda}(H^1_{c}))}{q(\hat{\lambda}(H^0_{c}))}
=\frac{q(\hat{\lambda}(\alpha))q(\hat{\lambda}(H^{0}_{S}))}
{q(\hat{\lambda}(H^0_{et}))q(\hat{\lambda}(H^{-1}_{S}))}
=\frac{q(\hat{\lambda}(\alpha))}
{q(\hat{\lambda}(H^0_{et}))}
\prod_{v\in S_{\infty}} \frac{q(\hat{\lambda}(H^0(K_v))}{q(\hat{\lambda}(H^{-1}(K_v))}
\prod_{v\in S-S_{\infty}} \frac{q(\hat{\lambda}(H^0(K_v))}{q(\hat{\lambda}(H^{-1}(K_v))}.
\end{eqnarray}
Combining ($\ref{seq_shyr_6}$), ($\ref{seq_shyr_7}$) with Lemma $\ref{lemma_qH0/H1_a}$ yields
\begin{equation}\label{seq_shyr_8}
\frac{[Ext^1_U(j_{*}\hat{T},\mathbb{G}_m)]}
{[Ext^1_U(j_{*}\hat{T}',\mathbb{G}_m)]}=
\frac{q(\hat{\lambda}(\alpha))}{q(\lambda(O_{K,S}))q(\hat{\lambda}(H^0_{et}))}
\prod_{v\in S}[\mathrm{cok}(\hat{\lambda}(H^{0}(K_v))].
\end{equation}
Now Theorem $\ref{thm_shyr}$ follows from $(\ref{seq_shyr_0})$, $(\ref{seq_shyr_9})$ and $(\ref{seq_shyr_8})$. 
\end{proof}

%\bibliographystyle{amsalpha}
%\bibliography{ref}

\bigskip
\begin{align*}
& \large \mbox{Department of Mathematics, University of Regensburg, 93040 Regensburg, Germany.} \\
& \large \mbox{Email : minh-hoang.tran@mathematik.uni-regensburg.de}
\end{align*}

	\end{document}